\definecolor{green}{RGB}{20,140,10}
\numberwithin{equation}{section}
\mathchardef\mhyphen="2D
\theoremstyle{plain}
\newtheorem{theorem}{Theorem}[section]
\newtheorem*{theorem*}{Theorem}
\newtheorem{lemma}[theorem]{Lemma}
\newtheorem{proposition}[theorem]{Proposition}
\newtheorem{hypothesis}[theorem]{Hypothesis}
\newtheorem{corollary}[theorem]{Corollary}
\theoremstyle{definition}
\newtheorem{definition}[theorem]{Definition}
\newtheorem{example}[theorem]{Example}
\newtheorem{remark}[theorem]{Remark}
\newtheorem{question}[theorem]{Question}
\let\c@equation\c@theorem  
\newcommand{\bfl}{\mathfrak l}
\DeclareMathOperator{\ged}{ged}
\DeclareMathOperator{\gldim}{gldim}
\DeclareMathOperator{\Ext}{Ext} 
\DeclareMathOperator{\Tor}{Tor}
\DeclareMathOperator{\R}{R}
\DeclareMathOperator{\pdim}{pdim}
 \DeclareMathOperator{\cd}{cd}
\DeclareMathOperator{\lcd}{lcd} 
 \DeclareMathOperator{\Gr}{\mhyphen Gr}
\DeclareMathOperator{\rate}{rate}
\DeclareMathOperator{\slope}{slope}
\DeclareMathOperator{\amp}{amp}
\DeclareMathOperator{\injdim}{injdim}
\DeclareMathOperator{\ASregxi}{ASreg_{\xi}} 
\DeclareMathOperator{\ASreg}{ASreg} 
\DeclareMathOperator{\CMreg}{CMreg} 
\DeclareMathOperator{\Extreg}{Extreg} 
\DeclareMathOperator{\Torreg}{Torreg}
\DeclareMathOperator{\CMregxi}{CMreg_{\xi}} 
\DeclareMathOperator{\Extregxi}{Extreg_{\xi}} 
\DeclareMathOperator{\Torregxi}{Torreg_{\xi}}
\DeclareMathOperator{\degxi}{deg_{\xi}}
\DeclareMathOperator{\gedxi}{ged_{\xi}}
\DeclareMathOperator{\reg}{reg}
\DeclareMathOperator{\D}{\mathsf{D}} \DeclareMathOperator{\Hom}{Hom}
\DeclareMathOperator{\RHom}{RHom}
\DeclareMathOperator{\depth}{depth} \DeclareMathOperator{\im}{im}
\newcommand{\fm}{\mathfrak{m}}
\newcommand{\be}{\begin{enumerate}}
\newcommand{\ee}{\end{enumerate}}
\newcommand{\bq}{\begin{eqnarray*}}
\newcommand{\eq}{\end{eqnarray*}}
\newcommand{\bqn}{\begin{eqnarray}}
\newcommand{\eqn}{\end{eqnarray}}
\newcommand{\op}{\textnormal{op}}
\newcommand{\fg}{\textnormal{fg}}
\newcommand{\bb}{\textnormal{b}}
\begin{document}
\title{Weighted homological regularities}

\author{E. Kirkman, R. Won and J.J. Zhang}

\address{Kirkman: Department of Mathematics,
P. O. Box 7388, Wake Forest University, Winston--Salem, NC 27109, USA}
\email{kirkman@wfu.edu}

\address{Won: Department of Mathematics, The George Washington
University, Washington, DC 20052, USA}
\email{robertwon@gwu.edu}

\address{Zhang: Department of Mathematics, Box 354350,
University of Washington, Seattle, WA 98195, USA}
\email{zhang@math.washington.edu}

\begin{abstract} 
Let $A$ be a noetherian connected graded algebra. We introduce 
and study homological invariants that are weighted sums of the 
homological and internal degrees of cochain complexes of graded 
$A$-modules, providing weighted versions of Castelnuovo--Mumford 
regularity, Tor-regularity, Artin--Schelter regularity, and 
concavity. In some cases an invariant (such as Tor-regularity) 
that is infinite can be replaced with a weighted invariant that 
is finite, and several homological invariants of complexes can 
be expressed as weighted homological regularities. We prove a 
few weighted homological identities some of which unify different 
classical homological identities and produce interesting new ones. 
\end{abstract}

\makeatletter
\@namedef{subjclassname@2020}{%
  \textup{2020} Mathematics Subject Classification}
\makeatother
\subjclass[2020]{16E10, 16E65, 20J99}

\keywords{Artin--Schelter regular algebra, Castelnuovo--Mumford 
regularity, Tor-regularity, Koszul algebra, Artin--Schelter 
regularity, concavity}


\maketitle


\setcounter{section}{-1}

\section{Introduction}
\label{xxsec0}

Let $\Bbbk$ be a base field and let $A$ be a connected graded 
$\Bbbk$-algebra. If $X$ is a complex of graded left $A$-modules, 
then there are two natural gradings on $X$, namely, the gradings 
by homological and internal degrees. Properties of $A$ can be 
reflected in the relationships between these degrees. For example, 
$A$ is Koszul if the trivial graded $A$-module $\Bbbk$ has a 
minimal free resolution of the form
\begin{equation}
\label{E0.0.1}\tag{E0.0.1}
\cdots \to A(-i)^{\beta_i}\to A(-i+1)^{\beta_{i-1}}
\to \cdots \to A(-1)^{\beta_1} \to A\to \Bbbk\to 0,
\end{equation}
or equivalently, $\Tor^A_i(\Bbbk,\Bbbk)_{j}=0$ for all $j\neq i$. 
In this case we say that the trivial $A$-module $\Bbbk$ has a 
\emph{linear resolution}, or that the \emph{Tor-regularity of 
$\Bbbk$} is 0. The Tor-regularity of a complex $X$ is defined 
to be
\begin{equation}
\label{E0.0.2}\tag{E0.0.2}
\Torreg(X)=\sup_{i,j \in \mathbb{Z}}\{j-i\mid 
\Tor^A_i(\Bbbk, X)_j\neq 0 \}.
\end{equation} 
This supremum of a particular linear combination of internal and 
homological degrees provides a measure of the growth of the degrees 
of generators of the free modules in a minimal free resolution of 
$X$. 

When $A$ is a noetherian commutative graded algebra generated in 
degree one, the Tor-regularity of the trivial module is either 
zero or infinity \cite{AP}, so the Tor-regularity measures only 
whether $A$ is Koszul or not. J{\o}rgensen and Dong--Wu 
\cite{Jo3, Jo4, DW} studied the Tor- and $\Ext$-regularities for 
noncommutative algebras, and further results on these regularities 
were the subject of \cite{KWZ2}. As shown in 
\cite[Example 2.4(4)]{KWZ2}, for any non-negative integer $n$, 
there is a noncommutative algebra whose trivial graded module 
$\Bbbk$ has Tor-regularity $n$, and so the Tor-regularity 
of the trivial graded module of a noncommutative algebra provides 
more information than whether or not the algebra is Koszul.

Castelnuovo--Mumford regularity (CM regularity for short) was 
introduced for commutative graded algebras as the supremum of 
another linear combination of homological and internal degrees 
(in this case the homological degree involves local cohomology).
Over a noetherian commutative graded algebra, every finitely 
generated graded module has finite CM regularity. In the 
noncommutative case, CM regularity was studied by J{\o}rgensen 
and Dong--Wu \cite{Jo3, Jo4, DW} and explored further in 
\cite{KWZ2}; for a noncommutative noetherian algebra, some 
finitely generated modules may have infinite CM regularity 
\cite[Example 5.1]{KWZ2}. In \cite[Definition 0.6]{KWZ2} a new 
notion of regularity involving both internal and homological 
degrees, the Artin--Schelter regularity, was introduced; it 
measures how close an algebra is to being an Artin-Schelter 
regular algebra [Definition \ref{xxdef0.1}].

In this paper we introduce {\em weighted} versions of classical 
homological invariants such as the $\Tor$-, $\Ext$- and CM 
regularities, as well as a weighted version of the 
Artin--Schelter regularity. These weighted invariants are defined 
as extrema of general weighted sums of homological and internal 
degrees of certain complexes, and hence they extend the original 
version of these invariants. Moreover, we will see that other 
useful invariants, such as the $\sup$, $\inf$, projective 
dimension and depth of a complex, can be viewed in the context 
of weighted regularities of complexes. These weighted invariants 
can provide new finite invariants, even for commutative algebras. 
For example, Proposition~\ref{xxpro5.8} gives a condition which 
guarantees the existence of some weight such that a 
\emph{weighted} Tor-regularity [Definition~\ref{xxdef0.2}] of 
the trivial module is finite. In particular, for a noetherian 
commutative algebra it implies that such a weight always exists.

We now define the weighted regularities that will be the focus 
of this paper. An $\mathbb{N}$-graded algebra $A$ is called 
{\it connected graded} if $A_0=\Bbbk$. For a connected graded 
algebra $A$, let $\fm = A_{\geq 1}$ and $\Bbbk = A/\fm$. An 
important class of connected graded algebras in this paper are 
the {\it Artin--Schelter regular} algebras \cite{AS} which play 
a central role in noncommutative algebraic geometry and 
representation theory.

\begin{definition}[{\cite[p.171]{AS}}]
\label{xxdef0.1}
A connected graded algebra $T$ is called 
{\it Artin--Schelter Gorenstein} (or {\it AS Gorenstein}, 
for short) if the following conditions hold:
\begin{enumerate}
\item[(a)]
$T$ has injective dimension $d<\infty$ on the left and on the 
right,
\item[(b)]
$\Ext^i_T({}_T\Bbbk, {}_{T}T)=\Ext^i_{T}(\Bbbk_T,T_T)=0$ for 
all $i\neq d$, and
\item[(c)]
$\Ext^d_T({}_T\Bbbk, {}_{T}T)\cong \Ext^d_{T}(\Bbbk_T,T_T)
\cong \Bbbk(\bfl)$ for some integer $\bfl$. Here $\bfl$ is 
called the {\it AS index} of $T$.
\end{enumerate}
In this case, we say $T$ is of type $(d,\bfl)$. If in addition,
\begin{enumerate}
\item[(d)]
$T$ has finite global dimension, and
\item[(e)]
$T$ has finite Gelfand--Kirillov dimension,
\end{enumerate}
then $T$ is called {\it Artin--Schelter regular} (or {\it AS
regular}, for short) of dimension $d$.
\end{definition}

In this paper we generally reserve the letters $S$ and $T$ for 
AS regular algebras. Note that the only commutative AS regular 
algebras are polynomial rings and so AS regular algebras are 
regarded as noncommutative versions of commutative polynomial 
rings. Recall that the $i$th local cohomology of a graded left 
$A$-module $M$ is defined to be
\[
H^i_{\fm}(M)=\lim\limits_{n\to \infty} \Ext^i_A(A/\fm^n, M).
\]
If $X$ is a complex of graded left $A$-modules, one can define 
the $i$th local cohomology of $X$, denoted by $H^i_{\fm}(X)$, 
similarly, as in \cite{Jo1, Jo4}.

\begin{definition}
\label{xxdef0.2}
Fix a real number $\xi$. Let $A$ be a noetherian connected graded 
algebra and let $X$ be a nonzero complex of graded left $A$-modules. 
\begin{enumerate}
\item[(1)]
The {\it $\xi$-Tor-regularity} of $X$ is defined to be
\[
\Torregxi (X)=\sup_{i, j \in \mathbb{Z}}\{ j -\xi i \; \mid \;
\Tor^A_i(\Bbbk, X)_j\neq 0\}.
\]
If $\xi=1$, then $\Torregxi(X)$ agrees with the usual 
Tor-regularity $\Torreg(X)$ defined in \eqref{E0.0.2} 
\cite{Jo3, Jo4, DW}.
\item[(2)]
The {\it $\xi$-Castelnuovo--Mumford regularity} (or 
{\it $\xi$-CM regularity}, for short) of $X$ is defined to be
\[
\CMregxi (X)=\sup_{i,j \in \mathbb{Z}} \{j +\xi i \; \mid \; 
H^{i}_{\fm}(X)_{j}\neq 0 \}.
\]
If $\xi=1$, then $\CMregxi(X)$ agrees with the usual 
Castelnuovo--Mumford regularity $\CMreg(X)$ defined in 
\cite{Jo3, Jo4, DW}.
\item[(3)]
The {\it $\xi$-Artin--Schelter regularity} (or 
{\it $\xi$-AS regularity}) of $A$ is defined to be 
\[
\ASregxi(A)=\Torregxi(\Bbbk)+\CMregxi(A).
\]
If $\xi = 1$, then $\ASregxi(A)$ agrees with the AS regularity 
introduced in \cite{KWZ2}.
\end{enumerate}
\end{definition}

The notions of regularity given in parts (1) and (2) above
are natural generalizations of the classical Tor- and 
Castelnuovo--Mumford regularities \cite{Mu3, Ei, EiG}. These 
weighted homological invariants provide useful information 
about the graded algebra $A$ and graded modules (or complexes 
of modules) over $A$. In Section~\ref{xxsec2} we will extend 
these definitions of weighted regularities to consider weights 
of the form $\xi = (\xi_0, \xi_1)$ and more general linear 
combinations of homological and internal degrees 
[Definitions \ref{xxdef2.1}, \ref{xxdef2.3}, and \ref{xxdef2.5}]. 
For simplicity in this introduction we consider only the case 
where $\xi_0 = 1$ and $\xi_1= \xi$. 

For a finitely generated graded $A$-module $M$, the relations 
between the regularities $\Torreg(M)$ and $\CMreg(M)$ have 
been studied in the literature. When $A$ is a polynomial ring 
generated in degree 1, $\Torreg(M) = \CMreg(M)$ \cite{EiG}, but 
this is not the case for all AS regular algebras 
\cite[Theorem 5.4]{DW}, see also Theorem~\ref{xxthm4.3}. Other 
relations between these invariants were established in the 
commutative case \cite{Rom} and were extended to the 
noncommutative case in \cite{Jo3, Jo4, DW}. In this paper we 
provide further relations between the weighted versions of these 
invariants in the noncommutative case. The following two theorems 
extend \cite[Theorems 2.5 and 2.6]{Jo4}, \cite[Theorem 4.2]{Rom},
and \cite[Proposition 5.6]{DW}.

\begin{theorem}
\label{xxthm0.3}
Let $A$ be a noetherian connected graded algebra with balanced
dualizing complex. Let $X$ be a nonzero object in 
$\D^{\bb}_{\fg}(A\Gr)$ and let $\xi \in \mathbb{R}$.
\begin{enumerate}
\item[(1)] {\rm (Theorem \ref{xxthm3.3})}
\[
\Torregxi(X)\leq \CMregxi(X)+\Torregxi(\Bbbk).
\]
\item[(2)] {\rm (Theorem \ref{xxthm3.5})}
\[
\CMregxi(X)\leq \Torregxi(X)+\CMregxi(A).
\]
\item[(3)] 
\[
\ASregxi(A)\geq 0.
\]
\end{enumerate}
\end{theorem}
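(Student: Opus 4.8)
The plan is to derive part (3) at once from part (2), equivalently from part (1), so that all of the substance lies in Theorems~\ref{xxthm3.5} and \ref{xxthm3.3}. For (3), apply part (2) to the nonzero object $X=\Bbbk$ of $\D^{\bb}_{\fg}(A\Gr)$: since $\fm\Bbbk=0$, the module $\Bbbk$ is $\fm$-torsion, so $H^0_{\fm}(\Bbbk)\cong\Bbbk$ and $H^i_{\fm}(\Bbbk)=0$ for $i\neq 0$, and as $\Bbbk$ is concentrated in internal degree $0$ this gives $\CMregxi(\Bbbk)=0$ for every $\xi$. Part (2) then yields $0=\CMregxi(\Bbbk)\leq\Torregxi(\Bbbk)+\CMregxi(A)=\ASregxi(A)$, which is (3). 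Dually one may feed $X=A$ into part (1): $A$ is free over itself, so $\Tor^A_i(\Bbbk,A)=0$ for $i\geq 1$ while $\Tor^A_0(\Bbbk,A)\cong\Bbbk$ sits in degree $0$, whence $\Torregxi(A)=0$ and $0=\Torregxi(A)\leq\CMregxi(A)+\Torregxi(\Bbbk)=\ASregxi(A)$. Observe that $\ASregxi(A)$ is always a real number or $+\infty$, since $\Torregxi(\Bbbk)\geq 0$ is realized by $\Tor^A_0(\Bbbk,\Bbbk)_0$ and $\CMregxi(A)$ is finite whenever $A$ has a balanced dualizing complex; so part (3) says precisely that this quantity is never strictly negative.

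For part (2) (Theorem~\ref{xxthm3.5}) I would proceed as follows. Fix a minimal graded free resolution $F\to X$ and compute $R\Gamma_{\fm}(X)$ from $F$: filtering $F$ by brutal truncation produces a spectral sequence with $E_1$-page $H^q_{\fm}(F^p)$ abutting to $H^{p+q}_{\fm}(X)$, which converges because $R\Gamma_{\fm}$ has finite cohomological dimension once $A$ admits a balanced dualizing complex. Writing $h=-p$ for the homological degree, the term $F^p$ is a finite direct sum of twists $A(-t)$ with $t\leq\Torregxi(X)+\xi h$, since $\Tor^A_h(\Bbbk,X)_t\neq 0$ forces $t-\xi h\leq\Torregxi(X)$; meanwhile $H^q_{\fm}(A(-t))_j=H^q_{\fm}(A)_{j-t}\neq 0$ forces $(j-t)+\xi q\leq\CMregxi(A)$. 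With $n=p+q=q-h$, adding these two inequalities gives $j+\xi n\leq\CMregxi(A)+\Torregxi(X)$ whenever $H^n_{\fm}(X)_j\neq 0$, i.e.\ $\CMregxi(X)\leq\Torregxi(X)+\CMregxi(A)$; the argument is uniform in $\xi\in\mathbb{R}$.

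For part (1) (Theorem~\ref{xxthm3.3}) the starting point is the isomorphism ${}_A\Bbbk\simeq\RHom_{A^{\op}}(\Bbbk_A,R)$ coming from local duality and $R\Gamma_{\fm}(\Bbbk)\simeq\Bbbk$, which is where the balancedness of $R$ is genuinely used; combined with the standard adjunction $\RHom_A(X,\RHom_{A^{\op}}(\Bbbk_A,R))\simeq\RHom_{A^{\op}}(\Bbbk_A,\RHom_A(X,R))$ it gives $\RHom_A(X,\Bbbk)\simeq\RHom_{A^{\op}}(\Bbbk,\RHom_A(X,R))$. The associated hyper-$\Ext$ spectral sequence has $E_2$-page $\Ext^p_{A^{\op}}(\Bbbk,\Ext^q_A(X,R))$ abutting to $\Ext^{p+q}_A(X,\Bbbk)$. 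Local duality forces $\Ext^q_A(X,R)_s=0$ for $s<-\CMregxi(X)-\xi q$, and computing $\Ext^p_{A^{\op}}(\Bbbk,-)$ from the minimal free resolution of $\Bbbk_A$---whose $p$-th term has generators in degrees $\leq\Torregxi(\Bbbk)+\xi p$---shows that $\Ext^p_{A^{\op}}(\Bbbk,\Ext^q_A(X,R))_m\neq 0$ forces $m\geq-\CMregxi(X)-\Torregxi(\Bbbk)-\xi(p+q)$. Since $\Tor^A_n(\Bbbk,X)_j\neq 0$ is equivalent to $\Ext^n_A(X,\Bbbk)_{-j}\neq 0$, we conclude $j-\xi n\leq\CMregxi(X)+\Torregxi(\Bbbk)$, that is $\Torregxi(X)\leq\CMregxi(X)+\Torregxi(\Bbbk)$, again for all $\xi\in\mathbb{R}$.

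The specializations in the first paragraph are immediate, so the real content sits in (1) and (2); the point most in need of care is the interaction of the weight $\xi$ with the two spectral sequences. One must secure their convergence for the possibly unbounded minimal free resolutions that occur, keep track of extrema that may equal $+\infty$ so that every inequality above retains its meaning, and---this is the essential use of the balanced dualizing complex---verify that $\RHom_A(\Bbbk,R)\simeq\Bbbk$ holds exactly rather than merely up to a shift and a twist, since any such shift or twist would contaminate the degree bookkeeping behind part (1).
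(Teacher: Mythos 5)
Your proof is correct and uses essentially the same machinery as the paper: local duality, degree bounds read off a minimal free resolution, and the two natural spectral sequences computing $\RHom_A(X,\Bbbk)$ and $\R\Gamma_{\fm}(X)$. The differences are cosmetic rather than structural. For (1), you route through $\RHom_A(X,\Bbbk)\cong\RHom_{A^{\op}}\bigl(\Bbbk,\RHom_A(X,R)\bigr)$ via the Hom-swap adjunction, whereas the paper (Proposition~\ref{xxpro3.2} combined with Theorem~\ref{xxthm3.3}) uses $\RHom_A(X,\Bbbk)\cong\RHom_A(\R\Gamma_{\fm}(X),\Bbbk)$ and the hyper-$\Ext$ spectral sequence $\Ext^m_A\bigl(H^{-n}(\R\Gamma_{\fm}X),\Bbbk\bigr)\Rightarrow\Ext^{m+n}_A(X,\Bbbk)$; after graded Matlis duality your $E_2$-page $\Ext^p_{A^{\op}}\bigl(\Bbbk,\Ext^q_A(X,R)\bigr)$ coincides with theirs, and your bookkeeping with the minimal resolution of $\Bbbk_A$ matches their use of its Matlis dual as a minimal injective resolution of ${}_A\Bbbk$. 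For (2), your $E_1$-page $H^q_{\fm}(F^p)$ is the same spectral sequence the paper invokes at the $E_2$-level as $\Tor^A_{-m}\bigl(H^n_{\fm^{\op}}(A),X\bigr)\Rightarrow H^{m+n}_{\fm}(X)$; the convergence point you raise (finite $\cd(\Gamma_{\fm})$) is exactly the relevant one. For (3), you substitute $X=\Bbbk$ into (2) while the paper substitutes $X=A$ into (1) and uses $\Extregxi(A)=0$; both are one-line specializations and you in fact note both routes.
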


\begin{theorem}[Theorem \ref{xxthm3.10}]
\label{xxthm0.4}
Let $A$ be a noetherian connected graded algebra with balanced
dualizing complex. Let $X$ be a nonzero object in 
$\D^{\bb}_{\fg}(A\Gr)$ of finite projective dimension.
\begin{enumerate}
\item[(1)] 
Suppose $0\leq \xi\leq 1$. Then
\[
\CMregxi(X)= \Torregxi(X)+\CMregxi(A).
\]
\item[(2)] 
For all $\xi \ll 0$,
\[
\CMregxi(X)= \Torregxi(X)+\CMregxi(A).
\]
\end{enumerate}
\end{theorem}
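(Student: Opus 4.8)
\emph{Plan.} The inequality ``$\leq$'' in both parts is precisely Theorem~\ref{xxthm0.3}(2) (${}=$ Theorem~\ref{xxthm3.5}), so the whole content is the reverse inequality $\CMregxi(X)\geq\Torregxi(X)+\CMregxi(A)$, which I would prove by exhibiting an explicit nonvanishing local cohomology class. Since $X$ has finite projective dimension, let $P_\bullet\to X$ be its finite minimal graded free resolution, $P_i=\bigoplus_j A(-j)^{\beta_{ij}}$ with $\beta_{ij}=\dim_\Bbbk\Tor^A_i(\Bbbk,X)_j$, and fix a complex $C^\bullet$ representing $\R\Gamma_{\fm}(A)$ (so $H^t(C^\bullet)=H^t_{\fm}(A)$). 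Because $P_\bullet$ is a bounded complex of finitely generated frees, $\R\Gamma_{\fm}(X)\simeq\operatorname{Tot}(C^\bullet\otimes_A P_\bullet)$, which gives a convergent spectral sequence with $E_1^{-i,t}=H^t_{\fm}(A)\otimes_A P_i$, hence $E_2^{-i,t}=\Tor^A_i(H^t_{\fm}(A),X)$, converging to $H^{t-i}_{\fm}(X)$, with $d_r\colon E_r^{-i,t}\to E_r^{-i+r,\,t-r+1}$. Two observations drive everything: (i) every differential fixes the internal degree and raises the total cohomological degree by $1$, hence shifts the quantity $(\text{internal degree})+\xi\cdot(\text{cohomological degree})$ by exactly $\xi$; and (ii) estimating the terms of $E_1$ as in the proof of Theorem~\ref{xxthm0.3}(2) shows that $W:=\Torregxi(X)+\CMregxi(A)$ is the largest value of that quantity occurring on any page of the spectral sequence.

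Next I would choose realizers: an $(i_0,j_0)$ with $\beta_{i_0,j_0}\neq0$, $j_0-\xi i_0=\Torregxi(X)$ and $j_0$ maximal among $\{j:\beta_{i_0,j}\neq0\}$; and a $(t_0,q_0)$ with $H^{t_0}_{\fm}(A)_{q_0}\neq0$, $q_0+\xi t_0=\CMregxi(A)$ and $q_0$ maximal among $\{q:H^{t_0}_{\fm}(A)_q\neq0\}$. Set $m=q_0+j_0$. Using minimality of $P_\bullet$ (all differential entries lie in $\fm$) together with the maximality of $j_0$ and $q_0$, one checks that in internal degree $m$ the complex computing $\Tor^A_\bullet(H^{t_0}_{\fm}(A),X)$ has vanishing differential both into and out of homological degree $i_0$: the degree-$j$ summands of $P_{i_0}$ contributing in degree $m$ force $j\geq j_0$, hence $j=j_0$; a degree-$j_0$ generator of $P_{i_0\pm1}$ maps into the $\fm$-multiples of lower-degree generators, which are killed after tensoring with the top-degree part $H^{t_0}_{\fm}(A)_{q_0}$. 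Therefore $E_2^{-i_0,t_0}$ in internal degree $m$ equals $\bigl(H^{t_0}_{\fm}(A)_{q_0}\bigr)^{\beta_{i_0,j_0}}\neq0$, and it sits at value $W$.

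It then remains to see that this class survives. For $0<\xi<1$: outgoing differentials vanish by (i)--(ii), since their target would have value $W+\xi>W$; for an incoming $d_r$, tracing the defining zig-zag, each application of the resolution differential strictly lowers the maximal degree of the free-module generators involved, so $d_r$ of anything lands supported on generators of $P_{i_0}$ of degree at most $j_0-r(1-\xi)<j_0$, which contribute $0$ in internal degree $m$ (as $q_0$ is the top degree of $H^{t_0}_{\fm}(A)$). Hence the class survives and $H^{t_0-i_0}_{\fm}(X)_m\neq0$, giving $\CMregxi(X)\geq m+\xi(t_0-i_0)=W$. The case $\xi=0$ then follows by continuity: each of $\CMregxi(X),\Torregxi(X),\CMregxi(A)$ is a finite maximum of affine functions of $\xi$, hence continuous, so equality on $(0,1]$ forces it at $\xi=0$. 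For the endpoint $\xi=1$ the zig-zag estimate only gives degree $\leq j_0$, so I would additionally take $i_0$ maximal among homological degrees realizing $\Torreg_1(X)$ and $t_0$ maximal among those realizing $\CMreg_1(A)$; then a nonzero incoming $d_r$ would force either homological degree $i_0+r$ to realize $\Torreg_1(X)$ or $t_0+r-1$ to realize $\CMreg_1(A)$, contradicting maximality. For part (2), when $\xi\ll0$ the realizers may be taken with $i_0=\pdim X$ and $t_0=\depth A$; then $P_{i_0+r}=0$ kills every incoming differential and $H^{t_0-r+1}_{\fm}(A)=0$ kills every outgoing one, so the corner term survives trivially, again yielding $\CMregxi(X)\geq W$ and hence equality.

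The hard part will be the incoming higher differentials: ruling them out is exactly where the constraint on $\xi$ is used. The crucial input is that the degree of the free-module generators involved drops by at least $1$ with each application of the minimal resolution differential, which forces the $d_r$-zig-zag below degree $j_0$ precisely when $\xi<1$; the endpoint $\xi=1$ needs the separate maximal-realizer bookkeeping, $\xi=0$ needs the continuity reduction, and $\xi\ll0$ needs the extreme-corner vanishing $P_{i_0+r}=0$, $H^{t_0-r+1}_{\fm}(A)=0$. Getting the zig-zag/support bookkeeping fully rigorous (working with the injective representative $C^\bullet$ of $\R\Gamma_{\fm}(A)$) is the main technical burden.
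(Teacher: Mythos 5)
Your proposal is correct in outline but takes a genuinely different route from the paper's. Both proofs use the convergent spectral sequence $E_2 = \Tor^A_i(H^t_{\fm}(A), X) \Rightarrow H^{t-i}_{\fm}(X)$ to obtain the easier inequality $\CMregxi(X)\leq\Torregxi(X)+\CMregxi(A)$ (Theorem~\ref{xxthm3.5}), but for the reverse inequality they diverge. The paper's Theorem~\ref{xxthm3.10}(1) argues by induction on $\pdim X$, using the distinguished triangle $G\to F^0\to F\to G[1]$ (with $F$ a minimal free resolution, $F^0$ its bottom term, $G$ the rest), a three-case analysis according to the relative sizes of $\Torregxi(F^0)$, $\Torregxi(G)$, $\Torregxi(F)$, and a key \emph{Claim} showing that a certain connecting map $\tau_2:H^j_{\fm}(G)\to H^j_{\fm}(F^0)$ fails to be surjective — the proof of the Claim is a minimality/degree-drop argument. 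You instead show directly that a suitable corner term $E_2^{-i_0,t_0}$ in internal degree $m=j_0+q_0$ is nonzero and survives to $E_\infty$: outgoing differentials vanish by the page-wide value bound (the very estimate that gives $\leq$), while incoming differentials vanish by a zig-zag degree-drop argument using minimality of the resolution and the integrality bound $t^A_{i_0+1}(X)\leq j_0$ valid for $0\leq\xi<1$. This is the same minimality input that powers the paper's Claim, relocated inside the spectral sequence. Both proofs dispose of the boundary weights by continuity (the paper via Lemma~\ref{xxlem3.1}(5,6) reduces to $\xi\neq 0,1$; your separate ``maximal realizer'' bookkeeping for $\xi=1$ is valid but not needed once continuity is available). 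For part~(2), your extreme-corner observation that $P_{i_0+r}=0$ kills incoming and $H^{t_0-r+1}_{\fm}(A)=0$ kills outgoing differentials is a spectral-sequence rephrasing of the paper's Lemmas~\ref{xxlem3.8}--\ref{xxlem3.9}. The trade-off: your route is more unified (one spectral sequence, no case split), whereas the paper's stays at the level of long exact sequences in local cohomology and avoids the chain-level bookkeeping for $d_r$ that your approach requires — and that bookkeeping genuinely needs care, since the terms of a complex representing $\R\Gamma_{\fm}(A)$ are not degree-bounded (only the cohomologies $H^t_{\fm}(A)$ are), so the degree-drop must be tracked through the generators of the $P_\bullet$-factor alone, as you indicate but do not fully carry out.
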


If $M$ is a graded vector space, let $\deg(M)$ denote the 
maximal degree of the nonzero homogeneous elements in $M$,
as in equation~\eqref{E1.1.1}. (A more general two-parameter 
definition of the weighted degree of a complex is given 
in equation~\eqref{E1.1.3}.)

\begin{remark}
\label{xxrem0.5}
Retain the hypotheses of Theorem \ref{xxthm0.4}.
\begin{enumerate}
\item[(1)]
If $\xi>1$, then by Remarks~\ref{xxrem2.6}(1) and 
\ref{xxrem3.11}(1), the conclusion of Theorem \ref{xxthm0.4} 
may fail to hold, even when $A$ is AS regular and $X=\Bbbk$.
\item[(2)]
It is unknown if Theorem \ref{xxthm0.4}(2) holds for all
$\xi<0$, see Remark \ref{xxrem3.11}(2). 
\item[(3)]
The famous Auslander--Buchsbaum formula in the graded setting 
can be recovered from by taking $\lim\limits_{\xi\to -\infty} 
\frac{1}{\xi} (-)$ in Theorem \ref{xxthm0.4}(2) (see Corollary 
\ref{xxcor3.12}(1)). Hence Theorem \ref{xxthm0.4} unifies the 
Auslander--Buchsbaum formula \cite[Theorem 3.2]{Jo2} with 
\cite[Theorem 4.2]{Rom} (in the commutative case) and 
\cite[Proposition 5.6]{DW} and \cite[Theorem 0.7]{KWZ2} 
(in the noncommutative case).
\item[(4)]
In addition to part (3), when taking 
$\lim\limits_{\xi\to -\infty}(-)$ of Theorem \ref{xxthm0.4}(2), 
in Corollary~\ref{xxcor3.12}(2), we obtain a new homological 
identity 
\begin{equation}
\label{E0.5.1}\tag{E0.5.1}
\deg H^{d(X)}_{\fm}(X)
=\deg \Tor^A_{p(X)}(\Bbbk, X)+\deg H^{d(A)}_{\fm}(A),
\end{equation}
where $p(X):= \pdim (X)$ and $d(X):=\depth(X)$. We call 
\eqref{E0.5.1} a \emph{refined Auslander--Buchsbaum formula}.
\end{enumerate}
\end{remark}

In \cite[Theorem 0.8]{KWZ2}, we generalized a result of Dong and 
Wu \cite[Theorems 4.10 and 5.4]{DW} to the not-necessarily Koszul 
setting to show that a noetherian connected graded algebra $A$ 
with balanced dualizing complex is AS regular if and only if 
$\ASreg(A) = 0$. Here, we extend this result to the weighted 
setting. The Cohen--Macaulay property will be defined in 
Definition \ref{xxdef1.2}.

\begin{theorem}
\label{xxthm0.6}
Let $A$ be a noetherian connected graded algebra with balanced 
dualizing complex. Then the following are equivalent:
\begin{enumerate}
\item[(1)]
$A$ is AS regular.
\item[(2)]
There exists a $\xi \leq 1$ such that $\ASregxi(A)=0$. 
\item[(3)]
$A$ is Cohen--Macaulay and there exists a $\xi \in \mathbb{R}$ 
such that $\ASregxi(A)=0$.
\end{enumerate}
\end{theorem}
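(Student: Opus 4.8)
The plan is to prove the cycle $(1)\Rightarrow(3)\Rightarrow(2)\Rightarrow(1)$. The engine is the following observation: if $\Bbbk$ has finite projective dimension, then Theorem~\ref{xxthm0.4}(1) applied to $X=\Bbbk$ at the weight $\xi=1$ gives $\Torreg_1(\Bbbk)+\CMreg_1(A)=\CMreg_1(\Bbbk)=0$, using that $H^i_\fm(\Bbbk)$ is $\Bbbk$ concentrated in homological and internal degree $0$ and vanishes otherwise; thus $\ASreg_1(A)=0$ and $A$ is AS regular by \cite[Theorem~0.8]{KWZ2}. Hence, for both $(2)\Rightarrow(1)$ and $(3)\Rightarrow(2)$, the real task is to force $\pdim(\Bbbk)<\infty$. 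The implication $(1)\Rightarrow(3)$ is essentially formal: an AS regular algebra is AS Gorenstein of some injective dimension $d$, so local duality against the balanced dualizing complex gives $H^i_\fm(A)=0$ for $i\neq d$ and $H^d_\fm(A)\neq 0$; together with $\depth(A)=d$ (Definition~\ref{xxdef0.1}(b)) and $\GKdim(A)=d$ this says $A$ is Cohen--Macaulay (Definition~\ref{xxdef1.2}), while $\ASreg_1(A)=0$, computed as above, shows that $\xi=1\in\mathbb{R}$ witnesses (3).

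For $(2)\Rightarrow(1)$, suppose $\ASregxi(A)=0$ with $\xi\leq 1$. If $\xi=1$, apply \cite[Theorem~0.8]{KWZ2} directly. If $\xi<1$, assume for contradiction that $\pdim(\Bbbk)=\infty$; the $i$-th term of the minimal free resolution of $\Bbbk$ is generated in internal degrees $\geq i$, so $\deg\Tor^A_i(\Bbbk,\Bbbk)\geq i$ for all $i$, and therefore $\Torregxi(\Bbbk)\geq\sup_{i\geq 0}(i-\xi i)=\sup_{i\geq 0}(1-\xi)i=+\infty$; since the balanced dualizing complex makes $R\Gamma_\fm(A)$ nonzero with bounded-above cohomology, $\CMregxi(A)$ is finite, so $\ASregxi(A)=+\infty$, a contradiction. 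Thus $\pdim(\Bbbk)<\infty$, and the reduction of the first paragraph finishes the argument.

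For $(3)\Rightarrow(2)$, suppose $A$ is Cohen--Macaulay and $\ASregxi(A)=0$ for some $\xi\in\mathbb{R}$; there is nothing to do if $\xi\leq 1$, so suppose $\xi>1$. Cohen--Macaulayness gives $H^i_\fm(A)=0$ for $i\neq c:=\cd(A)$, so $\CMreg_\nu(A)=a+c\nu$ for all real $\nu$, with $a:=\deg H^c_\fm(A)$ finite; thus $\ASreg_\nu(A)=\Torreg_\nu(\Bbbk)+a+c\nu$ is convex in $\nu$, is $\geq 0$ by Theorem~\ref{xxthm0.3}(3), and vanishes at $\nu=\xi$. As before it suffices to prove $\pdim(\Bbbk)<\infty$, so assume $\pdim(\Bbbk)=\infty$. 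Then $\deg\Tor^A_i(\Bbbk,\Bbbk)\geq i$ forces $\Torreg_\nu(\Bbbk)=+\infty$, hence $\ASreg_\nu(A)=+\infty$, for all $\nu<1$, so any zero of $\ASreg_\nu(A)$ lies in $[1,\infty)$; I therefore want $\ASreg_\nu(A)>0$ throughout $[1,\infty)$, which contradicts $\ASreg_\xi(A)=0$. If $c=0$ then $A$ is finite-dimensional with $A\neq\Bbbk$, so $a=\deg A\geq 1$ and $\ASreg_\nu(A)\geq a>0$. The case $c\geq 1$ is where the difficulty lies.

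The heart of the matter, and the step I expect to be the main obstacle, is showing $\ASreg_\nu(A)>0$ for all $\nu\geq 1$ when $c\geq 1$ and $\pdim(\Bbbk)=\infty$. The key inequality is the strict bound $a+c\,\slope(A)>0$, where $\slope(A)=\sup_{i\geq 1}\deg\Tor^A_i(\Bbbk,\Bbbk)/i$ (which we may assume finite, else $\ASreg_\nu(A)\equiv+\infty$): since $\Torreg_\nu(\Bbbk)=0$ precisely for $\nu\geq\slope(A)$, this gives $\ASreg_\nu(A)=a+c\nu\geq a+c\,\slope(A)>0$ on $[\slope(A),\infty)$, and a finer analysis of $\Torreg_\nu(\Bbbk)$ on $[1,\slope(A))$—again using $\deg\Tor^A_i(\Bbbk,\Bbbk)\geq i$ and the fact that we may assume $\ASreg_1(A)>0$ (since $\ASreg_1(A)=0$ already yields AS regularity by \cite[Theorem~0.8]{KWZ2})—should close it out. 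The non-strict bound $a+c\,\slope(A)\geq 0$ I would get by applying Theorem~\ref{xxthm0.3}(1) to the canonical module $\omega_A=H^c_\fm(A)^{\vee}\in\D^{\bb}_{\fg}(A\Gr)$: from $H^i_\fm(\omega_A)=0$ for $i\neq c$ and $H^c_\fm(\omega_A)\cong A^{\vee}$ (coming from $R\Gamma_\fm$ of the balanced dualizing complex) one has $\CMreg_\nu(\omega_A)=c\nu$, hence $-a\leq\Torreg_\nu(\omega_A)\leq c\nu+\Torreg_\nu(\Bbbk)$, so $-a\leq c\,\slope(A)$ on setting $\nu=\slope(A)$. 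Upgrading this to a strict inequality—equivalently, showing that equality would force $\omega_A$ to be free and $A$ to be AS regular—is the delicate point I foresee; everything else is bookkeeping with the inequalities of Theorems~\ref{xxthm0.3} and~\ref{xxthm0.4} together with \cite[Theorem~0.8]{KWZ2}.
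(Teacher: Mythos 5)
Your implications $(1)\Rightarrow(3)$ and $(2)\Rightarrow(1)$ are correct and run along essentially the same lines as the paper: for $(2)\Rightarrow(1)$ you reduce to showing $\pdim\Bbbk<\infty$, handle $\xi=1$ by \cite[Theorem~0.8]{KWZ2}, and for $\xi<1$ use $\deg\Tor^A_i(\Bbbk,\Bbbk)\geq i$ to force $\Torregxi(\Bbbk)=\infty$ unless the resolution of $\Bbbk$ terminates. That is exactly what the paper does (via Example~\ref{xxex2.4}(5)). One minor remark on $(1)\Rightarrow(3)$: Cohen--Macaulayness of an AS regular algebra is just the vanishing $H^i_\fm(A)=0$ for $i\neq d$; the mention of $\GKdim(A)=d$ is not needed for Definition~\ref{xxdef1.2} and slightly obscures the point, but this is cosmetic.

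The genuine gap is in $(3)\Rightarrow(2)$, and you have correctly diagnosed it yourself. Your plan is to show that, under $\pdim\Bbbk=\infty$, the function $\nu\mapsto\ASreg_\nu(A)=\Torreg_\nu(\Bbbk)+a+c\nu$ is strictly positive on $[1,\infty)$, thereby contradicting $\ASreg_\xi(A)=0$ for some $\xi>1$. You obtain the non-strict bound $a+c\,\slope(A)\geq 0$, and the case $\nu\geq\slope(A)$ reduces to the strict bound $a+c\,\slope(A)>0$, but you give no argument for strictness; your suggestion that ``equality would force $\omega_A$ to be free'' is a conjecture, not a proof, and the ``finer analysis of $\Torreg_\nu(\Bbbk)$ on $[1,\slope(A))$'' is also left open. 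In addition, you need $\slope(A)<\infty$ to even set this up, and you invoke it by saying otherwise $\ASreg_\nu(A)\equiv+\infty$; but $\slope(A)$ could be finite while still $\pdim\Bbbk=\infty$, so the two cases genuinely need separate treatment. As written, the chain does not close.

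The paper's route to $(3)\Rightarrow(1)$ (its Theorem~\ref{xxthm4.1}) is structurally quite different and avoids this difficulty entirely. Starting from a minimal free resolution $F$ of ${}_A\Bbbk$ with syzygies $Z_j(F)$, it uses Cohen--Macaulayness to see that the balanced dualizing complex is a shifted bimodule $\omega[d]$ and that $H^i_\fm(F_j)=0$ for $i\neq d$; an induction on syzygies then shows $H^i_\fm(Z_{j-1}(F))=0$ for $i\neq d$ and $j\geq d$, so the short exact sequence $0\to Z_j(F)\to F_j\to Z_{j-1}(F)\to 0$ yields a three-term exact sequence in $H^d_\fm$ and hence $\CMregxi(Z_j(F))\leq\CMregxi(F_j)$. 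Since $\ASregxi(A)=0$, Corollary~\ref{xxcor3.6} converts this uniformly to Tor-degrees, giving $t^A_{j+1}(\Bbbk)\leq t^A_j(\Bbbk)$ for all $j\geq d$. This directly contradicts $t^A_j(\Bbbk)\geq j$ whenever the resolution is infinite, without any appeal to convexity of $\ASreg_\nu$, to slope, or to a strict lower bound on $a+c\,\slope(A)$. I'd recommend replacing your analysis on $[1,\infty)$ with this syzygy argument, which closes the gap cleanly and works uniformly for all weights $\xi$ with $\xi_0>0$.
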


It is an open question if the hypothesis that $\xi \leq 1$ can 
be removed from part (2) or the hypothesis that $A$ is 
Cohen--Macaulay can be removed from part (3).

By Example \ref{xxex2.2}(3), if $T$ is AS regular (or AS 
Gorenstein) of type $(d,\bfl)$, then
\begin{equation}
\label{E0.6.1}\tag{E0.6.1}
\CMregxi(T)=\xi d-\bfl
\end{equation}
which will appear in several places in this paper. If $T$ is 
AS regular and $\xi\leq 1$, then we also have
$$\Torregxi(\Bbbk)=-\xi d+\bfl=-\CMregxi(T),$$
which follows from a direct computation, or from \eqref{E0.6.1} 
and the fact $\ASregxi(A)=0$. Some further computations of 
weighted regularities in the non-Koszul case are provided in 
Remark \ref{xxrem2.6}. 

As an immediate consequence of Theorem \ref{xxthm0.6}, we have
the following corollary.

\begin{corollary}
\label{xxcor0.7}
Let $A$ be a noetherian AS Gorenstein algebra of type $(d,\bfl)$.
Suppose there is a $\xi\in {\mathbb R}$ such that 
$$\deg \Tor^A_i(\Bbbk,\Bbbk)\leq \xi i + \bfl-\xi d$$
for all $i\geq 0$. Then $A$ is AS regular.
\end{corollary}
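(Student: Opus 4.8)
The plan is to deduce the corollary from Theorem~\ref{xxthm0.6} by verifying that the hypothesis forces $\ASregxi(A) \le 0$ for a suitable $\xi$, and then to invoke Theorem~\ref{xxthm0.3}(3) to get the reverse inequality $\ASregxi(A) \ge 0$, so that $\ASregxi(A) = 0$. First I would unwind definitions. By Definition~\ref{xxdef0.2}(3), $\ASregxi(A) = \Torregxi(\Bbbk) + \CMregxi(A)$. Since $A$ is AS Gorenstein of type $(d,\bfl)$, equation~\eqref{E0.6.1} gives $\CMregxi(A) = \xi d - \bfl$. On the other hand, $\Torregxi(\Bbbk) = \sup_{i}\{\,\deg \Tor^A_i(\Bbbk,\Bbbk) - \xi i\,\}$, since $\Tor^A_i(\Bbbk,\Bbbk)$ is a finite-dimensional graded vector space (using that $A$ is noetherian connected graded) and $j - \xi i$ is maximized over $j$ with $\Tor^A_i(\Bbbk,\Bbbk)_j \ne 0$ precisely at $j = \deg \Tor^A_i(\Bbbk,\Bbbk)$. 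The standing hypothesis $\deg \Tor^A_i(\Bbbk,\Bbbk) \le \xi i + \bfl - \xi d$ for all $i \ge 0$ rearranges to $\deg \Tor^A_i(\Bbbk,\Bbbk) - \xi i \le \bfl - \xi d$ for all $i$, hence $\Torregxi(\Bbbk) \le \bfl - \xi d$. Adding the two bounds yields $\ASregxi(A) = \Torregxi(\Bbbk) + \CMregxi(A) \le (\bfl - \xi d) + (\xi d - \bfl) = 0$.

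Next I would combine this with Theorem~\ref{xxthm0.3}(3), which asserts $\ASregxi(A) \ge 0$; together these give $\ASregxi(A) = 0$. A mild point to check is that Theorem~\ref{xxthm0.3}(3) requires $A$ to have a balanced dualizing complex, which holds here since $A$ is noetherian AS Gorenstein (AS Gorenstein noetherian connected graded algebras admit balanced dualizing complexes, and in fact the hypotheses of Theorem~\ref{xxthm0.6} are satisfied). Actually, one should double-check whether Theorem~\ref{xxthm0.3}(3) and \eqref{E0.6.1} require AS regularity rather than merely AS Gorenstein; \eqref{E0.6.1} is stated for AS Gorenstein as well, and Theorem~\ref{xxthm0.3} only needs the balanced dualizing complex hypothesis, so this is fine.

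Having established $\ASregxi(A) = 0$, I would then split on the sign of $\xi$. If $\xi \le 1$, Theorem~\ref{xxthm0.6}(1)$\Leftrightarrow$(2) applies directly to conclude that $A$ is AS regular. If $\xi > 1$, I would instead observe that the hypothesis inequality, being an upper bound on $\deg \Tor^A_i(\Bbbk,\Bbbk)$ that is linear in $i$ with slope $\xi$, also holds for any smaller slope provided we adjust: for $\xi' \le 1 \le \xi$ we have $\xi' i + \bfl - \xi' d \le \xi i + \bfl - \xi d$ exactly when $(\xi - \xi')(i - d) \ge 0$, which fails for $i < d$. So a crude slope-lowering does not immediately work, and the honest route is to note that $\ASregxi(A) = 0$ for our given $\xi$ already lets us apply part (3) of Theorem~\ref{xxthm0.6} \emph{if} we know $A$ is Cohen--Macaulay. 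An AS Gorenstein algebra is Cohen--Macaulay (its local cohomology is concentrated in a single degree $d$), so Theorem~\ref{xxthm0.6}(3)$\Rightarrow$(1) applies regardless of the sign of $\xi$, and $A$ is AS regular.

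The main obstacle I anticipate is the bookkeeping around which hypotheses (AS Gorenstein vs.\ AS regular, balanced dualizing complex, Cohen--Macaulay) are needed for each cited ingredient, and ensuring the $\xi > 1$ case is covered; the resolution is to route through part (3) of Theorem~\ref{xxthm0.6} using that AS Gorenstein implies Cohen--Macaulay, rather than part (2). The rest is a one-line inequality chase combining \eqref{E0.6.1}, the definition of $\Torregxi(\Bbbk)$, and Theorem~\ref{xxthm0.3}(3).
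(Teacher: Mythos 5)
Your proposal is correct and follows essentially the same route as the paper: establish $\ASregxi(A)\le 0$ from the hypothesis and \eqref{E0.6.1}, upgrade to $\ASregxi(A)=0$ via Theorem~\ref{xxthm0.3}(3), and then conclude via Theorem~\ref{xxthm0.6}(3$\Rightarrow$1) using the fact that AS Gorenstein implies Cohen--Macaulay. Your extra deliberation about part~(2) vs.\ part~(3) of Theorem~\ref{xxthm0.6} is sound and correctly resolves to part~(3), which is exactly what the paper invokes.
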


Note that Corollary \ref{xxcor0.7} recovers 
\cite[Theorem 4.10]{DW} by setting $\bfl=d$ and $\xi=1$.
We also prove a weighted version of \cite[Corollary 5.2]{Jo3}, 
which can be used to compute the weighted CM regularity of a 
finitely generated graded module over an AS Gorenstein algebra.

\begin{theorem}[Theorem \ref{xxthm4.6}]
\label{xxthm0.8} 
Suppose $A$ is a noetherian AS Gorenstein algebra of 
type $(d,\bfl)$. Let $\xi \leq 1$ be a real number and
let $M\neq 0$ be a finitely generated graded left 
$A$-module with finite projective dimension.
\begin{enumerate}
\item[(1)]
Let $w$ be an integer with $0 \leq w \leq d$. Then
$$\begin{aligned}
\max_{0 \leq j \leq w}\{\deg H^j_{\fm}(M) & +\xi j\}
=-\bfl+\xi d+\max_{d-w \leq j \leq d}
\{\deg \Tor^A_{j}(\Bbbk, M) -\xi j\}.
\end{aligned}
$$
\item[(2)]
In particular, if $w$ is chosen to be maximal with the 
property that $H^w_{\fm}(M)\neq 0$, we have
$$\CMregxi(M)=-\bfl+\xi d+\max_{d-w \leq j \leq d}
\{\deg \Tor^A_j(\Bbbk,M)-\xi j\}.$$
\item[(3)] 
If, further, $M$ is $s$-Cohen--Macaulay, then $p:=d-s$ is the 
projective dimension of $M$ and 
$$\CMregxi(M)=-\bfl+\xi s+\deg(\Tor^A_{p}(\Bbbk, M)).$$
\end{enumerate}
\end{theorem}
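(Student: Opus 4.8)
The plan is to reduce everything to a duality statement relating local cohomology of $M$ to the graded dual of an $\Ext$-module, and then track internal degrees through a minimal free resolution. The starting point is the AS Gorenstein hypothesis: since $A$ has injective dimension $d$ on both sides and the balanced dualizing complex is (up to shift) $A(\bfl)[d]$, local duality gives a natural isomorphism $H^j_{\fm}(M)' \cong \Ext^{d-j}_A(M, A(\bfl))$ of graded right $A$-modules, where $(-)'$ denotes the graded $\Bbbk$-linear dual. Taking degrees, $\deg H^j_{\fm}(M) = -\inf\{\, t \mid \Ext^{d-j}_A(M,A)_{t-\bfl}\neq 0\,\} = \bfl - \inf \Ext^{d-j}_A(M,A)$. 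So part (1) will follow once I relate $\inf \Ext^{d-j}_A(M,A)$ to $\deg \Tor^A_{d-j}(\Bbbk,M)$.

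For that relation I would use a minimal graded free resolution $P_\bullet \to M$ of finite length $p = \pdim(M)$, with $P_i = \bigoplus_\alpha A(-a_{i,\alpha})$; minimality gives $\deg \Tor^A_i(\Bbbk,M) = \max_\alpha a_{i,\alpha}$. Applying $\Hom_A(-,A)$ yields a complex $\Hom_A(P_\bullet, A)$ with $\Hom_A(P_i,A) = \bigoplus_\alpha A(a_{i,\alpha})$, whose cohomology in degree $d-j$ is $\Ext^{d-j}_A(M,A)$. The inequality $\inf \Ext^{d-j}_A(M,A) \geq \inf \Hom_A(P_{d-j},A) = -\max_\alpha a_{d-j,\alpha} = -\deg\Tor^A_{d-j}(\Bbbk,M)$ is immediate; this already gives ``$\leq$'' in part (1) after reindexing $j \mapsto d-j$ and bounding the max over $0\le j\le w$ by the max over $d-w\le j\le d$. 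For the reverse inequality one needs that the extremal degree is not killed by the differential — this is exactly where the hypothesis $\xi\le 1$ enters. The clean way is to invoke Theorem \ref{xxthm0.4}(1), which under $0\le\xi\le 1$ gives $\CMregxi(M) = \Torregxi(M) + \CMregxi(A)$, together with \eqref{E0.6.1}, namely $\CMregxi(A) = \xi d - \bfl$; for $\xi<0$ one instead invokes Theorem \ref{xxthm0.4}(2), and the full range $\xi\le 1$ is covered by splicing these. Unwinding the definitions of $\Torregxi$ and $\CMregxi$ as suprema of $j-\xi i$ and $j+\xi i$ respectively, and using that $M$ has finite projective dimension so the relevant $i$ range over $0\le i\le p\le d$, turns this identity into the ``global'' ($w=d$) case of the asserted formula. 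The refinement to general $w$ then follows by a truncation/vanishing argument: the terms with $j>w$ on the left are zero by the choice of $w$ in part (2), and for part (1) one shows the matching max's agree degree-by-degree by observing that the local duality isomorphism is graded-module-natural, so the contribution of each homological slot is matched individually.

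Part (2) is just part (1) with $w$ taken maximal so that $H^w_{\fm}(M)\neq 0$: then $\max_{0\le j\le w}\{\deg H^j_{\fm}(M)+\xi j\}$ is literally $\CMregxi(M)$ because $\xi\le 1$ forces the weighted sup over all $j$ to be attained at $j\le w$ (higher $j$ give zero modules), which is the same trick used to pass from Definition \ref{xxdef0.2}(2) to a finite max. Part (3) specializes further: if $M$ is $s$-Cohen--Macaulay then $H^j_{\fm}(M)=0$ for $j\neq s$, so on the left only $j=s$ survives and on the right, by the duality, only $\Ext^{d-s}_A(M,A)\neq 0$, forcing $\pdim(M)=d-s=:p$ and collapsing the max on the right to the single term $\deg\Tor^A_p(\Bbbk,M) - \xi p$; substituting $p=d-s$ into $-\bfl+\xi d + (\deg\Tor^A_p(\Bbbk,M)-\xi(d-s))$ gives $-\bfl+\xi s+\deg\Tor^A_p(\Bbbk,M)$.

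The main obstacle is establishing the reverse inequality in part (1) — showing the extremal internal degree in $\Hom_A(P_{d-j},A)$ actually survives to cohomology rather than cancelling. Rather than arguing with the differentials directly (which would require a minimal-resolution argument on the dual side and is fiddly because $\Hom_A(P_\bullet,A)$ need not be minimal), I will route through Theorem \ref{xxthm0.4}, whose proof already handles this via the balanced dualizing complex; the cost is that I must carefully justify the passage from the single scalar identity $\CMregxi(M)=\Torregxi(M)+\CMregxi(A)$ to the refined ``per-homological-degree'' statement, which is where the gradedness and naturality of local duality must be used with care.
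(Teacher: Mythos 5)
Your local-duality step and the ``$\leq$'' direction of part (1) are on the right track, and they do match the opening of the paper's proof of Theorem \ref{xxthm4.6} (aside from a sign slip: the isomorphism should read $H^j_{\fm}(M)\cong\Ext^{d-j}_A(M,A)'(\bfl)$, not $H^j_{\fm}(M)'\cong\Ext^{d-j}_A(M,A(\bfl))$). You also correctly identify that the crux is showing the extremal degrees survive dualization. But the resolution you propose for that crux fails in two concrete ways.

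First, routing through Theorem \ref{xxthm0.4} at best yields only the $w=d$ case, namely the single scalar identity $\CMregxi(M)=\Torregxi(M)+\CMregxi(A)$. You then claim the refinement to partial maxima over $0\le j\le w$ follows because ``the local duality isomorphism is graded-module-natural, so the contribution of each homological slot is matched individually.'' Local duality indeed matches $\deg H^j_{\fm}(M)$ with $-\ged\Ext^{d-j}_A(M,A)$ slot by slot, but the identification of $-\ged\Ext^{d-j}_A(M,A)$ with $\deg\Tor^A_{d-j}(\Bbbk,M)$ is \emph{not} slot by slot, and it is false in general: when you apply $\Hom_A(-,A)$ to a minimal free resolution $F$, the resulting complex $F^\vee$ is generally not minimal, and the generator of smallest degree in $(F^\vee)^{d-j}$ can be killed by the incoming differential, giving $\ged H^{d-j}(F^\vee) > -\deg\Tor^A_{d-j}(\Bbbk,M)$ for that $j$. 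This is exactly Case 1 in the proof of the paper's Theorem \ref{xxthm4.5}, where the extremal term is \emph{absorbed} by the previous maximum rather than matched. The content of Theorem \ref{xxthm4.5} is precisely that the \emph{accumulated} maxima
\[
\max_{0\le j\le c}\bigl\{-\ged H^{s-j}(F^\vee)+\xi j\bigr\}
\quad\text{and}\quad
\max_{0\le j\le c}\bigl\{\deg H_{s-j}(\Bbbk\otimes_A F)+\xi j\bigr\}
\]
coincide for every truncation level $c$, proved by a two-case induction on $c$ that carefully compares generator degrees in adjacent terms; that induction is the missing ingredient your proposal does not supply.

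Second, even the $w=d$ case cannot be recovered from Theorem \ref{xxthm0.4} across the full range $\xi\le 1$. Part (1) of that theorem covers $0\le\xi\le 1$ and part (2) covers only $\xi\ll 0$, and Remark \ref{xxrem3.11}(2) explicitly records that it is unknown whether the identity holds for all $\xi<0$. ``Splicing these'' therefore leaves a genuine gap in moderate negative $\xi$, while Theorem \ref{xxthm0.8} is asserted for every $\xi\le 1$. The paper avoids this by proving Theorem \ref{xxthm4.5} unconditionally by induction, independent of Theorem \ref{xxthm3.10}, and then deducing Theorem \ref{xxthm4.6} from it. Parts (2) and (3) of your argument are fine once part (1) is in hand, and match the paper's derivation.
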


The above results are generalizations of classical results in 
various directions, for example, from commutative algebras to 
noncommutative algebras, from modules to complexes, and from 
unweighted regularities to weighted regularities.

Homological invariants, as well as homological identities have  
many applications. In \cite[Theorems 0.8 and 0.10]{KWZ1}, the 
authors used regularities to bound the maximal degree of 
generators of the invariant rings under Hopf actions. In 
\cite[Corollary 0.11]{KWZ2}, the authors demonstrated how to 
control the Koszul property of an AS regular algebra $A$ by 
using a finite map $T \to A$. Following the ideas of Backelin 
\cite{Ba}, we can also use finite maps to control the Koszul 
property of higher Veronese subrings.

In Section~\ref{xxsec5} we consider the case when there is a 
finite graded map from a noetherian AS regular algebra $T$ into 
a connected graded algebra $A$. In Proposition \ref{xxpro5.8} 
we show there exists a weight $\xi$ with $\Torregxi(X)<\infty$ 
for all $X\in \D^{\bb}_{\fg}(A\Gr)$. This result can be applied 
to all noetherian commutative graded algebras, where the 
Tor-regularity is not always finite. As the value of 
$\Torreg({}_A\Bbbk)$ is related to the Koszul property of $A$, 
the values of $\Torregxi({}_A \Bbbk)$ are related to the Koszul 
property of the Veronese subrings of $A$ \cite[Corollary, p. 81]{Ba}.
See a related result in Proposition \ref{xxpro5.8}(1).

\begin{corollary}[Corollary \ref{xxcor5.9}]
\label{xxcor0.11}
Let $A$ be a noetherian algebra generated in degree 1 and suppose 
there is a finite map $T \to A$ where $T$ is a noetherian 
connected graded algebra of finite global dimension. Then 
$A^{(d)}$ is Koszul for $d\gg 0$.
\end{corollary}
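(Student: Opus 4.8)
The plan is to deduce Corollary \ref{xxcor0.11} from Proposition \ref{xxpro5.8} together with Backelin's rate criterion for the Koszul property of Veronese subrings. First I would recall that, by Backelin \cite{Ba}, a connected graded algebra $A$ generated in degree $1$ has the property that the $d$-th Veronese $A^{(d)}$ is Koszul for all $d \geq \operatorname{rate}(A)$, where $\operatorname{rate}(A)$ is defined via the degrees of the higher syzygies of ${}_A\Bbbk$; more precisely, $\operatorname{rate}(A) = \sup_{i \geq 1} \frac{\deg \Tor^A_i(\Bbbk,\Bbbk) - 1}{i - 1}$ (or the analogous expression with a $+1$ shift). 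Since $A$ is generated in degree $1$, we have $\deg \Tor^A_i(\Bbbk,\Bbbk) \geq i$ for all $i$, so $\operatorname{rate}(A) < \infty$ is exactly the statement that the quantities $\deg \Tor^A_i(\Bbbk,\Bbbk)$ grow at most linearly in $i$.

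Next I would invoke Proposition \ref{xxpro5.8}: because there is a finite map $T \to A$ with $T$ noetherian connected graded of finite global dimension, $T$ is (up to the standard reductions) AS regular, and the proposition supplies a weight $\xi$ such that $\Torregxi(X) < \infty$ for every $X \in \D^{\bb}_{\fg}(A\Gr)$; in particular $\Torregxi({}_A\Bbbk) < \infty$. Unwinding the definition, $\Torregxi({}_A\Bbbk) = \sup_{i,j}\{\, j - \xi i \mid \Tor^A_i(\Bbbk,\Bbbk)_j \neq 0 \,\} = \sup_i \{\, \deg \Tor^A_i(\Bbbk,\Bbbk) - \xi i \,\} < \infty$. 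This says precisely that $\deg \Tor^A_i(\Bbbk,\Bbbk) \leq \xi i + C$ for some constant $C$ and all $i \geq 0$, i.e.\ the syzygy degrees grow at most linearly with slope $\xi$. Feeding this linear bound into the formula for $\operatorname{rate}(A)$ shows $\operatorname{rate}(A) \leq \max\{\xi, \text{(finitely many initial terms)}\} < \infty$.

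Finally, combining $\operatorname{rate}(A) < \infty$ with Backelin's theorem gives that $A^{(d)}$ is Koszul for all $d \geq \operatorname{rate}(A)$, hence for $d \gg 0$, which is the claim. I would also remark that $A^{(d)}$ inherits the noetherian hypothesis so that "Koszul" is being applied to an algebra in the intended class, and note that passing from "$T$ has finite global dimension" to "$T$ is AS regular" (finite GK dimension, the Gorenstein/purity conditions) is handled exactly as in the setup of Section~\ref{xxsec5}; this bookkeeping, rather than any deep argument, is where care is needed.

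The main obstacle I anticipate is not conceptual but one of matching conventions: making sure the finiteness of $\Torregxi({}_A\Bbbk)$ for a \emph{single} suitable $\xi$ really does translate into the linear-growth hypothesis that Backelin's rate bound requires, and confirming that the weight $\xi$ produced by Proposition \ref{xxpro5.8} can be taken to be a genuine real number (not $+\infty$) under the stated hypotheses. Once the linear bound $\deg\Tor^A_i(\Bbbk,\Bbbk) = O(i)$ is in hand, the Koszulness of high Veronese subrings is immediate from \cite{Ba}.
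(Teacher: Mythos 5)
Your proposal is correct and follows exactly the same route as the paper: Proposition \ref{xxpro5.8} produces a weight $\xi$ with $\Torregxi({}_A\Bbbk)<\infty$, the linear bound $\deg\Tor_i^A(\Bbbk,\Bbbk)\leq \xi i + C$ that this gives is converted into finiteness of $\rate(A)$ (this conversion is precisely Lemma \ref{xxlem5.7}(2), which you re-derive rather than cite), and Backelin's Theorem \ref{xxthm5.6} then gives Koszulness of $A^{(d)}$ for $d\geq\rate(A)$.

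One point of bookkeeping you flagged is actually a non-issue: you do not need to upgrade ``$T$ noetherian connected graded of finite global dimension'' to ``$T$ is AS regular.'' Proposition \ref{xxpro5.8}(1) is stated and proved under exactly the weaker hypothesis appearing in the corollary, and it already yields $\Torregxi({}_A\Bbbk)<\infty$; the AS regular hypothesis is only needed for part (2) of that proposition, which addresses arbitrary $X\in\D^{\bb}_{\fg}(A\Gr)$ via the balanced dualizing complex --- strictly more than you need here. In fact the claim that such a $T$ ``is (up to standard reductions) AS regular'' is not true in general (finite GK dimension, e.g., is not assumed), so it is good that your argument does not actually rest on it.
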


When $A$ is commutative, Corollary \ref{xxcor5.9} recovers a 
very nice result of Mumford \cite[Theorem 1]{Mu2}. If we remove 
the hypothesis that there is a finite map from a noetherian 
connected graded algebra $T$ of finite global dimension to $A$, 
it is an open question if the conclusion of Corollary 
\ref{xxcor0.11} holds, see Question \ref{xxque5.10}. 

The paper is organized as follows.  Section~\ref{xxsec1} recalls 
some basic definitions and properties of homological algebra 
(including local cohomology). In Section~\ref{xxsec2} we provide
the full definitions of the weighted regularities defined above 
in Definition~\ref{xxdef0.2}, and explicitly compute these 
invariants in several examples. Section~\ref{xxsec3} proves 
generalizations of some equalities and inequalities from 
\cite{Jo4, DW} that comprise Theorems \ref{xxthm0.3} and 
\ref{xxthm0.4}. Theorems \ref{xxthm0.6} and \ref{xxthm0.8} on 
weighted AS regularities and the computation of weighted CM 
regularities of graded modules over AS Gorenstein algebras are 
proved in Section~\ref{xxsec4}. In Section~\ref{xxsec5} we 
provide some comments and remarks about related homological 
invariants such as concavity, rate, and slope. 

\section{Preliminaries}
\label{xxsec1}
For an $\mathbb{N}$-graded $\Bbbk$-algebra $A$, we let $A \Gr$ 
denote the category of $\mathbb{Z}$-graded left $A$-modules.
When convenient, we identify the category of graded right 
$A$-modules with $A^{\op}\Gr$ where $A^{\op}$ is the opposite
ring of $A$. The derived category of graded $A$-modules is 
denoted by $\D(A \Gr)$. We use the standard notation 
$\D^+(A \Gr)$, $\D^-(A \Gr)$, and $\D^{\bb}(A \Gr)$ for the 
full subcategories of (cochain) complexes $X=(X^n)$ of 
$\mathbb{Z}$-graded left $A$-modules which are 
bounded below (i.e., $X^n=0$ for all $n\ll 0$), 
bounded above (i.e., $X^n=0$ for all $n\gg 0$), and 
bounded (i.e., $X^n=0$ for all $|n|\gg 0$), respectively. 
We use the 
subscript $\fg$ to denote the full subcategories consisting 
of complexes with finitely generated cohomology, e.g., 
$\D^{\bb}_{\fg}(A \Gr)$. We adopt the standard convention 
that a left $A$-module $M$ can be viewed as a complex 
concentrated in position $0$.

Let $\ell$ be an integer. For a graded $A$-module $M$, the 
shifted $A$-module $M(\ell)$ is defined by
\[ M(\ell)_m = M_{m + \ell}\]
for all $m \in \mathbb{Z}$. For a cochain complex $X = 
(X^n, \;d_X^n: X^n \to X^{n+1})$, we define two notions of 
shifting: $X(\ell)$ shifts the degrees of each graded vector 
space $X(\ell)^i_m = X^i_{m+\ell}$ (together with differential $d^i_{X(\ell)}
=d^i_X$) for all $i, m \in 
\mathbb{Z}$ and $X[\ell]$ shifts the complex $X[\ell]^i = 
X^{i+\ell}$ (together with differential $d^i_{X[\ell]}=(-1)^{\ell}d^i_X$)  
for all $i \in \mathbb{Z}$.

Let $M=\bigoplus_{d\in {\mathbb Z}} M_d$ be a 
${\mathbb Z}$-graded $\Bbbk$-vector space. We say that $M$ 
is {\em locally finite} if $\dim_{\Bbbk} M_d<\infty$ for all 
$d\in {\mathbb Z}$. 

\begin{definition}
\label{xxdef1.1}
Let $A:=\bigoplus_{i\geq 0} A_i$ be a locally finite 
$\mathbb{N}$-graded algebra. The {\it Hilbert series of $A$} 
is defined to be
\begin{equation*}
h_A(t)=\sum_{i\in {\mathbb N}} (\dim_{\Bbbk} A_i)t^i.
\end{equation*}
Similarly, if $M = \bigoplus_{i \in \mathbb{Z}} M_i$ is a 
locally finite $\mathbb{Z}$-graded $A$-module (or 
$\mathbb{Z}$-graded vector space), the \emph{Hilbert series 
of $M$} is defined to be
\begin{equation*}
h_M(t)=\sum_{i\in \mathbb{Z}} (\dim_{\Bbbk} M_i)t^i.
\end{equation*}
\end{definition}

Define the {\it degree} of $M$ to be the maximal degree of 
the nonzero homogeneous elements in $M$, namely, 
\begin{equation}
\label{E1.1.1}\tag{E1.1.1}
\deg(M)=\inf\{d\mid  (M)_{\geq d}= 0\}-1=
\sup\{d\mid  (M)_{d}\neq 0\} \quad \in \quad  
{\mathbb Z} \cup\{\pm \infty\}.
\end{equation}
By convention, we define $\deg (0)=-\infty$. Similarly, we 
define
\begin{equation}
\label{E1.1.2}\tag{E1.1.2}
\ged(M)=\sup\{d\mid  (M)_{\leq d}= 0\}+1=
\inf\{d\mid  (M)_{d}\neq 0\} \quad \in \quad  
{\mathbb Z} \cup\{\pm \infty\}.
\end{equation}
By convention, we define $\ged (0)=\infty$. Now we fix 
$\xi:=(\xi_0,\xi_1)$ a pair of real numbers not both  zero and 
move from the case $(1,\xi)$ considered in the introduction to 
the general case $\xi = (\xi_0,\xi_1)$. We will see in 
Lemma~\ref{xxlem3.1}(1) that if $\xi_0 > 0$, then we can often 
rescale so that $\xi_0 = 1$. For a cochain complex $X$,
the \emph{$\xi$-weighted degree} of $X$ is defined to be
\begin{equation}
\label{E1.1.3}\tag{E1.1.3}
\degxi(X) =\sup_{m,n \in \mathbb{Z}} \{ \xi_0 m+ \xi_1 n \mid   
H^n(X)_m\neq 0\}.
\end{equation}
Similarly, the \emph{$\xi$-weighted $\ged$} of $X$ is defined to 
be
\begin{equation}
\label{E1.1.4}\tag{E1.1.4}
\gedxi(X) =\inf_{m,n \in \mathbb{Z}}
\{ \xi_0 m+ \xi_1 n \mid H^n(X)_m\neq 0\}.
\end{equation}
Recall that the \emph{supremum} and \emph{infimum} of $X$ are defined to be
\[
\sup(X) = \sup_{n \in \mathbb{Z}} \{ n \mid H^n(X) \neq 0\} \quad \text{and} \quad \inf(X) = \inf_{n \in \mathbb{Z}} \{n \mid H^n(X) \neq 0\}.
\]
It is clear that, if $X$ is nonzero in $\D(A\Gr)$, then
\[
\sup(X) = \deg_{(0,1)} (X)  \quad \text{and} \quad \inf(X) = \ged_{(0,1)} (X).
\]
Further,
\[
-\inf (X)=\deg_{(0,-1)} (X) \quad \text{ and } \quad 
-\sup (X)=\ged_{(0,-1)} (X).
\]

If
$$X = \hspace{.2in} \cdots \rightarrow X^{s-1} \rightarrow 
X^s\rightarrow X^{s+1} \rightarrow \cdots,$$
then the brutal truncations of $X$ are denoted by
$$X^{\geq s} := \hspace{.2in} \cdots \to 0\to \cdots \to 0 
\rightarrow X^{s} \rightarrow X^{s+1} \rightarrow \cdots $$
and 
$$X^{\leq s}:= \hspace{.2in}  \cdots \rightarrow X^{s-1} 
\rightarrow X^{s} \rightarrow   0 \to \cdots \to 0\to \cdots.$$

Let $A$ be a connected graded algebra with graded Jacobson 
radical $\fm:=A_{\geq 1}$. Let $\Bbbk$ also denote the graded 
$A$-bimodule $A/\fm$. For a graded left $A$-module $M$, let
\begin{equation}
\label{E1.1.5}\tag{E1.1.5}
t^A_i(_A M)=\deg \Tor^A_i(\Bbbk, M).
\end{equation}
If $M$ is a graded right $A$-module, let
\begin{equation}
\label{E1.1.6}\tag{E1.1.6}
t^A_i(M_A)=\deg \Tor^A_i(M,\Bbbk).
\end{equation}
It is clear that $t^A_i(_A \Bbbk)=t^A_i(\Bbbk_A)$.
If the context is clear, we will use $t^A_i(M)$
instead of $t^A_i(_A M)$ (or $t^A_i(M_A)$). 

For each graded left $A$-module $M$, we define
$$\Gamma_{\fm}(M) 
=\{ x\in M\mid A_{\geq n} x=0 \; {\text{for some $n\geq 1$}}\;\}
=\lim_{n\to \infty} \Hom_A(A/A_{\geq n}, M)$$
and call this the {\it $\fm$-torsion submodule} of $M$. It is 
standard that the functor $\Gamma_{\fm}(-)$ is a left exact 
functor $A\Gr \to A\Gr$. Since the category $A \Gr$ has enough 
injectives, the $i$th right derived functors, denoted by 
$H^i_{\fm}$ or $\R^i\Gamma_{\fm}$, are defined and called the 
{\it local cohomology functors}. Explicitly, one has 
$$H^i_{\fm}(M)=\R^i\Gamma_{\fm}(M)
:=\lim_{n\to \infty} \Ext^i_A(A/A_{\geq n}, M).$$ 
See \cite{AZ, VdB} for more details. For a complex $X$ of 
graded left $A$-modules, the local cohomology functors are 
defined by  
$$H^i_{\fm}(X)=\R^i\Gamma_{\fm}(X)
:=\lim_{n\to \infty} \Ext^i_A(A/A_{\geq n}, X).$$ 

\begin{definition}
\label{xxdef1.2} 
Let $A$ be a connected graded noetherian algebra. Let $M$ be a 
finitely generated graded left  $A$-module. We call $M$ 
{\it $s$-Cohen--Macaulay} or simply {\it Cohen--Macaulay} if 
$H^i_{\fm}(M) = 0$ for all $i \neq s$ and $H^s_{\fm}(M) \neq 0$. 
We say $A$ is {\it Cohen--Macaulay} if $_AA$ is Cohen--Macaulay.
\end{definition}

Throughout the rest of this paper, we assume the following 
hypothesis; we refer the reader to \cite{Ye} for the definitions 
of a dualizing complex and a balanced dualizing complex.  

\begin{hypothesis}
\label{xxhyp1.3}
Let $A$ be a noetherian connected graded algebra with balanced 
dualizing complex. In this case by \cite[Theorem 6.3]{VdB} the 
balanced dualizing complex will be given by $\R\Gamma_\fm(A)'$, 
where $'$ represents the graded vector space dual.
\end{hypothesis}

The local cohomological dimension of a graded $A$-module 
$M$ is defined to be
$$\lcd(M) :=\sup\{i \in \mathbb{Z} \mid 
H^i_{\mathfrak{m}}(M) \neq 0\}$$
and the cohomological dimension of $\Gamma_\fm$ is defined 
to be
$$\cd(\Gamma_\fm) = \sup_{M \in A \Gr} \{\lcd(M)\}.$$

We will use the following {\it Local Duality Theorem} of 
Van den Bergh several times.

\begin{theorem} 
\label{xxthm1.4}
Let $A$ be a noetherian connected graded $\Bbbk$-algebra with
$\cd(\Gamma_\fm)< \infty$ and let $C$ be a connected graded 
algebra. 
\begin{enumerate}
\item[(1)]\cite[Theorem 5.1]{VdB} 
For any $X \in \D((A\otimes C^{\op})\Gr)$ there is an isomorphism
$$\R\Gamma_\fm(X)' \cong \RHom_A(X,\R\Gamma_\fm(A)')$$
in $\D((C\otimes A^{\op})\Gr)$.
\item[(2)] \cite[Observation 2.3]{Jo4}
Assume Hypothesis \ref{xxhyp1.3}.
If $X\in \D^{\bb}_{\fg}(A\Gr)$, then $\R\Gamma_\fm(X)'
\in \D^{\bb}_{\fg}(A^{\op}\Gr)$.
\end{enumerate}
\end{theorem}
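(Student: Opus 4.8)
The plan is to deduce both parts from Van den Bergh's local duality theorem via the tensor--hom adjunction; as already flagged in the statement, the two assertions are essentially restatements of \cite[Theorem 5.1]{VdB} and \cite[Observation 2.3]{Jo4}, so the task is to assemble them with the correct bimodule bookkeeping rather than to prove anything new.

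For part (1) I would proceed in three steps. First, under the hypothesis $\cd(\Gamma_{\fm})<\infty$ the functor $\Gamma_{\fm}$ has the pointwise description $\Gamma_{\fm}(M)=\varinjlim_n \Hom_A(A/A_{\geq n},M)$; since a filtered colimit of exact functors is exact and the cohomological dimension is finite, this identification passes to derived functors and yields a functorial isomorphism $\R\Gamma_{\fm}(X)\cong \R\Gamma_{\fm}(A)\otimes^{\mathbf{L}}_{A}X$ in $\D((A\otimes C^{\op})\Gr)$, where $\R\Gamma_{\fm}(A)$ is represented by a complex of graded $A$-bimodules --- this is exactly \cite[Theorem 5.1]{VdB}. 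Second, apply the graded $\Bbbk$-dual $(-)'$: since $\Bbbk$ is a field this functor is exact, so it commutes with cohomology, and the derived tensor--hom adjunction gives $(\R\Gamma_{\fm}(A)\otimes^{\mathbf{L}}_{A}X)'\cong \RHom_A(X,\R\Gamma_{\fm}(A)')$. Third, track the actions: $X$ is a left $A$-, right $C$-complex and $\R\Gamma_{\fm}(A)$ is an $A$-bimodule complex, so $\R\Gamma_{\fm}(A)\otimes^{\mathbf{L}}_{A}X$ is a left $A$-, right $C$-complex; dualizing interchanges the two sides, so $\RHom_A(X,\R\Gamma_{\fm}(A)')$ is a left $C$-, right $A$-complex, i.e. an object of $\D((C\otimes A^{\op})\Gr)$, as claimed.

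For part (2) I would specialize part (1) to $C=\Bbbk$, obtaining $\R\Gamma_{\fm}(X)'\cong \RHom_A(X,R)$ with $R:=\R\Gamma_{\fm}(A)'$, which by Hypothesis~\ref{xxhyp1.3} is a balanced dualizing complex for $A$. It then remains only to invoke the defining property of a dualizing complex: $\RHom_A(-,R)$ restricts to a contravariant duality between $\D^{\bb}_{\fg}(A\Gr)$ and $\D^{\bb}_{\fg}(A^{\op}\Gr)$, so in particular $\RHom_A(X,R)\in \D^{\bb}_{\fg}(A^{\op}\Gr)$ whenever $X\in \D^{\bb}_{\fg}(A\Gr)$; this is \cite[Observation 2.3]{Jo4}. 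The one genuinely delicate point in the whole argument is the first step of (1) --- promoting the pointwise formula for $\Gamma_{\fm}$ to the derived isomorphism $\R\Gamma_{\fm}(X)\cong\R\Gamma_{\fm}(A)\otimes^{\mathbf{L}}_{A}X$ with all bimodule structures intact, which is where finite cohomological dimension is used to commute the derived functor past the filtered colimit (via a K-injective resolution, or a way-out-functor comparison). Since that is precisely \cite[Theorem 5.1]{VdB}, I would cite it rather than reproduce it, and everything else is formal.
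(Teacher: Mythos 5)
Your proposal is correct and follows essentially the same route as the paper: part (1) is simply invoked from Van den Bergh's Theorem 5.1 (your expansion into tensor formula plus adjunction is exactly how that result is proved, but you sensibly cite it rather than reprove it), and part (2) applies part (1), identifies $R=\R\Gamma_\fm(A)'$ as the balanced dualizing complex via Hypothesis~\ref{xxhyp1.3}, and then invokes the duality property $\RHom_A(-,R)\colon\D^{\bb}_{\fg}(A\Gr)\to\D^{\bb}_{\fg}(A^{\op}\Gr)$ --- this is precisely the paper's appeal to \cite[Proposition 3.4]{Ye}.
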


\begin{proof} (2) By \cite[Theorem 6.3]{VdB}, $R:=\R\Gamma_\fm(A)'$
is the balanced dualizing complex over $A$. By a basic property 
of a balanced dualizing complex \cite[Proposition 3.4]{Ye}, 
$\RHom_A(X,R)\in \D^{\bb}_{\fg}(A^{\op}\Gr)$. By part (1), 
$\R\Gamma_\fm(X)' \cong \RHom_A(X,R)$, and so the assertion
follows.
\end{proof}

\section{Weighted versions of homological regularities}
\label{xxsec2}
In this section we introduce weighted versions of several 
homological invariants in the noncommutative setting and 
provide some sample computations of these regularities.
Castelnuovo--Mumford regularity in the noncommutative 
setting was first studied by J{\o}rgensen in \cite{Jo3, Jo4} 
and later by Dong and Wu \cite{DW}. Throughout, we fix an 
ordered pair $\xi=(\xi_0,\xi_1)$ of real numbers. In the 
introduction, we identified $\xi$ with $(1,\xi)$.

\begin{definition}
\label{xxdef2.1}
Let $X$ be a nonzero object in $\D^{\bb}_{\fg}(A\Gr)$. 
\begin{enumerate}
\item[(1)] 
The {\it $\xi$-Castelnuovo--Mumford regularity} of $X$ is 
defined to be
$$\CMregxi (X)= \degxi (\R\Gamma_{\fm}(X)).$$
If $\xi_0\neq 0$, then it is clear that
$$\CMregxi (X)=\sup_{i \in \mathbb{Z}} 
\{\xi_0 \deg (H^{i}_{\fm}(X)) +\xi_1 i \}.$$
\item[(2)]
If $\xi=(1,1)$, then $\CMregxi(X)$ becomes the ordinary 
$\CMreg(X)$ as defined in \cite[Definition 2.1]{Jo4}.
\end{enumerate}
\end{definition}

By Theorem 
\ref{xxthm1.4}(2), if $0 \neq X \in \D^{\bb}_{\fg} (A \Gr)$ 
then $\R\Gamma_{\fm}(X) \ncong 0$ and $\R\Gamma_{\fm}(X)' \in  
\D^{\bb}_{\fg} (A^{\op} \Gr)$. It follows that, if 
$\xi_0\geq 0$, then $\CMregxi(X)$ is finite. However, if $A$ 
does not have a balanced dualizing complex then $\CMregxi(A)$ 
and $\CMregxi(X)$ can be infinite (e.g. \cite [Example 5.1]{KWZ2}).

\begin{example}
\label{xxex2.2} 
Suppose that $\xi_0 \geq 0$. 
\begin{enumerate}
\item[(1)]
If $M$ is a finite-dimensional nonzero graded left $A$-module, 
then 
$$\CMregxi(M)=\xi_0 \deg (M).$$
A more general case is considered in part (4).
\item[(2)] 
Let $A$ be an AS Gorenstein algebra of type $(d, \bfl)$. Then
$\CMregxi (A) =\xi_1 d- \xi_0 \bfl$. 
\item[(3)]
Let $A$ be an AS regular algebra of type $(d, \bfl)$. 
By \cite[Proposition 3.1]{StZ}, when regarded as a rational function, $\deg_t h_A(t) 
= -\bfl$. Hence,
$$\CMregxi (A) =\xi_1 d-\xi_0 \bfl
=\xi_1 \gldim A+ \xi_0 \deg_t  h_A(t).$$
\item[(4)]
If $M$ is $s$-Cohen--Macaulay, then, by definition,
$$\CMregxi (M)=\xi_1 s+ \xi_0 \deg(H^s_{\fm}(M)).$$
\item[(5)]
If $\xi=(0,1)$ and $X \in \D^{\bb}_{\fg} (A \Gr)$, then 
$$\CMreg_{(0,1)}(X)=\sup(\R\Gamma_{\fm}(X)).$$
\item[(6)]
Recall from \cite{Jo1} that the depth of a complex $X$ is 
defined to be
$$\depth(X):=\inf(\RHom_A(\Bbbk, X)).$$ 
By \cite[Lemma 2.6]{DW}, $\depth(X)=
\inf(\R\Gamma_{\fm}(X))$.
If $\xi=(0,-1)$ and $X \in \D^{\bb}_{\fg} (A \Gr)$, then 
$$\CMreg_{(0,-1)}(X)=-\inf(\R\Gamma_{\fm}(X))
=-\depth(X).$$
\end{enumerate}
\end{example}

\begin{definition}
\label{xxdef2.3}
Let $X$ be a nonzero object in $\D^{\bb}_{\fg}(A\Gr)$. The 
{\it $\xi$-Ext-regularity} of $X$ is defined to be
\begin{align*}
\Extregxi (X)&=-\gedxi (\R \Hom_A(X, \Bbbk))\\
&= - \inf_{i \in \mathbb{Z}} 
\{\xi_0 \ged (\Ext^i_A(X, \Bbbk))+\xi_1  i\},
\end{align*}
where the second equality holds when $\xi_0\neq 0$. 
The {\it Tor-regularity} of $X$ is defined to be
$$\begin{aligned}
\Torregxi (X)&=\degxi( \Bbbk \otimes^{L}_{A} X) \\
&=\sup_{i \in \mathbb{Z}}
\{\xi_0 \deg (\Tor^A_{-i}( \Bbbk, X)) +\xi_1 i \}\\
&=\sup_{i \in \mathbb{Z}}
\{\xi_0 \deg (\Tor^A_{i}( \Bbbk, X)) -\xi_1 i \}
\end{aligned}$$
where the second equality holds when $\xi_0\neq 0$.
\end{definition}

By \cite[Remark 4.5]{DW}, if $X$ has a finitely generated
minimal free resolution over $A$, then 
$\Extregxi(X)=\Torregxi(X)$.

\begin{example}
\label{xxex2.4}
The following examples are clear.
\begin{enumerate}
\item[(1)]
If $M \in A \Gr$ and $r=\Torregxi(M)$ and $\xi_0>0$, then 
$$t^A_i(_A M):=\deg(\Tor^A_i(\Bbbk, M))\leq 
\xi_0^{-1}(r+\xi_1 i)$$
for all $i$.
\item[(2)] 
$\Torregxi(A)=\Extregxi(A)=0$.
\item[(3)]
Suppose $X\in \D^{\bb}_{\fg}(A\Gr)$. If $\xi=(0, -1)$, 
then 
$$\Torreg_{(0,-1)}(X)=\pdim(X)$$
where $\pdim$ denotes the projective dimension.
\item[(4)]
Suppose $\xi_0>0$. 
Let $A$ be a Koszul algebra as defined in the introduction 
\eqref{E0.0.1}, and let $g = \gldim A$. By definition, 
$\deg \Tor^i_{A}(\Bbbk,\Bbbk)= i$ for all $0\leq i\leq g$ 
and $-\infty$ otherwise. This implies that
\begin{equation}
\label{E2.4.1}\tag{E2.4.1}
\Torregxi(_A\Bbbk)=\begin{cases}
0 & \xi_1\geq \xi_0\\
+\infty & {\text{$\xi_1 <\xi_0$ and $g=\infty$}},\\
g(\xi_0-\xi_1) & 
{\text{$\xi_1 <\xi_0$ and $g<\infty$}}.
\end{cases}
\end{equation}
\item[(5)]
Suppose $\xi_0>0$ and $\xi_1<\xi_0$. Let $g = \gldim A$. 
Since $\deg \Tor^i_{A}(\Bbbk,\Bbbk)\geq i$ for all $0\leq i
\leq g$, we obtain 
$$\Torregxi(_A\Bbbk)=\begin{cases}
+\infty & g=\infty,\\
\deg \Tor^g_{A}(\Bbbk,\Bbbk) \xi_0-g\xi_1 & g<\infty.
\end{cases}$$
\end{enumerate}
\end{example}

Using the weighted versions of the Tor (or Ext) and CM 
regularities, we define a weighted version of the 
Artin-Schelter regularity of a connected graded algebra $A$, 
extending the unweighted ($\xi = (1,1)$) version, which was 
introduced in \cite{KWZ2}.

\begin{definition}
\label{xxdef2.5}
The $\xi$-{\it Artin--Schelter regularity} (or {\it $\xi$-AS 
regularity}) of a connected graded algebra $A$ is
$$\ASregxi(A):=\Torregxi(\Bbbk)+ \CMregxi(A).$$
\end{definition}

If there exists a noetherian AS regular algebra $T$ and a 
finite map $T \to A$, then it follows from 
Proposition~\ref{xxpro5.8} that there is a $\xi$ such that 
$\ASregxi(A)<\infty$. 

\begin{remark}
\label{xxrem2.6}
\begin{enumerate}
\item[(1)]
When $\xi_0 > 0$, it follows from Theorem \ref{xxthm3.3} below 
and Example~\ref{xxex2.4}(2) that $\Torregxi(_A \Bbbk)\geq 
-\CMregxi(A)$, or equivalently $\ASregxi(A)\geq 0$. Let $T$ 
be a non-Koszul AS regular algebra of global dimension 3
that is generated in degree 1. Then $T$ is of type $(3,4)$ 
and 
$$t^T_i(\Bbbk)=\begin{cases} 0, & i=0,\\
1, & i=1,\\
3, & i=2,\\
4, & i=3.
\end{cases}
$$
By Example \ref{xxex2.2}(3), $\CMregxi(T)=-4 \xi_0+ 3 \xi_1$ 
and it is easy to check that, if $\xi_0=1$, then
$$\Torregxi(\Bbbk)=\max\{0, 1-\xi_1, 3-2\xi_1, 4-3\xi_1\}
=\begin{cases} 4-3\xi_1 & \xi_1\leq 1,\\
3-2\xi_1, & 1\leq \xi_1\leq 1.5,\\
0, & 1.5\leq \xi_1.
\end{cases}
$$
As a consequence, $\Torregxi(_T \Bbbk)=-\CMregxi(T)$ if 
$\xi_1\leq 1$ and $\Torregxi(_T \Bbbk)> -\CMregxi(T)$ 
if $\xi_1>1$. 
Note that for any $\xi$, by Example \ref{xxex2.2}(1),
$$\CMregxi(\Bbbk)=0.$$
So, if $\xi_1>1$, Theorem \ref{xxthm0.4}
(or Theorem \ref{xxthm3.10}) fails even when $X=\Bbbk$.
\item[(2)]
Let $T$ be as in part (1) and let $B=T[x_1,
\cdots,x_n]$ for some integer $n\geq 1$, where $\deg x_s=1$
for all $1\leq s\leq n$. 
Then $B$ is of type $(3+n,4+n)$ 
and 
$$t^B_i(_B\Bbbk)=\begin{cases} 0, & i=0,\\
1, & i=1,\\
i+1, & 2\leq i\leq n+3.
\end{cases}
$$
By Example \ref{xxex2.2}(4), 
$\CMregxi(B)=-(n+4) \xi_0+ (n+3)\xi_1$ 
and it is easy to check that, if $\xi_0=1$, then
$$\Torregxi(\Bbbk_B)
=\begin{cases} (n+4)-(n+3)\xi_1 & \xi_1\leq 1,\\
3-2\xi_1, & 1\leq \xi_1\leq 1.5,\\
0, & 1.5\leq \xi_1.
\end{cases}
$$
Similarly, $\Torregxi(\Bbbk_B)> -\CMregxi(B)$ if $\xi_1>1$.
\item[(3)]
Suppose $\xi_0=1$ and $\xi_1\leq 1$. Let $T$ be a noetherian 
AS regular algebra. By \cite[(3--4), p.1600]{StZ},
$t^T_{i-1}(\Bbbk)<t^T_{i}(\Bbbk)$ for all $1\leq i \leq 
d:=\gldim T$. Since each $t^T_i(\Bbbk)$ is an integer, we 
have $t^T_{i-1}(\Bbbk)-(i-1)\leq t^T_{i}(\Bbbk)-i$. Since 
$\xi_1\leq 1$, we obtain that
$$t^T_{i-1}(\Bbbk)-\xi_1 (i-1)\leq t^T_{i}(\Bbbk)-\xi_1 i$$
for all $1\leq i\leq d$. This implies that
$$\Torregxi(\Bbbk)=t^T_{d}(\Bbbk)-\xi_1 d=\bfl -\xi_1 d$$
by \cite[Proposition 3.1(4)]{StZ}. By Example \ref{xxex2.2}(3), 
$$\Torregxi(_T\Bbbk)=-\CMregxi(T),$$ 
or equivalently
$$\ASregxi(T)=0$$
(compare to Theorem \ref{xxthm0.6}).
\item[(4)]
Let $\xi=(1,\xi_1)$. Let $A=\Bbbk[x]$ with $\deg (x)=2$. Then 
$A$ is of type $(1,2)$ and $\deg \Tor^1_A(\Bbbk,\Bbbk)=2$. We 
have 
$$\CMregxi(A)=-2+\xi_1$$
and
$$\Torregxi(\Bbbk)=\begin{cases} 
2-\xi_1 & \xi_1\leq 2,\\
0 & \xi_1>2.
\end{cases}$$
As a consequence,
$$\ASregxi(\Bbbk)=\begin{cases} 
0 & \xi_1\leq 2,\\
-2+\xi_1 & \xi_1>2.
\end{cases}$$
\end{enumerate}
\end{remark}

The following is a generalization of \cite[Lemma 2.3]{KWZ2}. 

\begin{lemma}
\label{xxlem2.7}
Assume that $\xi_0>0$. Suppose that $T$ and $A$ are connected 
graded algebras. Then
\[ \Torregxi({}_{T \otimes A} \Bbbk) =
\Torregxi({}_{T} \Bbbk) + \Torregxi({}_{A} \Bbbk).\]
\end{lemma}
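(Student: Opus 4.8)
The plan is to reduce the statement to the well-known Künneth-type behavior of the $\Tor$ of trivial modules over a tensor product of connected graded algebras. The key structural fact is that $\Bbbk_{T\otimes A}$ admits, as its minimal free resolution, the (completed) tensor product of a minimal free resolution $P_\bullet \to \Bbbk_T$ and a minimal free resolution $Q_\bullet \to \Bbbk_A$; equivalently, on the level of derived functors one has $\Tor^{T\otimes A}_n(\Bbbk,\Bbbk) \cong \bigoplus_{p+q=n}\Tor^T_p(\Bbbk,\Bbbk)\otimes_{\Bbbk}\Tor^A_q(\Bbbk,\Bbbk)$ as graded vector spaces, where the grading on the right is the total internal grading. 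This is exactly the content behind \cite[Lemma 2.3]{KWZ2} in the unweighted case, and the resolution-level statement is what I would cite or reprove in one line.

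Granting that isomorphism, I would argue at the level of weighted degrees. Write $t^{T\otimes A}_n := \deg\Tor^{T\otimes A}_n(\Bbbk,\Bbbk)$ and similarly $t^T_p$, $t^A_q$. Since the internal degree on $\Tor^T_p(\Bbbk,\Bbbk)\otimes_\Bbbk \Tor^A_q(\Bbbk,\Bbbk)$ is the sum of the two internal degrees, and since the tensor product of two nonzero graded vector spaces is nonzero with top degree the sum of the two top degrees, the Künneth decomposition gives
\[
t^{T\otimes A}_n = \max_{p+q=n}\,\bigl(t^T_p + t^A_q\bigr),
\]
with the convention that a summand is omitted when the corresponding $\Tor$ vanishes (consistent with $\deg(0)=-\infty$). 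Now I plug this into the defining formula
\[
\Torregxi({}_{T\otimes A}\Bbbk)
= \sup_{n\ge 0}\bigl\{\xi_0\,t^{T\otimes A}_n - \xi_1 n\bigr\},
\]
valid because $\xi_0>0$ (Definition \ref{xxdef2.3}). Substituting and reindexing by $(p,q)$ with $n=p+q$, and using $\xi_1 n = \xi_1 p + \xi_1 q$, the double supremum separates:
\[
\sup_{p,q\ge 0}\bigl\{\xi_0(t^T_p+t^A_q) - \xi_1 p - \xi_1 q\bigr\}
= \sup_{p\ge 0}\bigl\{\xi_0 t^T_p - \xi_1 p\bigr\} + \sup_{q\ge 0}\bigl\{\xi_0 t^A_q - \xi_1 q\bigr\},
\]
which is precisely $\Torregxi({}_T\Bbbk) + \Torregxi({}_A\Bbbk)$. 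The separation of the sup over a sum is legitimate because each variable ranges freely and the expression is additively separated; one only needs to note that if either factor-regularity is $+\infty$ then so is the left side, and if a term $t^T_p = -\infty$ it contributes nothing, matching the convention on the other side.

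**Main obstacle.** The genuinely substantive point — everything else is bookkeeping with $\sup$'s — is justifying the Künneth isomorphism $\Tor^{T\otimes A}_\bullet(\Bbbk,\Bbbk)\cong \Tor^T_\bullet(\Bbbk,\Bbbk)\otimes_\Bbbk\Tor^A_\bullet(\Bbbk,\Bbbk)$ in the graded, possibly-noncommutative, possibly-infinite-global-dimension setting, and making sure the tensor product of resolutions really is a resolution of $\Bbbk$ over $T\otimes A$ (acyclicity of $P_\bullet\otimes_\Bbbk Q_\bullet$, and that it computes $\Bbbk\otimes^{L}_{T\otimes A}\Bbbk$). Since $\Bbbk$ is a field this is the usual flat-base-change / Künneth spectral sequence argument, which degenerates because $\Bbbk$ is a field, and it is already encapsulated in \cite[Lemma 2.3]{KWZ2}; I would simply invoke that lemma's proof verbatim, observing that it never used $\xi$, and then the weighted statement follows formally as above. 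A secondary, purely notational caution is to handle the edge case where one of $T$, $A$ is $\Bbbk$ itself (then one side of the claimed identity degenerates to $0$, consistent with $\Torregxi({}_\Bbbk\Bbbk)=0$), and the case $\Torregxi=+\infty$, both of which are immediate from the displayed sup-formula.
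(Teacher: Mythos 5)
Your proof is correct and takes essentially the same route as the paper's: both rely on the K\"unneth isomorphism $\Tor^{T\otimes A}_n(\Bbbk,\Bbbk)\cong\bigoplus_{p+q=n}\Tor^T_p(\Bbbk,\Bbbk)\otimes_\Bbbk\Tor^A_q(\Bbbk,\Bbbk)$ (the paper derives it directly by tensoring minimal resolutions and citing Rotman's K\"unneth theorem, exactly as you sketch), and then separate the supremum over $p+q=n$ into independent suprema over $p$ and $q$. Your explicit handling of the $\deg(0)=-\infty$ convention and the $+\infty$ edge cases is a harmless elaboration of what the paper leaves implicit.
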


\begin{proof}
Let $P$ be a projective resolution of $\Bbbk$ as a right 
$T$-module and let $Q$ be a projective resolution of $\Bbbk$ as 
a right $A$-module. Let $X$ and $Y$ denote the complexes given 
by tensoring $P$ and $Q$ with ${}_T \Bbbk$ and ${}_A \Bbbk$ 
respectively. Then $\Tor_i^T(\Bbbk, \Bbbk)$ and 
$\Tor_i^A(\Bbbk, \Bbbk)$ can be computed by taking homology of 
$X$ and $Y$, respectively. Further, 
$\Tor_i^{T \otimes A}(\Bbbk, \Bbbk)$ can be computed by taking 
homology of the complex $X \otimes Y$. By the K\"{u}nneth formula 
(see, e.g. \cite[Theorem 10.8.1]{Ro}), we have
$$\begin{aligned}
\bigoplus_{p+q = n} \Tor_p^T(\Bbbk,\Bbbk) \otimes 
\Tor_q^A(\Bbbk,\Bbbk) &\cong 
\bigoplus_{p+q = n} H_p(X) \otimes H_q(Y) \\
&\cong H_n(X \otimes Y) \cong \Tor_n^{T \otimes A}(\Bbbk,\Bbbk)
\end{aligned}
$$
where $\otimes$ stands for $\otimes_{\Bbbk}$. Therefore,
$$\begin{aligned}
\Torregxi ({}_{T \otimes A} \Bbbk )
&=\sup_{n \in \mathbb{Z}}\{ \xi_0 \deg (\Tor^{T \otimes A}_n
   (\Bbbk, \Bbbk)) -\xi_1 n \} \\
&=\sup_{p, q \in \mathbb{Z}}\{ \xi_0 \deg (\Tor^{T}_{p}
  (\Bbbk, \Bbbk)) + \xi_0 \deg(\Tor^{A}_{q}(\Bbbk, \Bbbk)) 
	-\xi_1 (p+q) \} \\
&=\sup_{p \in \mathbb{Z}}\{ \xi_0 \deg (\Tor^{T}_{p}
  (\Bbbk, \Bbbk)) -\xi_1 p \} + \sup_{ q \in \mathbb{Z}}
	\{  \xi_0 \deg(\Tor^{A}_{q}(\Bbbk, \Bbbk)) -\xi_1 q\} \\
&= \Torregxi({}_T\Bbbk) + \Torregxi({}_A \Bbbk),
\end{aligned}$$
as desired.
\end{proof}

\begin{example}
\label{xxex2.8}
Let $\xi_0=1$.

Let $T$ be the noetherian AS regular algebra given in Remark 
\ref{xxrem2.6}(1). Then 
$$\Torregxi(_T \Bbbk)=\max\{0, 1-\xi_1, 3-2\xi_1, 4-3\xi_1\}
=\begin{cases} 4-3\xi_1 & \xi_1\leq 1,\\
3-2\xi_1, & 1\leq \xi_1\leq 1.5,\\
0, & 1.5\leq \xi_1.
\end{cases}
$$

Let $A$ be an affine (commutative) noetherian Koszul algebra that has 
infinite global dimension. Then 
$$\Torregxi(_A \Bbbk)=\max_{n\in {\mathbb N}}\{n-n\xi\}
=\begin{cases} \infty & \xi_1>1,\\
0& \xi_1 \leq 1.
\end{cases}
$$

By Lemma \ref{xxlem2.7}, we obtain that
$$\Torregxi(_{A\otimes T} \Bbbk)
=\begin{cases} \infty & \xi_1>1,\\
4-3\xi_1 & \xi_1\leq 1.
\end{cases}
$$
\end{example}

\section{Equalities and inequalities}
\label{xxsec3}

In this section we study the relationships between the 
weighted regularities defined in the previous section, 
generalizing results of J{\o}rgensen, Dong, and Wu
\cite{Jo3, Jo4, DW} and proving Theorem \ref{xxthm0.3}.
In this section we 
fix a pair of real numbers $\xi = (\xi_0, \xi_1)$.

Recall that for a cochain complex $X$ and $\ell \in \mathbb{Z}$, 
the complex $X(\ell)$ shifts the degrees of each graded vector 
space $X(\ell)^i_m = X^i_{m+\ell}$ while the complex $X[\ell]$ 
shifts the complex $X[\ell]^i = X^{i+\ell}$. 

\begin{lemma}
\label{xxlem3.1} 
Let $A$ be a noetherian connected graded algebra and $X$ be a 
nonzero complex of graded left $A$-modules. The following 
statements hold.
\begin{enumerate}
\item[(1)]
Let $\lambda>0$ and $\xi'=\lambda \xi$. Then 
$$\deg_{\xi'}(X)=\lambda \degxi(X).$$
The above equation also holds if $\deg$ is replaced with 
$\ged$, $\CMreg$, $\Extreg$, or $\Torreg$.
\item[(2)]
Suppose $\degxi(X)$ is finite. Then 
$$\degxi(X[1])=\degxi(X)-\xi_1 \quad {\text{and}}
\quad \degxi(X(1))=\degxi(X)-\xi_0.$$ 
Similar equations hold if $\deg$ is replaced with $\ged$, $\CMreg$, $\Extreg$, or $\Torreg$.
\item[(3)]
Suppose $\degxi(X)$ is finite. Assume that $\xi_0$ and $\xi_1$ 
are nonzero such that $\xi_0^{-1}\xi_1$ is irrational. Then the
numbers in the collection 
$$\left\{\degxi(X[m](n))\right\}_{m, n \in \mathbb{Z}}$$ 
are distinct. This assertion holds if $\deg$ is replaced with $\ged$, $\CMreg$, $\Extreg$, or $\Torreg$.
\end{enumerate}
In the following parts, we assume that 
$\{\xi_n:=(\xi_{0,n},\xi_{1,n})\}_{n\geq 1}$ 
is a sequence of 
pairs such that $\lim\limits_{n\to\infty} \xi_n=\xi$. In parts 
{\rm{(4)}} and {\rm{(5)}}, we further assume that $\xi_0>0$.
\begin{enumerate}
\item[(4)]
Suppose $X \in \D^{\bb}(A \Gr)$. If $\deg H^j(X) < \infty$ 
for all $j$, then 
$$\lim\limits_{n\to\infty} \deg_{\xi_{n}}(X)=\degxi(X).$$ 
Similarly, if $\ged H^j(X) > -\infty$ for all $j$,
then 
$$\lim\limits_{n\to\infty} \ged_{\xi_{n}}(X)=\gedxi(X).$$
\item[(5)] 
If $Y$ is a nonzero object in $\D^{\bb}_{\fg}(A \Gr)$, then
$\lim\limits_{n\to\infty} \CMreg_{\xi_{n}}(Y)=\CMregxi(Y)$.
\item[(6)] 
If $Y$ is a nonzero object in $\D^{\bb}_{\fg}(A \Gr)$ of finite 
projective dimension, then 
$\lim\limits_{n\to\infty} \Torreg_{\xi_{n}}(Y)=\Torregxi(Y)$ and
$\lim\limits_{n\to\infty} \Extreg_{\xi_{n}}(Y)=\Extregxi(Y)$.
\end{enumerate}
\end{lemma}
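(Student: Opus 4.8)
The plan is to treat (1)--(3) by direct manipulation of the defining extrema, and to obtain (4)--(6) by reducing the weighted degree of the relevant complex to a finite maximum (or minimum) of affine functions of $\xi$ and then letting $\xi_n\to\xi$.

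For (1): $\deg_{\lambda\xi}(X)=\sup\{\lambda\xi_0 m+\lambda\xi_1 n\mid H^n(X)_m\neq 0\}=\lambda\,\degxi(X)$ when $\lambda>0$, and likewise for $\ged_\xi$; since $\CMregxi$, $\Extregxi$, $\Torregxi$ are each $\pm\degxi$ or $\pm\gedxi$ of one of $\R\Gamma_{\fm}(X)$, $\Bbbk\otimes^{L}_{A} X$, $\RHom_A(X,\Bbbk)$, homogeneity passes to them. For (2): from $H^n(X[1])=H^{n+1}(X)$ and $H^n(X(1))_m=H^n(X)_{m+1}$ one reads off $\degxi(X[1])=\degxi(X)-\xi_1$ and $\degxi(X(1))=\degxi(X)-\xi_0$ (finiteness of $\degxi(X)$ rules out $\infty-\infty$), and for the regularity variants one uses that $\R\Gamma_{\fm}(-)$, $\Bbbk\otimes^{L}_{A}(-)$ and $\RHom_A(-,\Bbbk)$ each commute with the internal shift $(1)$ and with the cohomological shift $[1]$ up to the usual reindexing --- in particular $\RHom_A(X[1],\Bbbk)\cong\RHom_A(X,\Bbbk)[-1]$ and $\RHom_A(X(1),\Bbbk)\cong\RHom_A(X,\Bbbk)(-1)$, which is exactly what makes the signs come out right. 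For (3): iterating (2) gives $\degxi(X[m](n))=\degxi(X)-m\xi_1-n\xi_0$, so two such values coincide iff $(m-m')\xi_1=(n'-n)\xi_0$; dividing by $\xi_0$ forces $(m-m')\xi_0^{-1}\xi_1\in\mathbb Z$, hence $m=m'$ and then $n=n'$, precisely because $\xi_0^{-1}\xi_1\notin\mathbb Q$. All three computations are unchanged with $\deg$ replaced throughout by $\ged$, $\CMreg$, $\Extreg$ or $\Torreg$.

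Part (4) is the crux. Since $\xi_{0,n}\to\xi_0>0$ we may discard finitely many terms and assume $\xi_{0,n}>0$ for all $n$. When $\xi_0>0$ and $\deg H^j(X)<\infty$ for every $j$, the inner supremum over internal degrees is attained, so
\[
\degxi(X)=\max\{\xi_0\deg H^j(X)+\xi_1 j\mid H^j(X)\neq 0\},
\]
a maximum over a \emph{finite} index set because $X\in\D^{\bb}(A\Gr)$. Each term $\xi_0\deg H^j(X)+\xi_1 j$ is an affine function of $\xi=(\xi_0,\xi_1)$, so $\xi\mapsto\deg_\xi(X)$ is a finite maximum of continuous functions, hence continuous; evaluating along $\xi_n\to\xi$ yields $\lim_n\deg_{\xi_n}(X)=\degxi(X)$. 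The $\ged$ statement is identical with $\max$ replaced by $\min$ and $\deg H^j(X)<\infty$ replaced by $\ged H^j(X)>-\infty$.

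Parts (5) and (6) follow by feeding the right complex into (4). For (5) take $X=\R\Gamma_{\fm}(Y)$: under Hypothesis~\ref{xxhyp1.3} we have $\cd(\Gamma_{\fm})<\infty$, so $\R\Gamma_{\fm}(Y)$ is bounded, and by Theorem~\ref{xxthm1.4}(2) its graded dual lies in $\D^{\bb}_{\fg}(A^{\op}\Gr)$, so each $H^j_{\fm}(Y)$ is bounded above and $\deg H^j(\R\Gamma_{\fm}(Y))<\infty$; then (4) gives $\CMreg_{\xi_n}(Y)=\deg_{\xi_n}(\R\Gamma_{\fm}(Y))\to\degxi(\R\Gamma_{\fm}(Y))=\CMregxi(Y)$. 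For (6), finite projective dimension together with $Y\in\D^{\bb}_{\fg}(A\Gr)$ means $Y$ is represented by a bounded complex of finitely generated free $A$-modules, so both $\Bbbk\otimes^{L}_{A}Y$ and $\RHom_A(Y,\Bbbk)$ are bounded complexes of finite-dimensional graded spaces; applying the $\deg$-half of (4) to $\Bbbk\otimes^{L}_{A}Y$ gives the $\Torreg$ statement and the $\ged$-half to $\RHom_A(Y,\Bbbk)$ gives the $\Extreg$ statement, via $\Extregxi(Y)=-\gedxi(\RHom_A(Y,\Bbbk))$. The single genuine obstacle is this finiteness: the weighted degree must be an extremum over a \emph{finite} set of homological degrees in order to vary continuously in $\xi$, which is why the balanced-dualizing-complex hypothesis (through Theorem~\ref{xxthm1.4}(2)) is needed in (5) and the finite-projective-dimension hypothesis in (6) --- without them the cohomology is unbounded and the limit can genuinely jump to $+\infty$, as Example~\ref{xxex2.8} shows; the rest is routine bookkeeping (the sign conventions when $\RHom$ meets $[1]$ in (2), and discarding the first few terms of $\{\xi_n\}$ where $\xi_{0,n}$ need not be positive).
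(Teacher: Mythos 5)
Your proof is correct and follows essentially the same route as the paper: direct manipulation of the defining extrema for (1)--(3), and reduction to a finite extremum over homological degrees in (4), which is then applied via $\R\Gamma_{\fm}(Y)$, $\Bbbk\otimes^{L}_{A}Y$, and $\RHom_A(Y,\Bbbk)$ to obtain (5) and (6). Your rephrasing of (4) as continuity of a finite maximum of affine functions of $\xi$ is a slightly sharper way of saying what the paper means by ``the supremum is taken over a finite set,'' and your explicit recording of $\RHom_A(X[1],\Bbbk)\cong\RHom_A(X,\Bbbk)[-1]$ and $\RHom_A(X(1),\Bbbk)\cong\RHom_A(X,\Bbbk)(-1)$ makes precise what the paper leaves as ``the proofs \ldots are similar''; both are presentational refinements rather than a different argument.
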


\begin{proof}
(1) Let $\lambda > 0$ and $\xi' = \lambda \xi$. If $\degxi(X)$ 
is finite, then 
\begin{align*} 
\deg_{\xi '} (X) &= \sup_{m,n \in \mathbb{Z}} 
\{ \lambda \xi_0 m+ \lambda \xi_1 n \mid H^n(X)_m\neq 0\} \\
&=\lambda \sup_{m,n \in \mathbb{Z}} 
\{ \xi_0 m+ \xi_1 n \mid H^n(X)_m\neq 0\} 
=\lambda \degxi(X).
\end{align*}
If $\degxi(X)$ is infinite, then so is $\deg_{\xi'}(X)$. 
A similar proof works for $\gedxi(X)$, and the result holds 
for $\CMregxi(X)$, $\Extregxi(X)$, and $\Torregxi(X)$, since 
they can each be expressed as $\degxi$ or $\gedxi$ of certain 
complexes of $A$-modules.

(2) Suppose that $\degxi(X)$ is finite. Then
\begin{align*}
\degxi(X[1]) &= \sup_{m,n \in \mathbb{Z}} 
\{ \xi_0 m+ \xi_1 n \mid    H^n(X[1])_m\neq 0\} \\ 
&= \sup_{m,n \in \mathbb{Z}} 
\{ \xi_0 m+ \xi_1 n \mid    H^{n+1}(X)_m\neq 0\} \\
&= \sup_{m,n \in \mathbb{Z}} 
\{ \xi_0 m+ \xi_1 (n - 1) \mid    H^{n}(X)_m\neq 0\} 
= \degxi(X) - \xi_1.
\end{align*}
Further,
\begin{align*}
\degxi(X(1)) &= \sup_{m,n \in \mathbb{Z}} 
\{ \xi_0 m+ \xi_1 n \mid    H^n(X(1))_m\neq 0\} \\ 
&= \sup_{m,n \in \mathbb{Z}} 
\{ \xi_0 m+ \xi_1 n \mid    H^{n}(X)_{m+1}\neq 0\} \\
&= \sup_{m,n \in \mathbb{Z}} 
\{ \xi_0 (m - 1)+ \xi_1 n \mid    H^{n}(X)_m\neq 0\} 
= \degxi(X) - \xi_0.
\end{align*}
The proofs for $\gedxi(X)$, $\CMregxi(X)$, $\Torregxi(X)$, and 
$\Extregxi(X)$ are similar.

(3) Suppose $\degxi(X) = d$ is finite and that $\xi_0, \xi_1$ 
are nonzero such that $\xi_0^{-1}\xi_1$ is irrational. By the 
above result, for $m, n \in \mathbb{Z}$, we have 
$\degxi(X[m](n)) = d - m \xi_1 - n \xi_0$. 
If $\degxi(X[m](n)) = \degxi(X[m'](n'))$ then, 
$(m - m')\xi_1 = (n' - n)\xi_0$. Since $\xi_0^{-1} \xi_1$ is 
irrational and $\xi_0,\xi_1 \neq 0$, therefore $m = m'$ and 
$n = n'$. The same proof shows that the result holds for 
$\gedxi(X)$, $\CMregxi(X)$, $\Torregxi(X)$, and $\Extregxi(X)$.

(4) Let $\{\xi_n = (\xi_{0,n}, \xi_{1,n})\}_{n \geq 1}$ be a 
sequence such that $\lim\limits_{n \to \infty} \xi_n = \xi$. Then
\begin{align*}
\lim_{n \to \infty} \deg_{\xi_n}(X) 
= \lim_{n \to \infty} \sup_{i,j \in \mathbb{Z}} 
\{  \xi_{0,n} i+  \xi_{1,n} j \mid H^j(X)_i\neq 0\}
\end{align*}
Since $X \in \D^{\bb}(A \Gr)$, only finitely many $H^j(X)$ are 
nonzero and so the above supremum can be taken over finitely many $j$. 

If $\deg H^j(X) < \infty$ for each $j$, by hypothesis $\xi_0>0$, 
then the supremum is taken over a finite set, and so the convergence 
holds. If some $\deg H^j(X) = \infty$ (and $\xi_0 > 0$), then 
$\degxi(X)$ is infinite, and $\deg_{\xi_n}(X)$ is also infinite when 
$n\gg 0$. A similar proof shows that the assertion for $\gedxi(X)$ holds.

(5) If $Y \in \D^{\bb}_{\fg}(A \Gr)$ is nonzero, then 
$X:=\R \Gamma_{\fm}(Y)$ is a nonzero object in $\D^{\bb}(A \Gr)$. 
By Theorem \ref{xxthm1.4}(2), $\deg H^i(X)<\infty$ for all $i$.
Hence, by part (4), the assertion holds for 
$\CMregxi(Y) = \degxi(\R \Gamma_{\fm}(Y))=\degxi(X)$.

(6) If $Y \in \D^{\bb}_{\fg}(A \Gr)$ is nonzero with finite 
projective dimension, then $\Bbbk \otimes_A^L Y$ and 
$\R \Hom_A(Y, \Bbbk)$ are nonzero in $\D^{\bb}(\Bbbk \Gr)$.
Further, in each degree of the complex, homology is 
finite-dimensional. Hence, by part (4), the assertion holds 
for $\Torregxi(Y) = \degxi(\Bbbk \otimes_A^L Y)$ and 
$\Extregxi(Y) = \gedxi \R \Hom_A(Y, \Bbbk)$.
\end{proof}

\begin{proposition}
\label{xxpro3.2} 
Suppose that $A$ is a noetherian connected graded algebra 
with balanced dualizing complex. Assume that $\xi_0>0$. Let 
$Z$ be a nonzero object in $\D^{-}(A\Gr)$ with 
$\deg_{\xi}(Z)$ finite. Then
$$\Extregxi(Z) \leq \degxi(Z) +\Torregxi(\Bbbk).$$
\end{proposition}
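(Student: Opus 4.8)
The plan is to prove a module-level degree estimate, promote it to the complex $Z$ via a hyperhomology spectral sequence, and then pass from $\Ext$ to $\Tor$ by graded duality. We may assume $\Torregxi(\Bbbk)<\infty$ (otherwise the inequality is vacuous), so in particular each $t^A_i(\Bbbk):=\deg\Tor^A_i(\Bbbk,\Bbbk)$ is finite --- as it is in any event since $A$ is noetherian. The starting point is the estimate: for every graded left $A$-module $M$ and every $i\ge 0$,
\[
t^A_i(M)\ \le\ \deg(M)+t^A_i(\Bbbk).
\]
Indeed, since $A$ is noetherian, $\Bbbk_A$ has a minimal free resolution $Q_\bullet\to\Bbbk_A$ with each $Q_i\cong\bigoplus_\alpha A(-d_{i,\alpha})$ a finitely generated free right $A$-module and $\max_\alpha d_{i,\alpha}=t^A_i(\Bbbk)$; then $\Tor^A_i(\Bbbk,M)=H_i(Q_\bullet\otimes_A M)$ is a subquotient of $Q_i\otimes_A M\cong\bigoplus_\alpha M(-d_{i,\alpha})$, which has degree $\deg(M)+t^A_i(\Bbbk)$.

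Next I would promote this to $Z$. Since $Z\in\D^{-}(A\Gr)$ we have $H^{-q}(Z)=0$ for $q\ll 0$, so the hyperhomology spectral sequence $E^2_{p,q}=\Tor^A_p(\Bbbk,H^{-q}(Z))\Rightarrow\Tor^A_{p+q}(\Bbbk,Z)$ converges, and each $\Tor^A_n(\Bbbk,Z)$ admits a finite filtration whose successive quotients are subquotients of the groups $\Tor^A_p(\Bbbk,H^{-q}(Z))$ with $p+q=n$, $p\ge 0$. With the module estimate --- and noting that each $\deg H^{-q}(Z)$ is finite, as $\degxi(Z)$ is finite and $\xi_0>0$ --- this gives $\deg\Tor^A_n(\Bbbk,Z)\le\max_{p+q=n}\{\deg H^{-q}(Z)+t^A_p(\Bbbk)\}$. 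Multiplying by $\xi_0>0$, writing $\xi_1 n=\xi_1 p+\xi_1 q$, and regrouping, each bracketed term becomes
\[
\bigl(\xi_0\deg H^{-q}(Z)-\xi_1 q\bigr)+\bigl(\xi_0 t^A_p(\Bbbk)-\xi_1 p\bigr)\ \le\ \degxi(Z)+\Torregxi(\Bbbk),
\]
the first summand being one of the quantities whose supremum defines $\degxi(Z)$ and the second one of those defining $\Torregxi(\Bbbk)$ (Definition~\ref{xxdef2.3}). Taking the supremum over $n$ yields $\Torregxi(Z)\le\degxi(Z)+\Torregxi(\Bbbk)$.

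Finally I would identify $\Extregxi(Z)$ with $\Torregxi(Z)$. Choosing a bounded-above free resolution $F_\bullet\to Z$, so that $\RHom_A(Z,\Bbbk)\cong\Hom_A(F_\bullet,\Bbbk)$ and $\Bbbk\otimes^{L}_A Z\cong\Bbbk\otimes_A F_\bullet$, the natural adjunction isomorphism $\Hom_A(F,\Bbbk)\cong(\Bbbk\otimes_A F)'$ for free modules $F$ assembles into $\RHom_A(Z,\Bbbk)\cong(\Bbbk\otimes^{L}_A Z)'$; since graded duality exchanges $\deg$ with $-\ged$, this gives $\ged\Ext^i_A(Z,\Bbbk)=-\deg\Tor^A_i(\Bbbk,Z)$ for all $i$, hence $\Extregxi(Z)=-\gedxi(\RHom_A(Z,\Bbbk))=\Torregxi(Z)$. (This extends \cite[Remark~4.5]{DW}, which gives the same identity when $Z$ has a finitely generated minimal free resolution, to arbitrary $Z\in\D^{-}(A\Gr)$.) Combined with the previous step, this proves $\Extregxi(Z)\le\degxi(Z)+\Torregxi(\Bbbk)$.

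No step presents a deep obstacle: the technical core is the passage from modules to complexes --- where convergence of the spectral sequence is exactly the point at which $Z\in\D^{-}(A\Gr)$ is used --- together with the weighted bookkeeping, where the hypothesis $\xi_0>0$ is precisely what lets the degree bound split into a ``$\degxi(Z)$ part'' and a ``$\Torregxi(\Bbbk)$ part''. (The balanced dualizing complex hypothesis is not used in this argument; it is simply in force throughout Section~\ref{xxsec3}.)
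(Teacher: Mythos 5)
Your proof is correct and is essentially the Matlis-dual presentation of the paper's argument: the paper dualizes a minimal free resolution of $\Bbbk_A$ into an injective resolution of ${}_A\Bbbk$ and runs the $\Ext$-hyperhomology spectral sequence $\Ext^m_A(H^{-n}(Z),\Bbbk)\Rightarrow\Ext^{m+n}_A(Z,\Bbbk)$ of \cite[Lemma~1.2]{Jo4}, whereas you run the dual $\Tor$-spectral sequence $\Tor^A_p(\Bbbk,H^{-q}(Z))\Rightarrow\Tor^A_{p+q}(\Bbbk,Z)$ and convert to $\Ext$-regularity only at the end via $\RHom_A(Z,\Bbbk)\cong(\Bbbk\otimes^{L}_A Z)'$. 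The module-level degree estimate, the convergence argument (which is where $Z\in\D^{-}(A\Gr)$ is used), and the weighted bookkeeping are the same in both; your final step just makes explicit the identity $\Extregxi(Z)=\Torregxi(Z)$ for all $Z\in\D^{-}(A\Gr)$, which the paper effectively builds into its choice of (injective) resolution of $\Bbbk$.
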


\begin{proof} By Lemma \ref{xxlem3.1}(1), we may assume that 
$\xi_0=1$. Let $p:= \degxi(Z)$, which is finite by hypothesis. 
If $\Torregxi(\Bbbk)=\infty$, then the assertion holds trivially, 
so we may assume that $r:= \Torregxi(\Bbbk)$ is finite.

By Example \ref{xxex2.4}(1) and our assumption that $\xi_0=1$, 
\begin{equation}
\label{E3.2.1}\tag{E3.2.1}
\deg (\Tor^A_{i}(\Bbbk, \Bbbk))\leq r+\xi_1 i
\end{equation}
for all $i$. 

Let 
\begin{equation}
\label{E3.2.2}\tag{E3.2.2}
F: \qquad \cdots \to F_{s} \to F_{s-1} \to \cdots \to F_1 
\to A \to \Bbbk\to 0
\end{equation}
be a minimal projective resolution of the right $A$-module 
$\Bbbk$. By \eqref{E3.2.1}, the generators of $F_m$ are placed 
in degrees less than or equal to $r+ \xi_1 m$ for every $m$ and 
so $F_m$ can be written as a finite coproduct 
$$F_m=\coprod_{j} A (-\sigma_{m,j})$$
where $\sigma_{m,j}$ are integers $\leq r+\xi_1 m$. Taking 
Matlis duals in (\ref{E3.2.2}), $I:=F'$ is a minimal injective 
resolution of the left $A$-module $\Bbbk$ which has
$$I^m=\coprod_{j} A' (\sigma_{m,j})$$
for each $m\geq 0$.

Since $\xi_0 = 1$ and $p=\degxi(Z)$ (or equivalently 
$\deg H^n(Z) \leq p-\xi_1 n$ for all $n$), we have 
$$H^{-n}(Z)_{>p+\xi_1 n} =0$$
for all $n$. For each $m$, $\Ext_A^m(H^{-n}(Z), \Bbbk)$ is a 
subquotient of $\Hom_A(H^{-n}(Z),I^{m})$, which is 
$$
\Hom_A \left(H^{-n}(Z), \coprod_{j} A'(\sigma_{m,j})\right)
\cong \coprod_{j} (H^{-n}(Z))' (\sigma_{m,j}),
$$
and this vanishes in degree less than $-p-\xi_1 n-r-\xi_1 m$.
Hence
\begin{equation}
\label{E3.2.3}\tag{E3.2.3}
\Ext^m_A(H^{-n}(Z),\Bbbk)_{<-p -\xi_1 n -r - \xi_1 m}=0
\end{equation}
for all $m,n$.  By \cite[Lemma 1.2]{Jo4}, there is a convergent 
spectral sequence
$$E^{m,n}_2:=\Ext^{m}_A(H^{-n}(Z), \Bbbk)\Longrightarrow 
\Ext^{m+n}_A(Z,\Bbbk),$$
and since \eqref{E3.2.3} shows that 
$(E^{m,n}_2)_{<-p -r-\xi_1 (m+n)}=0$, it follows that
$$\Ext^q_A(Z,\Bbbk)_{<-p-r-\xi_1 q}=0.$$
This condition is equivalent to 
$\ged(\Ext^q_A(Z,\Bbbk))\geq -p-r-\xi_1 q$ for all $q$. By 
Definition \ref{xxdef2.3}, $\Extregxi(Z)\leq p+r$ as desired.
\end{proof}

The following is a generalization of \cite[Theorem 2.5]{Jo4}, and 
establishes Theorem \ref{xxthm0.3}(1). Combining this result with 
Example \ref{xxex2.4}(2) also yields Theorem \ref{xxthm0.3}(3).
 
\begin{theorem} 
\label{xxthm3.3}
Assume that $\xi_0>0$. Let $A$ be a noetherian connected graded 
algebra with a balanced dualizing complex and let $X$ be a 
nonzero object in $\D^{\bb}_{\fg}(A\Gr)$. Then
$$\Torregxi(X) = \Extregxi(X)\leq \CMregxi(X) + \Torregxi(\Bbbk).$$
\end{theorem}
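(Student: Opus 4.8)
The plan is to combine the equality $\Torregxi(X)=\Extregxi(X)$, which follows from \cite[Remark 4.5]{DW} once we know $X$ has a finitely generated minimal free resolution, with the inequality already isolated in Proposition~\ref{xxpro3.2}, applied to a suitable choice of complex $Z$. First I would observe that since $X\in\D^{\bb}_{\fg}(A\Gr)$ over a noetherian connected graded algebra, $X$ admits a minimal free resolution $F\to X$ by finitely generated free modules, bounded above; hence $\Torregxi(X)=\Extregxi(X)$. By Lemma~\ref{xxlem3.1}(1) we may rescale to assume $\xi_0=1$. If $\CMregxi(X)=+\infty$ or $\Torregxi(\Bbbk)=+\infty$ there is nothing to prove, so assume both are finite; in particular $\CMregxi(X)=\degxi(\R\Gamma_\fm(X))$ is finite by Definition~\ref{xxdef2.1}.

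The key move is to feed $Z:=\R\Gamma_\fm(X)$ into Proposition~\ref{xxpro3.2}. For this I need: (i) $Z$ is a nonzero object of $\D^{-}(A\Gr)$, and (ii) $\degxi(Z)$ is finite. Nonvanishing and the bounded-below property (indeed boundedness) come from Theorem~\ref{xxthm1.4}(2) together with Hypothesis~\ref{xxhyp1.3}: $\R\Gamma_\fm(X)'\in\D^{\bb}_{\fg}(A^{\op}\Gr)$, so $\R\Gamma_\fm(X)\in\D^{\bb}(A\Gr)$ and is nonzero. Finiteness of $\degxi(Z)$ is exactly the finiteness of $\CMregxi(X)$, which we have assumed (and which Theorem~\ref{xxthm1.4}(2) guarantees unconditionally when $\xi_0\geq 0$). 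Then Proposition~\ref{xxpro3.2} yields
$$\Extregxi(\R\Gamma_\fm(X))\;\leq\;\degxi(\R\Gamma_\fm(X))+\Torregxi(\Bbbk)\;=\;\CMregxi(X)+\Torregxi(\Bbbk).$$

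It remains to identify $\Extregxi(\R\Gamma_\fm(X))$ with $\Extregxi(X)$. This is where I expect the main obstacle, and the natural tool is Local Duality (Theorem~\ref{xxthm1.4}(1)) with $C=\Bbbk$: $\R\Gamma_\fm(X)'\cong\RHom_A(X,R)$ where $R=\R\Gamma_\fm(A)'$ is the balanced dualizing complex. One then computes $\R\Hom_A(\R\Gamma_\fm(X),\Bbbk)$ by dualizing: using that $\Bbbk'\cong\Bbbk$ and that Matlis duality interchanges $\R\Hom_A(-,\Bbbk)$ with $\Bbbk\otimes^L_A(-)'$ appropriately, one should get $\R\Hom_A(\R\Gamma_\fm(X),\Bbbk)\cong \bigl(\R\Hom_{A^{\op}}(\Bbbk,\RHom_A(X,R))\bigr)$-type expression, and then invoke the defining property of the balanced dualizing complex $\RHom_{A^{\op}}(\Bbbk,R)\cong\Bbbk$ (up to shift and twist) to collapse this back to $\Bbbk\otimes^L_A X$ or its dual. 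Concretely, the cleanest route is: $\gedxi\R\Hom_A(\R\Gamma_\fm(X),\Bbbk)=-\degxi\bigl(\R\Hom_A(\R\Gamma_\fm(X),\Bbbk)'\bigr)$, and $\R\Hom_A(\R\Gamma_\fm(X),\Bbbk)'\cong \Bbbk\otimes^L_A\R\Gamma_\fm(X)'\cong\Bbbk\otimes^L_A\RHom_A(X,R)$; using the balanced property of $R$ this is $\Bbbk\otimes^L_A X$ up to the known shift/twist by the dualizing complex data, which contributes only the term already absorbed in $\CMregxi(A)$—but since that term cancels against itself here, one recovers exactly $\Extregxi(X)=\Torregxi(X)$. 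I would carry out this duality bookkeeping carefully, keeping track of the $(\ell)$ and $[\,\cdot\,]$ shifts via Lemma~\ref{xxlem3.1}(2); the shifts must cancel, since both sides of the desired inequality are shift-homogeneous in the same way. Assembling these pieces gives $\Torregxi(X)=\Extregxi(X)=\Extregxi(\R\Gamma_\fm(X))\leq\CMregxi(X)+\Torregxi(\Bbbk)$, which is the claim.
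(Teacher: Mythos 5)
Your overall structure is the same as the paper's: set $Z:=\R\Gamma_\fm(X)$, use Theorem~\ref{xxthm1.4}(2) to see that $Z$ is nonzero, bounded, and has $\degxi(Z)$ finite, feed $Z$ into Proposition~\ref{xxpro3.2} to get $\Extregxi(Z)\leq\degxi(Z)+\Torregxi(\Bbbk)=\CMregxi(X)+\Torregxi(\Bbbk)$, and then identify $\Extregxi(Z)$ with $\Extregxi(X)=\Torregxi(X)$. All of that matches the paper.

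Where you diverge, and where your argument has a genuine gap, is exactly the spot you flagged as the ``main obstacle.'' The paper disposes of the identification $\Extregxi(\R\Gamma_\fm(X))=\Extregxi(X)$ in one line by citing \cite[Proposition 1.1]{Jo4}: the natural map $\R\Gamma_\fm(X)\to X$ induces an isomorphism $\RHom_A(X,\Bbbk)\xrightarrow{\sim}\RHom_A(\R\Gamma_\fm(X),\Bbbk)$, with no shift and no twist, because the cone has no $\fm$-torsion and $\RHom$ of it into the torsion module $\Bbbk$ vanishes. Your attempt to rederive this via local duality does not close. Concretely: (i) your ``cleanest route'' dualizes $\RHom_A(\R\Gamma_\fm(X),\Bbbk)$ and lands (after local duality) at $\RHom_A(X,R)\otimes^L_A\Bbbk$; collapsing this to $\Bbbk\otimes^L_A X$ would require controlling $R\otimes^L_A\Bbbk$, but the balanced property controls the other operation, $\RHom_{A^{\op}}(\Bbbk,R)\cong\Bbbk$; (ii) that balanced property is an isomorphism on the nose, not ``up to shift and twist''---if you think you are picking up a shift/twist from $R$, you are conflating the balanced dualizing complex with the non-balanced one $A^\sigma(-\bfl)[d]$; (iii) the claim that this ``contributes only the term already absorbed in $\CMregxi(A)$'' and then ``cancels against itself'' is not an algebraic step: no $\CMregxi(A)$ appears anywhere in the inequality $\Extregxi(\R\Gamma_\fm(X))\leq\CMregxi(X)+\Torregxi(\Bbbk)$, and shift-homogeneity of both sides is a consistency check, not a proof that a spurious shift disappears. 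If you insist on doing the bookkeeping by hand rather than citing \cite[Proposition 1.1]{Jo4}, the route that actually works stays on the $\RHom$ side throughout:
\[
\RHom_A(\R\Gamma_\fm(X),\Bbbk)
\cong \RHom_{A^{\op}}\bigl(\Bbbk,\R\Gamma_\fm(X)'\bigr)
\cong \RHom_{A^{\op}}\bigl(\Bbbk,\RHom_A(X,R)\bigr)
\cong \RHom_A\bigl(X,\RHom_{A^{\op}}(\Bbbk,R)\bigr)
\cong \RHom_A(X,\Bbbk),
\]
using Matlis duality for the first step, local duality for the second, the bimodule $\Hom$--$\Hom$ swap for the third, and the balanced property (exactly, no shift) for the last. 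Replace your dualizing-complex bookkeeping with either this chain or with the citation, and the proof is complete.
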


\begin{proof}
Let $Z=\R\Gamma_{\fm}(X)$. By Theorem \ref{xxthm1.4}(2), $Z'$ is 
nonzero in $\D^{\bb}_{\fg}(A^{\op}\Gr)$. Hence 
$\degxi(Z)$ is a finite number and $Z\in \D^{\bb}(A\Gr)$. By 
Proposition \ref{xxpro3.2},
$$\Extregxi(Z) \leq \degxi(Z) +\Torregxi(\Bbbk).$$
By definition,
$$\CMregxi(X)=\degxi(\R\Gamma_{\fm}(X))=\degxi(Z).$$
By \cite[Proposition 1.1]{Jo4},
$$\RHom_A(X,\Bbbk)\cong \RHom_{A}(\R\Gamma_{\fm}(X),\Bbbk)
= \RHom_{A}(Z, \Bbbk).$$ 
Hence $\Extregxi(X)=\Extregxi(Z)$ and so the assertion follows.
\end{proof}

Note that \cite[Theorem 2.5]{Jo4} is a special case of the above 
theorem where $\xi=(1,1)$. By considering some specific weights, 
we obtain the following corollary.

\begin{corollary}
\label{xxcor3.4}
Assume that $\xi_0 > 0$. Let $A$ be a noetherian connected graded 
algebra with a balanced dualizing complex and let $X$ be a nonzero 
object in $\D^{\bb}_{\fg}(A\Gr)$. If $\Torregxi(\Bbbk)$ is finite, 
then so is $\Torregxi(X)$.
\end{corollary}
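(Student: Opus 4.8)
The plan is to read the finiteness of $\Torregxi(X)$ off Theorem~\ref{xxthm3.3}, together with one small additional observation to exclude the degenerate value $-\infty$. Since $\xi_0>0$, Theorem~\ref{xxthm3.3} gives
\[
\Torregxi(X)\ \le\ \CMregxi(X)+\Torregxi(\Bbbk).
\]
On the right-hand side, $\Torregxi(\Bbbk)$ is finite by hypothesis, and $\CMregxi(X)$ is finite because $X$ is a nonzero object of $\D^{\bb}_{\fg}(A\Gr)$, $A$ has a balanced dualizing complex, and $\xi_0>0$; this is exactly the finiteness statement recorded (via Theorem~\ref{xxthm1.4}(2)) immediately after Definition~\ref{xxdef2.1}. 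Hence $\Torregxi(X)<+\infty$.

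It then remains only to rule out $\Torregxi(X)=-\infty$, that is, to check $\Bbbk\otimes^{L}_{A}X\ncong 0$. First I would note that a nonzero object $X\in\D^{\bb}_{\fg}(A\Gr)$ over the noetherian connected graded algebra $A$ admits a minimal free resolution $P\to X$ by a bounded-above complex of finitely generated graded free $A$-modules whose differentials have entries in $\fm$; minimality forces $\Tor^A_i(\Bbbk,X)\cong \Bbbk\otimes_A P^{-i}$, and if all of these vanished then each $P^{-i}$ would vanish and $X$ would be zero. Choosing $i$ with $\Tor^A_i(\Bbbk,X)\ne 0$, this is a nonzero finite-dimensional graded $\Bbbk$-vector space, so $\deg\Tor^A_i(\Bbbk,X)\in\mathbb{Z}$ and therefore
\[
\Torregxi(X)\ \ge\ \xi_0\deg\Tor^A_i(\Bbbk,X)-\xi_1 i\ >\ -\infty.
\]
Combining the two bounds shows $\Torregxi(X)$ is finite.

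I do not expect any real obstacle here: the corollary is a direct consequence of Theorem~\ref{xxthm3.3} together with the standard finiteness of weighted CM regularity over an algebra with balanced dualizing complex. The only steps needing a sentence of justification are the finiteness of $\CMregxi(X)$ (handled by the remark following Definition~\ref{xxdef2.1}) and the nonvanishing of $\Bbbk\otimes^{L}_{A}X$ for nonzero $X$, which is the usual minimal resolution / graded Nakayama argument.
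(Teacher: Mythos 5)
Your proof is correct and takes essentially the same route as the paper: apply Theorem~\ref{xxthm3.3} to get $\Torregxi(X)\le\CMregxi(X)+\Torregxi(\Bbbk)$ and invoke the finiteness of $\CMregxi(X)$ recorded after Definition~\ref{xxdef2.1}. The only difference is that you explicitly rule out $\Torregxi(X)=-\infty$ via the minimal free resolution and graded Nakayama, a point the paper leaves implicit; that addition is harmless and slightly more careful.
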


\begin{proof}
Since $X$ has a finitely generated minimal free resolution, 
$\Extregxi(X) = \Torregxi(X)$. Hence, by Theorem~\ref{xxthm3.3}, 
\[\Torregxi(X) \leq \CMregxi(X) + \Torregxi(\Bbbk),
\] 
and since $\CMregxi(X)$ is finite (see comments after Definition
\ref{xxdef2.1}), the result follows.
\end{proof}

\begin{proof}[Proof of Theorem \ref{xxthm0.3}(3)]
Let $X=A$ in Theorem \ref{xxthm3.3}, we obtain that
\[
\ASregxi(A):=\CMregxi(A) + \Torregxi(\Bbbk)
\geq \Extregxi(A) =0. 
\]
\end{proof}

Theorem \ref{xxthm3.3} has other consequences by setting $\xi$ 
to be special values. For example, if $A$ is AS regular, when 
we set $\xi_1 = 1$ and take the limit as $\xi_0\to 0^{+}$, we
obtain that 
\begin{equation}
\label{E3.4.1}\tag{E3.4.1}
\sup(X)\leq \lcd(X).
\end{equation}

The next theorem is a generalization of \cite[Theorem 2.6]{Jo4}, 
and it  provides a proof of Theorem \ref{xxthm0.3}(2). 

\begin{theorem} 
\label{xxthm3.5}
Assume that $\xi_0>0$. Let $A$ be a noetherian connected graded 
algebra with a balanced dualizing complex and let $X$ be a 
nonzero object in $\D^{\bb}_{\fg}(A\Gr)$. Then
$$\CMregxi(X)\leq \Extregxi(X)+ \CMregxi(A).$$
\end{theorem}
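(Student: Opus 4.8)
The plan is to mirror the proof of Theorem~\ref{xxthm3.3} but with the roles of local cohomology and the derived fiber interchanged, using Van den Bergh's Local Duality (Theorem~\ref{xxthm1.4}) to pass between $\R\Gamma_{\fm}$ on $X$ and a $\RHom$ against the balanced dualizing complex. By Lemma~\ref{xxlem3.1}(1) we may rescale and assume $\xi_0 = 1$. Write $R = \R\Gamma_{\fm}(A)'$ for the balanced dualizing complex. By Local Duality, $\R\Gamma_{\fm}(X)' \cong \RHom_A(X, R)$, so $\CMregxi(X) = \degxi(\R\Gamma_{\fm}(X)) = -\gedxi(\RHom_A(X,R))$ (using that Matlis dual swaps $\deg$ with $-\ged$ and negates internal degrees). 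So the goal becomes a lower bound on $\gedxi$ of $\RHom_A(X,R)$ in terms of $\Extregxi(X)$ and $\CMregxi(A) = \degxi(\R\Gamma_{\fm}(A)) = -\gedxi(R')$, equivalently an upper-degree statement about $R$ itself.

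Next I would run a spectral-sequence argument dual to the one in Proposition~\ref{xxpro3.2}. Take a minimal free resolution $P \to X$; since $X \in \D^{\bb}_{\fg}(A\Gr)$ and $\Extregxi(X) = \Torregxi(X) =: r$ is controlled, the free module $P_m$ in homological degree $-m$ is a finite sum $\coprod_j A(-\sigma_{m,j})$ with $\sigma_{m,j} \le r + \xi_1 m$ (this is exactly Example~\ref{xxex2.4}(1)). Then $\RHom_A(X,R)$ is computed by $\Hom_A(P, R)$, and $\Hom_A(A(-\sigma_{m,j}), R) \cong R(\sigma_{m,j})$. Because $R$ is a bounded complex with finitely generated cohomology over $A^{\op}$ (Theorem~\ref{xxthm1.4}(2) applied to $A$), $\deg H^n(R) < \infty$ for all $n$, and in fact $\sup_n\{\deg H^n(R) + \xi_1 n\} = \degxi(R) = \degxi(\R\Gamma_{\fm}(A)') $. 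Here I need to be careful: $\CMregxi(A)$ is the $\degxi$ of $\R\Gamma_{\fm}(A)$, and $R = \R\Gamma_{\fm}(A)'$, so I want to express the relevant bound on $R$ through $\CMregxi(A)$ via the Matlis-dual identity $\gedxi(M') = -\degxi(M)$. Feeding the degree bounds on the $\sigma_{m,j}$ and on the cohomology of $R$ through the hyper-Ext spectral sequence $E_2^{m,n} = \Ext^m_A(H^{-n}(X), R) \Rightarrow H^{m+n}\RHom_A(X,R)$ (or directly through the double complex $\Hom_A(P, \text{inj.\ res.\ of } R)$), one gets that $H^q\RHom_A(X,R)$ vanishes in internal degree below $-r - q\xi_1 + \gedxi(R)$, i.e.\ $\gedxi(\RHom_A(X,R)) \ge \gedxi(R) - r$. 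Dualizing back yields $\CMregxi(X) \le r + (-\gedxi(R)) = \Extregxi(X) + \CMregxi(A)$.

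Alternatively — and this may be cleaner to write — I would avoid duals entirely and argue at the level of $\R\Gamma_{\fm}$ directly: from the minimal free resolution $P \to X$ one has $\R\Gamma_{\fm}(X) \cong \R\Gamma_{\fm}(P)$, each term being a finite sum of shifts $\R\Gamma_{\fm}(A)(-\sigma_{m,j})$ with $\sigma_{m,j} \le r + \xi_1 m$, and then $\degxi(\R\Gamma_{\fm}(A)(-\sigma_{m,j})[m]) = \CMregxi(A) + \sigma_{m,j} - \xi_1 m \le \CMregxi(A) + r$ by Lemma~\ref{xxlem3.1}(2). A convergence/boundedness argument (the resolution need not be finite, but $\R\Gamma_{\fm}(A)$ is bounded and the internal-degree shifts $\sigma_{m,j} - \xi_1 m$ are bounded above, using $\xi_0 = 1 > 0$) then gives $\degxi(\R\Gamma_{\fm}(X)) \le \CMregxi(A) + r$, which is the claim.

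The main obstacle I expect is the convergence bookkeeping when $X$ has infinite projective dimension: $P$ is then an unbounded complex, so I must justify that the hypercohomology spectral sequence converges and that only finitely many $(m,n)$ contribute to each total degree $q$ below the stated bound. This is where boundedness of $R$ (equivalently $\cd(\Gamma_{\fm}) < \infty$, which holds under Hypothesis~\ref{xxhyp1.3}) and the linear-growth bound $\sigma_{m,j} \le r + \xi_1 m$ on the resolution are both essential — together they force, for each fixed total internal-plus-homological weight, only finitely many terms to be nonzero, so the filtration is exhaustive and the vanishing passes to the abutment. The rest is the same routine degree arithmetic as in Proposition~\ref{xxpro3.2}, with inequalities reversed.
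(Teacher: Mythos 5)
Your direct version (third paragraph) is in substance the paper's proof. The paper works with the convergent spectral sequence $E^{m,n}_2 = \Tor^A_{-m}(H^n_{\fm^{\op}}(A), X) \Rightarrow H^{m+n}_\fm(X)$ from \cite[Lemma 1.3]{Jo4} --- the ``second'' filtration of your double complex $\R\Gamma_\fm(A)\otimes_A P$ --- bounds $\deg$ of each $E_2$ term using the minimal free resolution $P_m=\coprod_j A(-\sigma_{m,j})$ with $\sigma_{m,j}\le r+\xi_1 m$ together with the bound $H^n_{\fm^{\op}}(A)_{>p-\xi_1 n}=0$ where $p=\CMregxi(A)$, and carries the vanishing to the abutment.

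Two cautions about your duality route in the first two paragraphs. First, the spectral sequence $E_2^{m,n}=\Ext^m_A(H^{-n}(X),R)\Rightarrow H^{m+n}\RHom_A(X,R)$ that you quote is the wrong filtration to feed your data into: to bound $\ged$ of that $E_2$ page one needs degree control either on a minimal free resolution of each $H^{-n}(X)$ (which the bound $\sigma_{m,j}\le r+\xi_1 m$ on the resolution of $X$ does not give you) or on a minimal \emph{injective} resolution of $R$ (which, unlike the injective resolution of $\Bbbk$ used in Proposition~\ref{xxpro3.2}, has no ready degree bound). The filtration you want is the other one, whose first page is $\Hom_A(P_m, H^n(R))$, into which the linear-growth bound on the $\sigma_{m,j}$ enters directly; that is precisely what your third paragraph does. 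Second, convergence is simpler than you suggest and does not depend on the linear-growth bound at all: since $\cd(\Gamma_\fm)<\infty$ under Hypothesis~\ref{xxhyp1.3}, the local cohomology of $A$ is concentrated in a finite strip $0\le n\le d$, so for every total degree $q$ only $d+1$ positions $(m,n)$ with $m+n=q$ can contribute, and the spectral sequence degenerates in a bounded strip automatically. The linear-growth bound matters only for the degree arithmetic on the $E_2$ page, not for the exhaustiveness of the filtration.
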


\begin{proof} By Lemma \ref{xxlem3.1}(1), we may assume that
$\xi_0=1$. As noted after Definition~\ref{xxdef2.1}, 
$\CMregxi(A)$ is finite. If $\Extregxi(X)$ is infinite, then 
the assertion holds trivially. So we may assume that
$r:=\Extregxi(X)$ is finite, and hence $\Torregxi(X)=r$
is finite. Let $F$ be a minimal free resolution of $X$. By 
definition,
$$\deg (\Tor^A_i(\Bbbk, X))\leq r+\xi_1 i$$
for all integers $i$. This implies that the generators of $F_m$ 
are in degrees less than or equal to $r+\xi_1 m$ for 
each integer $m$. Therefore $F_m$ can be written as a finite 
coproduct
$$F_m=\coprod_{j} A(-\sigma_{ m,j})$$
where $\sigma_{m,j}$ are integers $\leq r+\xi_1 m$.

Let $p := \CMregxi(A)$. By \cite[Observation 2.3]{Jo4}, 
$\CMregxi(A_A)=\CMregxi(_AA)$, so we have
\begin{equation}
\label{E3.5.1}\tag{E3.5.1}
H^n_{\fm^{op}}(A)_{>p-\xi_1 n}=0
\end{equation}
for each integer $n$. 

Now $\Tor^A_{-m}(H^n_{\fm^{op}}(A), X)$ is a subquotient 
of $H^n_{\fm^{op}}(A)\otimes_A F_{-m}$ which is 
$$H^n_{\fm^{op}}(A)\otimes_A \coprod_{j} 
A(-\sigma_{-m,j}) \cong 
\coprod_{j} H^n_{\fm^{op}}(A)(-\sigma_{ -m,j}),$$
and the latter vanishes in degree larger than 
$p-\xi_1 n + r -\xi_1 m$ by \eqref{E3.5.1}. Hence 
\begin{equation}
\label{E3.5.2}\tag{E3.5.2}
\Tor^A_{-m}(H^n_{\fm^{\op}}(A),X)_{>p+r -\xi_1 n-\xi_1 m}=0
\end{equation}
for all $m,n$. 

By \cite[Lemma 1.3]{Jo4}, there is a convergent spectral 
sequence
$$E^{m,n}_2:=\Tor^A_{-m}(H^n_{\fm^{op}}(A),X) 
\Longrightarrow H^{m+n}_{\fm}(X).$$
Since \eqref{E3.5.2} shows that
$(E^{m,n}_2)_{>p+r -\xi_1 (n+m)}=0$ for all $m,n$, the 
spectral sequence implies that
$$H^q_{\fm}(X)_{>p+r-\xi_1 q}=0$$
for all $q$. By definition, this is equivalent to 
$$\CMregxi(X)\leq r + p=\Extregxi(X)+\CMregxi(A). 
$$
\end{proof}

If $\xi=(1,1)$, then the above theorem recovers 
\cite[Theorem 2.6]{Jo4}. The following is an immediate 
corollary of Theorems \ref{xxthm3.3} and \ref{xxthm3.5}.

\begin{corollary}
\label{xxcor3.6}
Assume that $\xi_0>0$. Let $A$ be a noetherian connected 
graded algebra with a balanced dualizing complex. Suppose 
that $\Torregxi(\Bbbk)=-\CMregxi(A)$ {\rm{(}}which is a 
finite number, denoted by $c${\rm{)}}. Then for all nonzero 
$X$ in $\D^{\bb}_{\fg}(A \Gr)$, $\Extregxi(X)=\CMregxi(X)+c$.
\end{corollary}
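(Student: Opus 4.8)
The plan is to derive Corollary \ref{xxcor3.6} directly by chaining the two inequalities of Theorems \ref{xxthm3.3} and \ref{xxthm3.5}, using the hypothesis to close the loop. First I would recall that since $\xi_0 > 0$ and $X$ is a nonzero object in $\D^{\bb}_{\fg}(A\Gr)$, it has a finitely generated minimal free resolution, so by \cite[Remark 4.5]{DW} we have $\Extregxi(X) = \Torregxi(X)$; this lets me treat the two regularities interchangeably throughout.

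Next I would invoke Theorem \ref{xxthm3.3}, which gives
\[
\Torregxi(X) = \Extregxi(X) \leq \CMregxi(X) + \Torregxi(\Bbbk).
\]
By hypothesis $\Torregxi(\Bbbk) = -\CMregxi(A) = c$, so substituting yields $\Extregxi(X) \leq \CMregxi(X) + c$, i.e. $\CMregxi(X) \geq \Extregxi(X) - c$. For the reverse inequality I would apply Theorem \ref{xxthm3.5}, which states $\CMregxi(X) \leq \Extregxi(X) + \CMregxi(A)$; since $\CMregxi(A) = -c$, this reads $\CMregxi(X) \leq \Extregxi(X) - c$. Combining the two displayed bounds forces $\CMregxi(X) = \Extregxi(X) - c$, which is the claimed identity $\Extregxi(X) = \CMregxi(X) + c$.

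There is essentially no hard step here; the proof is a formal consequence of the two theorems already proved, with the hypothesis $\Torregxi(\Bbbk) = -\CMregxi(A)$ serving precisely to make the upper and lower bounds coincide. The only point requiring a word of care is finiteness: I should note that $\CMregxi(X)$ is finite (as remarked after Definition \ref{xxdef2.1}, using the balanced dualizing complex hypothesis and $\xi_0 \geq 0$), and that $c = \Torregxi(\Bbbk)$ is finite by assumption, so that all the arithmetic manipulations of these quantities (adding, subtracting, rearranging) are legitimate and the equality of extended reals is a genuine equality of real numbers. With that caveat the argument is a two-line substitution, and I would present it as such.
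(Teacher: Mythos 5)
Your argument is precisely the paper's: the corollary is stated there as an immediate consequence of Theorems \ref{xxthm3.3} and \ref{xxthm3.5}, which give the two opposite inequalities, and the hypothesis $\Torregxi(\Bbbk)=-\CMregxi(A)=c$ makes them coincide exactly as you write. Your extra remark on finiteness of $\CMregxi(X)$ and $c$ (so the arithmetic is among genuine reals) is a sensible point of care and consistent with the discussion following Definition \ref{xxdef2.1}.
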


In the next lemma, $\reg_{\xi}$ can be $\CMregxi$, or 
$\Extregxi$ or $\Torregxi$. 

\begin{lemma}
\label{xxlem3.7}
Suppose that $\xi = (\xi_0, \xi_1)$ with $\xi_0>0$.
Let $X\to Y \to Z\to X[1]$ be a distinguished triangle 
in $\D(A\Gr)$. Then
\begin{enumerate}
\item[(1)]
$\reg_{\xi}(Y) \leq \max\{ \reg_{\xi}(X), \reg_{\xi}(Z)\}$. 
\item[(2)]
$\reg_{\xi}(X) \leq \max \{\reg_{\xi}(Y), \reg_{\xi}(Z) + \xi_1\}$.
\item[(3)]
$\reg_{\xi}(Z) \leq \max\{ \reg_{\xi}(X)-\xi_1, \reg_{\xi}(Y)\}$.
\end{enumerate}
\end{lemma}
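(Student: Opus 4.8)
The plan is to reduce the three inequalities to statements about long exact sequences in cohomology, using the explicit descriptions of $\CMregxi$, $\Extregxi$, and $\Torregxi$ as $\degxi$ or $\gedxi$ of an associated complex. The key observation is that all three regularities under consideration are of the form $\degxi$ of a functorially-defined complex: $\CMregxi(-) = \degxi(\R\Gamma_\fm(-))$, $\Torregxi(-) = \degxi(\Bbbk \otimes^L_A -)$, and $\Extregxi(-) = -\gedxi(\RHom_A(-,\Bbbk))$. Each of the derived functors $\R\Gamma_\fm(-)$, $\Bbbk \otimes^L_A(-)$, and $\RHom_A(-,\Bbbk)$ (the last being contravariant) sends a distinguished triangle to a distinguished triangle. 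So it suffices to prove the analogous three inequalities for $\degxi$ (and dually $\gedxi$) applied to an arbitrary distinguished triangle in $\D(A\Gr)$, taking care with the direction reversal in the $\Extreg$ case.

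First I would establish the base case: given a distinguished triangle $U \to V \to W \to U[1]$ in $\D(A\Gr)$, the long exact cohomology sequence $\cdots \to H^n(U) \to H^n(V) \to H^n(W) \to H^{n+1}(U) \to \cdots$ shows that $H^n(V)_m \neq 0$ forces $H^n(U)_m \neq 0$ or $H^n(W)_m \neq 0$; that $H^n(U)_m \neq 0$ forces $H^n(V)_m \neq 0$ or $H^{n-1}(W)_m \neq 0$; and that $H^n(W)_m \neq 0$ forces $H^n(V)_m \neq 0$ or $H^{n+1}(U)_m \neq 0$. Translating through the definition $\degxi(-) = \sup\{\xi_0 m + \xi_1 n \mid H^n(-)_m \neq 0\}$ and using Lemma \ref{xxlem3.1}(2) (which gives $\degxi(U[1]) = \degxi(U) - \xi_1$, equivalently the contribution of $H^{n+1}(U)$ at the cohomological slot $n$ is $\degxi(U) - \xi_1$), these three implications yield exactly: $\degxi(V) \leq \max\{\degxi(U), \degxi(W)\}$; $\degxi(U) \leq \max\{\degxi(V), \degxi(W) + \xi_1\}$; and $\degxi(W) \leq \max\{\degxi(U) - \xi_1, \degxi(V)\}$. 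The positivity $\xi_0 > 0$ is used only to ensure these suprema behave well with the internal grading (and in the cases where $\degxi$ is infinite the inequalities are trivial). The analogous statements for $\gedxi$ follow by the same argument with $\sup$ replaced by $\inf$ and inequalities reversed.

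Next I would apply this base case to each of $\reg_\xi \in \{\CMregxi, \Extregxi, \Torregxi\}$. For $\CMregxi$: apply $\R\Gamma_\fm$ to $X \to Y \to Z \to X[1]$ to get a distinguished triangle $\R\Gamma_\fm(X) \to \R\Gamma_\fm(Y) \to \R\Gamma_\fm(Z) \to \R\Gamma_\fm(X)[1]$, then invoke the base case with $U = \R\Gamma_\fm(X)$, etc. For $\Torregxi$: apply $\Bbbk \otimes^L_A -$ similarly. For $\Extregxi$: apply the contravariant $\RHom_A(-,\Bbbk)$, which turns $X \to Y \to Z \to X[1]$ into $\RHom_A(X[1],\Bbbk) \to \RHom_A(Z,\Bbbk) \to \RHom_A(Y,\Bbbk) \to \RHom_A(X,\Bbbk)$, i.e. a triangle $\RHom_A(Z,\Bbbk) \to \RHom_A(Y,\Bbbk) \to \RHom_A(X,\Bbbk) \to \RHom_A(Z,\Bbbk)[1]$ after using $\RHom_A(X[1],\Bbbk) \cong \RHom_A(X,\Bbbk)[-1]$. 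Since $\Extregxi = -\gedxi \circ \RHom_A(-,\Bbbk)$, the three $\gedxi$-inequalities for this triangle, negated, translate into precisely the three claimed inequalities for $\Extregxi$ — one should just check carefully that the role swap $X \leftrightarrow Z$ combined with the sign on $\gedxi$ and the shift by $\xi_1$ land correctly.

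I expect the main obstacle to be bookkeeping rather than conceptual: getting the shift-by-$\xi_1$ terms and the role reversals in the $\Extregxi$ case to come out exactly matching the stated inequalities (1)--(3), since applying a contravariant functor permutes which object plays which role in the triangle and also reverses inequalities via the $-\gedxi$. A secondary subtlety is handling the case where one of the regularities is $+\infty$ (or $\gedxi$ is $-\infty$); in every such case at least one side of the desired inequality is $+\infty$, so the inequality holds trivially, but this should be stated. A clean way to organize the writeup is to prove a single auxiliary lemma: for any distinguished triangle $U \to V \to W \to U[1]$ in $\D(A\Gr)$, $\degxi(V) \leq \max\{\degxi(U),\degxi(W)\}$, $\degxi(U) \leq \max\{\degxi(V), \degxi(W)+\xi_1\}$, $\degxi(W) \leq \max\{\degxi(U)-\xi_1, \degxi(V)\}$ (with dual statements for $\gedxi$), and then note that $\CMregxi$, $\Torregxi$, $\Extregxi$ are each $\pm\degxi$ (resp. $\pm\gedxi$) of a triangulated functor applied to the triangle, so the result is immediate.
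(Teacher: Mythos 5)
Your proposal is correct and takes essentially the same approach as the paper: both reduce to the long exact cohomology sequence attached to the image of the triangle under the relevant derived functor, then translate homological shifts into the $\pm\xi_1$ offsets via Lemma~\ref{xxlem3.1}(2). The only cosmetic difference is that the paper proves (1) directly for $\CMregxi$ and then deduces (2)--(3) by rotating the triangle, whereas you read all three inequalities straight off the LES; you also spell out the contravariance and $-\gedxi$ bookkeeping for $\Extregxi$, which the paper dismisses as ``similar.''
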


\begin{proof}
We will prove the assertions only for $\reg_{\xi}=\CMregxi$.
The other proofs are similar.

(1) Starting from the distinguished  triangle $X\to Y \to Z\to X[1]$, 
we obtain a distinguished triangle 
$$\R\Gamma_{\fm}(X)\to \R\Gamma_{\fm}(Y) \to \R\Gamma_{\fm}(Z)
\to \R\Gamma_{\fm}(X)[1]$$
which implies that there is a long exact sequence
$$\cdots \to H^{n-1}_{\fm}(Z)\to H^{n}_{\fm}(X)\to H^{n}_{\fm}(Y)
\to H^{n}_{\fm}(Z)\to H^{n+1}_{\fm}(X)\to \cdots.$$
Hence, for all integers $n$, we have $\deg H^{n}_{\fm}(Y)\leq 
\max\{ \deg H^{n}_{\fm}(X), \deg H^{n}_{\fm}(Z)\}$.
Since $\xi_0>0$, we obtain that 
$$\begin{aligned}
\xi_0 \deg H^{n}_{\fm}(Y)+ \xi_1 n
&\leq 
\max\{\xi_0 \deg H^{n}_{\fm}(X) +\xi_1 n,\xi_0 \deg H^{n}_{\fm}(Z)+\xi_1 n\}\\
&\leq \max\{\CMregxi(X), \CMregxi(Z)\}
\end{aligned} 
$$ 
for all $n$. This implies that 
$$\CMregxi(Y)\leq \max\{\CMregxi(X), \CMregxi(Z)\}$$
as desired. 

(2) By rotation, we have a distinguished  triangle
$Z[-1]\to X\to Y \to Z$. By part (1), we have 
$$\CMregxi(X)\leq \max\{ \CMregxi(Y), \CMregxi(Z[-1])\}.$$
The assertion now follows from Lemma \ref{xxlem3.1}(2).

(3) The proof is similar to the proof of part (2).
\end{proof}

Our next result is a generalization \cite[Proposition 5.6]{DW}.
We will use the following notation.
Let 
\begin{align}
\label{E3.7.1}\tag{E3.7.1}
Y:
&=\cdots \to 0\to F^{-w}\to F^{-(w-1)}\to \cdots \to F^{-1}
\to F^0\to 0\to \cdots\\
&=\cdots \to 0\to F_{w}\; \to \; F_{w-1}\; \to \; \cdots \; 
\to \;\; F_1 \; \to \; F_0 \; \to \; 0\to \cdots\notag
\end{align}
be a minimal free resolution of a complex in 
$\D^{\bb}_{\fg}(A\Gr)$ of finite projective dimension that is 
bounded below at position 0. Let 
$$Z=F_w[w],
\quad {\text{and}}\quad 
X=\cdots \to 0\to 0\to F_{w-1}\to \cdots \to F_1\to 
  F_0\to 0\to \cdots.$$
Observe that we have a distinguished triangle
\begin{equation}
\label{E3.7.2}\tag{E3.7.2}
X\to Y\to Z\to X[1].
\end{equation}
For each $0 \leq s \leq w,$ write 
\begin{equation}
\label{E3.7.3}\tag{E3.7.3}
F^{-s}=F_s:=\coprod_{j=1}^{n_s} A(-\sigma_{s,j})
\end{equation}
for some integers $\sigma_{s,j}$ and write 
\begin{equation}
\label{E3.7.4}\tag{E3.7.4}
\sigma_s:=\max_{1 \leq j \leq n_s}\{\sigma_{s,j}\}.
\end{equation}

\begin{lemma}
\label{xxlem3.8}
Retain the above notation {\rm{(\eqref{E3.7.1} - \eqref{E3.7.4})}}. 
Let $\xi_0>0$.
\begin{enumerate}
\item[(1)]
Then 
$$\Torregxi(Y)=\max_{0\leq s \leq w}\{\Torregxi(F_{s}[s])\}.$$
As a consequence,
$$\Torregxi(Y)=\max\{ \Torregxi(X),\Torregxi(Z)\}.$$
\item[(2)]
If $Z\neq 0$ and $\xi_1\ll 0$, then $\Torregxi(Y)=\Torregxi(Z)$.
\end{enumerate}
\end{lemma}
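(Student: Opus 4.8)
\textbf{Proof proposal for Lemma \ref{xxlem3.8}.}

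The plan is to compute $\Torregxi(Y)$ directly from the minimal free resolution $F$ of $Y$, exploiting the fact that $Y$ is quasi-isomorphic to $F$ and $F$ is itself a complex of free modules, so that $\Bbbk \otimes_A^L Y$ is computed by $\Bbbk \otimes_A F$ whose differential is zero (minimality). First I would observe that for a single free module $F_s = \coprod_{j=1}^{n_s} A(-\sigma_{s,j})$ placed in cohomological position $-s$ (i.e.\ the complex $F_s[s]$), we have $\Bbbk \otimes_A^L (F_s[s]) = \Bbbk^{n_s}$ sitting in cohomological degree $-s$ with internal degrees $\sigma_{s,1},\dots,\sigma_{s,n_s}$, so that $\Torregxi(F_s[s]) = \xi_0 \sigma_s - \xi_1 s$ (using $\xi_0 > 0$ and the formula in Definition \ref{xxdef2.3}). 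Since $\Bbbk \otimes_A F = \bigoplus_{s=0}^w \Bbbk^{n_s}[s]$ as a complex with zero differential, its $\xi$-weighted degree is the supremum over $s$ of the weighted degrees of the pieces, giving $\Torregxi(Y) = \max_{0 \le s \le w}\{\xi_0 \sigma_s - \xi_1 s\} = \max_{0 \le s \le w}\{\Torregxi(F_s[s])\}$. This proves the first display of part (1).

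For the "as a consequence" claim, I would note that $X$ has minimal free resolution $F_{w-1} \to \cdots \to F_0$ (the brutal truncation $F^{\geq -(w-1)}$, which is still minimal), so by the same computation $\Torregxi(X) = \max_{0 \le s \le w-1}\{\Torregxi(F_s[s])\}$, while $Z = F_w[w]$ is already free so $\Torregxi(Z) = \Torregxi(F_w[w])$. Splitting the maximum over $0 \le s \le w$ into the two ranges $0 \le s \le w-1$ and $s = w$ immediately gives $\Torregxi(Y) = \max\{\Torregxi(X), \Torregxi(Z)\}$. (Alternatively one could invoke Lemma \ref{xxlem3.7} applied to the triangle \eqref{E3.7.2} together with the first display to get $\leq$ and $\geq$, but the direct route via brutal truncation is cleaner.)

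For part (2), I would use the formula $\Torregxi(F_s[s]) = \xi_0 \sigma_s - \xi_1 s$ from part (1). Since $Z = F_w[w] \neq 0$ means $n_w \geq 1$ and $\sigma_w$ is a genuine integer, while each $\sigma_s$ for $0 \le s \le w-1$ is also a fixed integer, the quantity $\Torregxi(F_w[w]) - \Torregxi(F_s[s]) = \xi_0(\sigma_w - \sigma_s) - \xi_1(w - s)$ has the form (constant) $+ (-\xi_1)(w-s)$ with $w - s \geq 1$; hence for $\xi_1$ sufficiently negative (more precisely, $-\xi_1$ large enough to dominate $\max_s |\xi_0(\sigma_w-\sigma_s)|$, using the finitely many values of $s$), we have $\Torregxi(F_w[w]) > \Torregxi(F_s[s])$ for all $s < w$. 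Therefore the maximum in part (1) is attained uniquely at $s = w$, so $\Torregxi(Y) = \Torregxi(F_w[w]) = \Torregxi(Z)$.

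The main obstacle I anticipate is making sure the identification $\Torregxi(F_s[s]) = \xi_0\sigma_s - \xi_1 s$ is correctly bookkept — in particular getting the sign on $\xi_1 s$ right (the complex $F_s[s]$ lives in cohomological degree $-s$, and Definition \ref{xxdef2.3} gives $\Torregxi = \sup_i\{\xi_0 \deg \Tor_i - \xi_1 i\}$ with $\Tor_s$ contributing, so the homological contribution is $-\xi_1 s$), and correctly invoking minimality of $F$ and of its brutal truncation to conclude the differentials vanish after applying $\Bbbk \otimes_A -$. Everything else is a finite-max manipulation.
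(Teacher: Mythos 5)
Your proposal is correct and follows essentially the same route as the paper: both compute $\Torregxi(Y)$ from the formula $\Tor^A_s(\Bbbk,Y)=\coprod_j\Bbbk(-\sigma_{s,j})$ (which, as you correctly emphasize, relies on the minimality of $Y$ as a free complex so that the differential of $\Bbbk\otimes_A Y$ vanishes), yielding $\Torregxi(Y)=\max_{0\le s\le w}\{\xi_0\sigma_s-\xi_1 s\}=\max_s\{\Torregxi(F_s[s])\}$, and then deduce both the consequence in (1) by splitting off the $s=w$ term and part (2) by letting $-\xi_1$ dominate the finitely many differences $\xi_0(\sigma_w-\sigma_s)$. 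The only cosmetic caveat is that if some intermediate $F_s=0$ then $\sigma_s$ is $-\infty$ rather than a ``fixed integer,'' but such terms drop out of the max and do not affect the argument.
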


\begin{proof} (1) For each $0 \leq s \leq w$, write 
$F_s=\coprod_{j = 1}^{n_s} A(-\sigma_{s,j})$ as in \eqref{E3.7.3}. 
By definition,
$\Tor^A_s(\Bbbk, Y)=\coprod_{j=1}^{n_s} \Bbbk (-\sigma_{s,j})$
and consequently,
\begin{equation}
\label{E3.8.1}\tag{E3.8.1}
\Torregxi(Y)
= \max_{0 \leq s \leq w}\left\{ \xi_0 \max_{1 \leq j \leq n_s} 
  \{\sigma_{s,j}\} -\xi_1 s\right\}
=\max_{0 \leq s \leq w}\left\{ \xi_0\sigma_{s} -\xi_1 s\right\}.
\end{equation}
Since $\Torregxi(F_s[s]) = \xi_0 \sigma_{s} - \xi_1 s$, the main 
assertion follows, and the consequence follows from the main 
assertion.

(2) Since $Z\neq 0$, $\sigma_w:=\max_{1 \leq j \leq n_w} 
\{\sigma_{w,j}\}$ is finite. It follows from \eqref{E3.8.1} that, 
when $\xi_1\ll 0$, $\Torregxi(Y)=\sigma_w -\xi_1 w$. Since 
$\Torregxi(Z)=\sigma_w -\xi_1 w$ by definition, the assertion 
follows.
\end{proof}

We continue to use the notation introduced before Lemma 
\ref{xxlem3.8}.

\begin{lemma}
\label{xxlem3.9}
Retain the above notation. Suppose that $Z\neq 0$ and that 
$\xi_0>0$.
\begin{enumerate}
\item[(1)]
$$\CMregxi(Y)\leq \max\limits_{0\leq s \leq w}
\{\CMregxi(F_s[s])\}.$$
\item[(2)]
$$\depth(Y)\geq \min_{1\leq s\leq w} 
\{\depth(F_s[s])\}=-w+\depth(A)=\depth(Z).$$
\item[(3)]
Let $d=\depth(A)$ and $f=-w+\depth(A)$. Then 
$$\deg H^{f}_{\fm}(Y)\leq 
\deg H^{d}_{\fm}(A)+\sigma_{w}
=\deg H^f_{\fm}(F_w[w])=\deg H^f_{\fm}(Z).$$
\item[(4)]
Let $\alpha=\deg H^f_{\fm}(Z)
(=\deg H^{d}_{\fm}(A)+\sigma_{w})$. Then the natural map 
$$H^f_{\fm}(Z)_{\alpha}\to H^{f+1}_{\fm}(X)_{\alpha}
=H_{\fm}^{f}(X[1])_{\alpha}$$
is zero.
\item[(5)]
If $\xi_1\ll 0$, then 
$$\CMregxi(Y)=\max\limits_{0\leq s \leq w}
\{\CMregxi(F_s[s])\}=\CMregxi(F_w[w])=\CMregxi(Z).$$
\end{enumerate}
\end{lemma}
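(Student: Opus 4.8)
The plan is to prove the five parts essentially in the order they are stated, deriving each from the triangle \eqref{E3.7.2}, the behavior of $\R\Gamma_{\fm}$ under shifts, and the explicit description \eqref{E3.7.3} of the free modules $F_s$. First I would record the two elementary facts used throughout: for an AS Gorenstein algebra $A$ of depth $d = \depth(A)$, one has $\R\Gamma_{\fm}(A)$ concentrated (up to lower terms in the non-Cohen--Macaulay case only in cohomological degree $d$, with $H^d_{\fm}(A)$ of top degree $\deg H^d_{\fm}(A)$); and from Lemma \ref{xxlem3.1}(2), $\CMregxi(F_s[s]) = \CMregxi(A(-\sigma_s)[s])$ is controlled by $\xi_0(\deg H^{\bullet}_{\fm}(A) + \sigma_s) + \xi_1(\bullet - s)$. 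For part (1), I would apply $\R\Gamma_{\fm}$ to the bounded filtration of $Y$ by the brutal truncations $Y^{\geq -s}$, giving distinguished triangles $F_s[s] \to Y^{\geq -s} \to Y^{\geq -(s-1)} \to F_s[s][1]$ for each $s$; then iterate Lemma \ref{xxlem3.7}(1) up the filtration, picking up the $\xi_1$-shift terms but noting that in the $\CMregxi$ estimate these only help since we are taking a max over all $s$. The inequality $\CMregxi(Y)\le \max_{0\le s\le w}\{\CMregxi(F_s[s])\}$ drops out.

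For parts (2) and (3), the key input is that $F_s[s] = \coprod_j A(-\sigma_{s,j})[s]$, so $\R\Gamma_{\fm}(F_s[s])$ lives in cohomological degree $d - s$ with top degree $\deg H^d_{\fm}(A) + \sigma_{s,j}$; hence $\depth(F_s[s]) = -s + d$ and $\deg H^{-s+d}_{\fm}(F_s[s]) = \deg H^d_{\fm}(A) + \sigma_s$. The minimum of $-s + d$ over $1 \le s \le w$ is $-w + d = f$, attained only at $s = w$. The long exact sequence in local cohomology coming from \eqref{E3.7.2}, together with the fact that $X = F_{\le w-1}$ only contributes local cohomology in cohomological degrees $\geq -(w-1) + d = f + 1$, shows $H^f_{\fm}(Y) \cong H^f_{\fm}(Z)$ in a range and gives the depth bound of part (2) by an iterated application of Lemma \ref{xxlem3.7}(3) (tracking that each step raises the lower cohomological bound). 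Part (3)'s degree inequality $\deg H^f_{\fm}(Y) \le \deg H^f_{\fm}(Z)$ then follows from the same long exact sequence: $H^f_{\fm}(X) = 0$ for cohomological degree $f$ forces $H^f_{\fm}(Y) \hookrightarrow H^f_{\fm}(Z)$.

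Part (4) is where the \emph{minimality} of the free resolution enters, and I expect it to be the main obstacle. The map $H^f_{\fm}(Z)_{\alpha} \to H^{f+1}_{\fm}(X)_{\alpha}$ is the connecting map of the triangle \eqref{E3.7.2}, and it factors through the local-cohomology image of the resolution differential $F_w \to F_{w-1}$. Because the resolution is minimal, this differential has entries in $\fm$, i.e. strictly raises internal degree, so after applying $H^d_{\fm}(A) \otimes_A (-)$ the induced map on the top-degree piece (degree $\alpha = \deg H^d_{\fm}(A) + \sigma_w$) must vanish: any element of $H^d_{\fm}(A)$ in its top degree, multiplied by an entry of $\fm$, lands in degree $> \deg H^d_{\fm}(A)$ of $H^d_{\fm}(A)(-\sigma_{w-1,j})$, shifted to degree $> \deg H^d_{\fm}(A) + \sigma_{w-1,j} \geq$ nothing large enough to reach $\alpha$ unless $\sigma_{w-1,j} > \sigma_w$, which is impossible for a minimal resolution. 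I would make this precise by writing the connecting map explicitly via the spectral sequence $E_2^{m,n} = \Tor^A_{-m}(H^n_{\fm}(A), X)$ of \cite[Lemma 1.3]{Jo4} already used in the proof of Theorem \ref{xxthm3.5}, or more directly by noting $\R\Gamma_{\fm}(F_\bullet) = H^d_{\fm}(A) \otimes_A F_\bullet[d]$ and that minimality means the differentials in $\fm \cdot \Hom$ act by zero on the socle/top-degree layer of $H^d_{\fm}(A)$.

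Finally, part (5) combines everything: when $\xi_1 \ll 0$ the weight $-\xi_1 s$ is large and increasing in $s$, so $\max_{0\le s\le w}\{\CMregxi(F_s[s])\}$ is attained at $s = w$, giving $\CMregxi(F_w[w]) = \CMregxi(Z)$; combined with part (1) this yields $\CMregxi(Y) \le \CMregxi(Z)$, and the reverse inequality $\CMregxi(Y) \ge \CMregxi(Z)$ follows from part (3) and part (4): part (3) shows $H^f_{\fm}(Y)$ and $H^f_{\fm}(Z)$ have the same (finite) top degree $\alpha$ \emph{provided} $H^f_{\fm}(Y)_\alpha \ne 0$, and part (4) guarantees that the class in $H^f_{\fm}(Z)_\alpha$ survives (is not killed by the connecting map into $H^{f+1}_{\fm}(X)$), so $H^f_{\fm}(Y)_\alpha \ne 0$; since for $\xi_1 \ll 0$ the term $\xi_0\alpha + \xi_1 f$ dominates $\CMregxi(Z)$ and all other contributions to $\CMregxi(Y)$ are $\le \CMregxi(F_s[s]) < \CMregxi(F_w[w])$, we get $\CMregxi(Y) = \xi_0\alpha + \xi_1 f = \CMregxi(Z)$. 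Assembling the chain of equalities gives the statement.
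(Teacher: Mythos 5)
Your plan for parts (1), (2), (3), and (5) matches the paper's proof in substance: induction along the brutal-truncation filtration with Lemma~\ref{xxlem3.7}(1) for part (1); computing $\depth(F_s[s])=-s+\depth(A)$ and using $\inf(\R\Gamma_\fm(-))$ on the triangle for part (2); the vanishing $H^{\leq f}_\fm(X)=0$ forcing the injection $H^f_\fm(Y)\hookrightarrow H^f_\fm(Z)$ for part (3); and assembling (1)--(4) in the limit $\xi_1\ll 0$ for part (5). (Minor: in part (1) no $\xi_1$-shifts actually arise, since only Lemma~\ref{xxlem3.7}(1) is iterated; and in part (2) you cite Lemma~\ref{xxlem3.7}(3), which carries a $-\xi_1$ shift and has $\xi_0>0$, whereas what is really used is the elementary $\inf(\overline Y)\ge\min\{\inf(\overline X),\inf(\overline Z)\}$, i.e.\ the $\xi=(0,-1)$ case handled directly.)

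The genuine divergence is in part (4). The paper does \emph{not} compute the connecting map through the local cohomology of the differential directly; instead it observes that minimality forces $\im(Z\to X[1])\subseteq\fm X$, and from this constructs a subcomplex $U\subseteq X$ (replacing $F_{w-1}$ with its summands $A(-\sigma_{w-1,j})$ having $\sigma_{w-1,j}<\sigma_w$) through which $Z\to X[1]$ factors; the degree bound from part (3) applied to $U[1]$ then gives $\deg H^f_\fm(U[1])\le\alpha-1$, so the connecting map must vanish in degree $\alpha$. This route stays entirely at the level of complexes and degree bounds, and is valid without any Cohen--Macaulay assumption on $A$ (which is \emph{not} a hypothesis of the lemma). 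Your alternative argument identifies $\R\Gamma_\fm(F_\bullet)$ with $H^d_\fm(A)\otimes_A F_\bullet[-d]$, which requires $A$ Cohen--Macaulay and is therefore not available here; the spectral-sequence version you mention would repair this, but you leave it as a sketch. Also, your intermediate claim that ``$\sigma_{w-1,j}>\sigma_w$ is impossible for a minimal resolution'' is false---a minimal resolution can certainly have internal degrees that are not monotone across summands---and fortunately unnecessary: the correct reason the top-degree piece dies under the induced map is simply that $H^d_\fm(A)$ is bounded above, so any multiplication by a positive-degree element of $\fm$ pushes its top-degree component to zero. You should either adopt the paper's factorization through $U[1]$ or carefully set up the hypercohomology spectral sequence $E_1^{p,q}=H^q_\fm(Y^p)\Rightarrow H^{p+q}_\fm(Y)$ at the corner $(p,q)=(-w,d)$ and show the $d_1$-differential vanishes in degree $\alpha$ by the boundedness-above argument.
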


\begin{proof} 
(1) This follows by induction on $w$, \eqref{E3.7.2}, and  
Lemma \ref{xxlem3.7}(1).

(2) First of all it is clear that 
$$\depth(Z)=\depth(F_w[w])=
\depth\left(\coprod_{j=1}^{n_w} A(-\sigma_{w,j})[w]\right)
=-w+\depth(A).$$
It is obvious that $\min_{1\leq s\leq w} 
\{\depth(F_s[s])\}=-w+\depth(A)$.

Next, we prove $\depth(Y)\geq \min_{1\leq s\leq w} 
\{\depth(F_s[s])\}$ by induction on $w$. 
Let ${\overline{Y}}=\R\Gamma_{\fm}(Y)$, ${\overline{X}}=\R\Gamma_{\fm}(X)$, and 
${\overline{Z}}=\R\Gamma_{\fm}(Z)$.  By Example \ref{xxex2.2}(6), 
$\depth(Y)=\inf({\overline{Y}})$. A similar assertion holds for $X$ and $Z$. 
It follows from \eqref{E3.7.2} that there is a distinguished 
triangle
\begin{equation}
\label{E3.9.1}\tag{E3.9.1}
{\overline{X}}\to {\overline{Y}}\to {\overline{Z}}\to {\overline{X}}[1].
\end{equation}
Hence $\inf({\overline{Y}})\geq \min\{\inf({\overline{X}}),
\inf({\overline{Z}})\}$, consequently,
$$\begin{aligned}
\depth(Y)&=\inf({\overline{Y}})\geq \min\{\inf({\overline{X}}),
\inf({\overline{Z}})\}
=\min\{\depth(X),\depth(Z)\}.
\end{aligned}$$ 
By definition, $X$ is a version of $Y$ with $w$ being replaced
by $w-1$. Hence the first inequality follows by the above 
inequality and induction on $w$.

(3) It is clear that 
$$\deg H^f_{\fm}(Z)=\deg H^f_{\fm}(F_w[w])=
\deg H^{d}_{\fm}(A)+\sigma_{w}.$$ It remains to 
show that $\deg H_{\fm}^{f}(Y)\leq \deg H_{\fm}^{f}(Z)$. 

By part (2), 
$$\depth(X)\geq \min_{0\leq s\leq w-1} 
\{\depth(F_s[s])\}\geq -(w-1)+\depth(A)=f+1.$$ 
As a consequence, $H_{\fm}^{i}(X)=0$ for all $i\leq f$. 
Taking the cohomology group of \eqref{E3.9.1} gives rise to
a long exact sequence
\begin{equation}
\label{E3.9.2}\tag{E3.9.2}
\cdots \to 0\to 0(=H_{\fm}^{f}(X)) \to H_{\fm}^{f}(Y)\to 
H_{\fm}^{f}(Z) \to H_{\fm}^{f+1}(X)\to \cdots.
\end{equation}
Therefore $H_{\fm}^{f}(Y)$ is a graded subspace of 
$H_{\fm}^{f}(Z)$, consequently, $\deg H_{\fm}^{f}(Y)\leq 
\deg H_{\fm}^{f}(Z)$ as desired. 

(4) Since $Y$ is a minimal free complex, the image of the map 
$Z\to X[1]$ is in $\fm X$. Define a complex $U$ where 
$U^{s}=X^{s}$ for all $1\leq s\leq w-2$ and 
$U^{w-1}=\coprod_{\sigma_{w-1,j}<\sigma_w} A(-\sigma_{w-1,j})$. 
Then $U$ is a subcomplex of $X$ and the image of the map 
$Z\to X[1]$ is in $U[1]$. Hence the composition 
$Z\to U[1]\to X[1]$ is the map $Z\to X[1]$. It suffices to show 
that the map $H^f_{\fm}(Z)_{\alpha}\to H_{\fm}^{f}(U[1])_{\alpha}$ 
is zero. By part (3), 
$$
\deg H_{\fm}^{f}(U[1]) \leq \deg (H_{\fm}^{f}(U^{w-1}[w]))
\leq \deg H_{\fm}^d(A) +(\sigma_{w}-1)=\alpha - 1.
$$
This means that $H_{\fm}^{f}(U[1])_{\alpha}=0$, and consequently, 
the map $H^f_{\fm}(Z)_{\alpha}\to H_{\fm}^{f}(U[1])_{\alpha}$ 
is zero.

(5) By definition, $\CMregxi(Z)=\CMregxi(F_w[w])$. When 
$\xi_1\ll 0$, 
$$\CMregxi(F_w[w])
=\xi_0\deg(H_{\fm}^d(A)+\sigma_w)+\xi_1 (-w+\depth(A))
=\xi_0 \alpha+\xi_1 f.$$
It is clear that, when $\xi_1\ll 0$, 
$$\max\limits_{0\leq s \leq w}
\{\CMregxi(F_s[s])\}=\CMregxi(F_w[w])=\CMregxi(Z).$$
By part (1), it remains to show that 
$\CMregxi(Y)\geq \CMregxi(Z)$. By part (4) and \eqref{E3.9.2},
$H^{f}_{\fm}(Y)_{\alpha}\cong H^{f}_{\fm}(Z)_{\alpha}$. 
By the definition of $\alpha$ and $f$, 
$H^{f}_{\fm}(Z)_{\alpha}\neq 0$. So $\deg H^{f}_{\fm}(Y)
\geq \alpha$ (in fact $=$). Therefore
$\CMregxi(Y)\geq \xi_0 \alpha+\xi_1 f=\CMregxi(Z)$
as desired.
\end{proof}

We are now ready to prove Theorem \ref{xxthm0.4}.

\begin{theorem}
\label{xxthm3.10}
Retain the above notation. Let $W$ be nonzero in 
$\D^{\bb}_{\fg}(A\Gr)$ with finite projective dimension. 
Suppose $\xi_0>0$.
\begin{enumerate}
\item[(1)]
If $0\leq \xi_1\leq \xi_0$, then 
$$\CMregxi(W)=\Torregxi(W)+\CMregxi(A).$$
\item[(2)]
If $\xi_1\ll 0$, then 
$$\CMregxi(W)=\Torregxi(W)+\CMregxi(A).$$
\end{enumerate}
\end{theorem}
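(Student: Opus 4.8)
The plan is to reduce the general complex $W$ to the explicit resolution picture set up in equations \eqref{E3.7.1}--\eqref{E3.7.4}, and then run an induction on the projective dimension $w$ using the distinguished triangle \eqref{E3.7.2}. First I would invoke Lemma~\ref{xxlem3.1}(1) to rescale so that $\xi_0 = 1$, and note that after a degree shift $W(\ell)[m]$ we may assume $W$ is a minimal free complex $Y$ as in \eqref{E3.7.1}, bounded below at position $0$; by Lemma~\ref{xxlem3.1}(2), both sides of the claimed identity transform the same way under $[m]$ and $(\ell)$, so the reduction is harmless. The base case $w=0$ is immediate: $Y = F_0 = \coprod_j A(-\sigma_{0,j})$ is a direct sum of shifts of $A$, for which $\Torregxi(Y) = \max_j\{\xi_0\sigma_{0,j}\}$ and $\CMregxi(Y) = \max_j\{\xi_0\sigma_{0,j}\} + \CMregxi(A)$ by Example~\ref{xxex2.2}(2) and Lemma~\ref{xxlem3.1}(2), giving the identity directly.

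For the inductive step I would use the triangle $X \to Y \to Z \to X[1]$ with $Z = F_w[w]$ and $X$ the truncation. The inequality ``$\leq$'' in both parts comes from Theorem~\ref{xxthm3.5} together with $\Extregxi(Y) = \Torregxi(Y)$ (valid since $Y$ has a finitely generated minimal free resolution, by \cite[Remark 4.5]{DW}), so the real content is the reverse inequality $\CMregxi(W) \geq \Torregxi(W) + \CMregxi(A)$. By Lemma~\ref{xxlem3.8}(1), $\Torregxi(Y) = \max\{\Torregxi(X), \Torregxi(Z)\}$, and $\Torregxi(Z) = \CMregxi(Z) - \CMregxi(A)$ from the $w=0$ computation applied to $F_w[w]$ (using Lemma~\ref{xxlem3.1}(2) to handle the $[w]$ shift). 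So it suffices to show $\CMregxi(Y)$ dominates both $\CMregxi(Z)$ and $\Torregxi(X) + \CMregxi(A)$; the latter follows from the inductive hypothesis applied to $X$ (which is a ``$Y$ with $w$ replaced by $w-1$'') combined with Lemma~\ref{xxlem3.7}(1) giving $\CMregxi(Y) \le \max\{\CMregxi(X),\CMregxi(Z)\}$ — wait, that is the wrong direction, so instead I would argue: either $\CMregxi(Y) = \CMregxi(X)$ or $\CMregxi(Y) = \CMregxi(Z)$ is too optimistic, and the careful route is to track where the extremal cohomology class sits. This is exactly what Lemma~\ref{xxlem3.9}(3)--(5) is built for in the regime $\xi_1 \ll 0$ (part (2)): Lemma~\ref{xxlem3.9}(4) shows the extremal class in $H^f_\fm(Z)$ survives to $H^f_\fm(Y)$, so $\CMregxi(Y) \ge \CMregxi(Z) = \Torregxi(Z) + \CMregxi(A) \ge \Torregxi(Y) + \CMregxi(A)$, and combined with Lemma~\ref{xxlem3.9}(5) giving $\CMregxi(Y) = \CMregxi(Z)$ when $\xi_1 \ll 0$, part (2) closes.

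For part (1), where $0 \le \xi_1 \le \xi_0 = 1$, the situation is more delicate because $\CMregxi(Z)$ need not equal $\CMregxi(Y)$: contributions from lower $F_s$ may dominate. Here I would instead proceed by induction showing $\CMregxi(Y) = \max_{0\le s\le w}\{\CMregxi(F_s[s])\}$ and simultaneously $\CMregxi(F_s[s]) = \Torregxi(F_s[s]) + \CMregxi(A)$ for each $s$ (the latter again from the $w=0$ base case via Lemma~\ref{xxlem3.1}(2)), so that $\CMregxi(Y) = \max_s\{\Torregxi(F_s[s])\} + \CMregxi(A) = \Torregxi(Y) + \CMregxi(A)$ by Lemma~\ref{xxlem3.8}(1). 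The ``$\le$'' half of $\CMregxi(Y) = \max_s\{\CMregxi(F_s[s])\}$ is Lemma~\ref{xxlem3.9}(1). For the ``$\ge$'' half, suppose $s^\star$ achieves the maximum; I need the extremal class in $H^{-s^\star + d}_\fm(F_{s^\star}[s^\star])$ to survive into $H^\bullet_\fm(Y)$. The minimality of the resolution $Y$ is the crucial input: minimal differentials land in $\fm\cdot(-)$, so the connecting maps in the local cohomology long exact sequences cannot kill a top-degree class coming from the ``right end'' of the complex — this is the mechanism already exploited in Lemma~\ref{xxlem3.9}(4), and I would extend that argument inductively down the filtration of $Y$ by its brutal truncations, using the constraint $0 \le \xi_1 \le 1$ to ensure the weighted degrees of the relevant cohomology groups of successive truncations line up the right way (this is where the hypothesis $\xi_1 \le \xi_0$ enters: it guarantees that shifting a generator by one internal degree costs at least as much in the weighted count as shifting by one homological degree, so the ``diagonal'' $F_s[s]$ contributions can be compared).

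**The hard part** will be the reverse inequality in part (1): unlike part (2), where taking $\xi_1 \ll 0$ forces the top homological piece $F_w[w]$ to dominate and Lemma~\ref{xxlem3.9}(4)--(5) apply verbatim, in the range $0 \le \xi_1 \le \xi_0$ any of the $F_s[s]$ could be the dominant term, and one must show that \emph{whichever} one dominates, its extremal local-cohomology class is not annihilated by the spectral sequence / connecting maps assembling $\R\Gamma_\fm(Y)$ from the pieces. The key technical lever is the minimality of the free resolution together with the numerical constraint $\xi_1 \le \xi_0$; I expect the cleanest writeup packages this as a strengthened inductive statement (simultaneously proving the equality $\CMregxi(Y) = \max_s\{\CMregxi(F_s[s])\}$ and controlling which degree the extremum is attained in), mirroring but generalizing the bookkeeping in Lemma~\ref{xxlem3.9}.
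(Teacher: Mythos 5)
Your reduction (rescale to $\xi_0=1$, shift, minimality, base case $w=0$) and your treatment of part (2) via Lemma~\ref{xxlem3.8}(2) and Lemma~\ref{xxlem3.9}(5) are correct and essentially identical to the paper's. The substantive issue is part (1), where you have the right high-level shape but leave the genuinely hard step unresolved — and you say as much. Two things are worth comparing explicitly.

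First, the decomposition. You set up the induction via the Lemma~\ref{xxlem3.8}/\ref{xxlem3.9} triangle $X\to Y\to Z\to X[1]$ with $Z=F_w[w]$, i.e.\ peeling off the \emph{top} homological piece, which is the natural setup when $\xi_1\ll 0$ because that piece is then forced to dominate. The paper's proof of part (1) instead peels off the \emph{bottom} piece: after shifting so $W^n=0$ for $n\geq 1$, it takes $F$ a minimal free resolution, truncates $F^{\leq -1}$, and rotates to get $G\xrightarrow{\phi_2}F^0\to F\to G[1]$ with $G=F^{\leq -1}[-1]$ a shorter minimal complex, and inducts on projective dimension. With $a=\Torregxi(F^0)$, $b=\Torregxi(G)$, $c=\Torregxi(F)=\max\{a,b-\xi\}$, the argument is then a careful three-case analysis (Case 1: $c=a$, $a\geq b$; Case 2: $c=b-\xi$; Case 3: $c=a$, $a<b$, which splits further).

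Second — and this is the real gap — the paper's argument hinges on a Claim: if $b<a+1$, then the map $\tau_2 := H^j_\fm(\phi_2)_{\alpha-\xi j}: H^j_\fm(G)_{\alpha-\xi j}\to H^j_\fm(F^0)_{\alpha-\xi j}$ is not surjective, so the connecting map cannot kill the extremal class in $H^j_\fm(F^0)$. The proof of that Claim is precisely where minimality, the integrality of internal degrees, and $\xi<1$ enter: one projects $F^0\twoheadrightarrow A(-a)$ to isolate the top generator, notes that $F^{-1}$ is generated in degree $\leq a$ because $b<a+1$ and degrees are integers, and observes that minimality forces $\im\phi_2\subseteq\fm F^0$ so the composite lands in $\fm A(-a)$, which lives in degrees $\geq a+1$; hence the composite is zero and $\tau_2$ cannot be onto. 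Your proposal gestures at ``the extremal class survives by minimality'' and at extending Lemma~\ref{xxlem3.9}(4) inductively, but Lemma~\ref{xxlem3.9}(4) is specific to the top position and does not transport down the filtration without new work; what is actually needed is the Claim above, plus the Case-3 bookkeeping that shows $\CMregxi(F)\geq\max\{\alpha,\beta-\xi\}$ even when neither $F^0$ nor $G$ alone is dominant. You correctly identify that the hypothesis $\xi_1\leq\xi_0$ must intervene and that the mechanism is minimality, but you do not supply the degree estimate (``$b<a+1$'' via integrality) that makes the survival argument go through, so part (1) remains a sketch rather than a proof.
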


Note that \cite[Theorem 2.8]{KWZ2} is a special 
case of Theorem \ref{xxthm3.10} by taking $\xi=(1,1)$.
Part of the proof of Theorem \ref{xxthm3.10} below 
is similar to the proof of \cite[Theorem 2.8]{KWZ2}.

\begin{proof}[Proof of Theorem \ref{xxthm3.10}] 
(1) By Lemma \ref{xxlem3.1}(1), we may assume that $\xi_0=1$.
So the condition $\xi_1\leq \xi_0$ becomes $\xi:=\xi_1\leq 1$.
By Lemma~\ref{xxlem3.1}(5, 6), we can assume that $\xi\neq 0,1$.

By Lemma \ref{xxlem3.1}(2), after a complex shift, we may 
assume that $W^n=0$ for all $n\geq 1$. Let $F$ be a minimal
free resolution of $W$, so it can be written as
$$F: \qquad \cdots \to 0\to F^{-s}\xrightarrow{d^{-s}} \cdots 
\to F^{-1} \xrightarrow{d^{-1}} F^0\to 0\to \cdots$$
for some $s\geq 0$. (Note that this $F$ is different from the 
complex introduced in \eqref{E3.7.1}.) We will prove the 
assertion by induction on $s$, which is 
the projective dimension of $W$. 

For the initial step, we assume that $s=0$, or 
$W= F^0= \bigoplus_{i} A(-a_i)$ for some integers $a_i$. 
In this case, it is clear that 
$$\Torregxi(W)=\Torregxi(F^0)=\max_i\{a_i\}=:a.$$
By Lemma \ref{xxlem3.1}(2),
$$\begin{aligned}
\CMregxi(W)& =\CMregxi\left(\bigoplus_{i} A(-a_i)\right)=
\max_i\left\{\CMregxi(A(-a_i))\right\}\\
&=\CMregxi(A)+\max_i\{a_i\}=\CMregxi(A)+\Torregxi(W).
\end{aligned}
$$ 
So the assertion holds for $W=F^0$ as required.

For the inductive step, assume that $s>0$. Let $F^{\leq -1}$ be 
the brutal truncation of the complex $F$
$$F^{\leq -1}:\qquad \cdots \to 0\to F^{-s}\to \cdots \to F^{-1} 
\to 0\to 0\to \cdots,$$
which is obtained by replacing $F^0$ with 0. We have a 
distinguished triangle in $\D^{\bb}_{\fg}(A\Gr)$
\begin{equation}
\label{E3.10.1}\tag{E3.10.1}
F^0\xrightarrow{\; f\; } F \to F^{\leq -1}\to F^0[1]
\end{equation}
where $F^0$ is viewed as a complex concentrated at position 0 
and $f$ is the inclusion. Let $G$ be the complex 
$F^{\leq -1}[-1]$, which is a minimal free complex concentrated 
in position $\{-(s-1), \cdots, 0\}$. Then we have a 
distinguished triangle in $\D^{\bb}_{\fg}(A\Gr)$
\begin{equation}
\label{E3.10.2}\tag{E3.10.2}
G\xrightarrow{\;\phi_2\;} 
F^0\xrightarrow{\; f\; }  F \to G[1]
\end{equation}
obtained by rotating \eqref{E3.10.1}. By the induction hypothesis,
the assertion holds for both $G$ and $F^0$. We need to show 
that the assertion holds for $W$, or equivalently, for $F$, as 
$F\cong W$ in $\D^{\bb}_{\fg}(A\Gr)$. By Theorem \ref{xxthm0.3}(2)
(=Theorem \ref{xxthm3.5}), it suffices to show  
\begin{equation}
\label{E3.10.3}\tag{E3.10.3}
\CMregxi(F)\geq \Torregxi(F)+\CMregxi(A).
\end{equation}

We fix the following temporary notation:
$$a=\Torregxi(F^0), \quad b=\Torregxi(G), \quad 
c=\Torregxi(F)=\Torregxi(W),$$
and 
$$\alpha=\CMregxi(F^0), \quad \beta=\CMregxi(G), \quad
\gamma=\CMregxi(F)=\CMregxi(W).$$
Note that $a$ is an integer. 
By definition, the minimality of $F$, 
and Lemma \ref{xxlem3.8}(1), we have
$$
c=\max\left\{ \Torregxi(F^0),\Torregxi(F^{\leq -1})\right\}
=\max\{a,b-\xi\}.
$$
Therefore, we have
\begin{equation}
\label{E3.10.4}\tag{E3.10.4}
a\leq c \quad
{\text{and}} \quad b - \xi \leq c.
\end{equation}
There are only three cases to consider:
\begin{enumerate}
\item[] \textbf{Case 1.} $c = a$ and $a \geq b$,
\item[] \textbf{Case 2.} $c = b - \xi$ (where  $0<\xi <1$ by 
                         the first paragraph), and  
\item[] \textbf{Case 3.} $c = a$ and $a < b$.
\end{enumerate}

\bigskip

\noindent\textbf{Case 1:} 
Suppose that $c = a$ and $a \geq b$. By the definition of $a$, 
we have $F^0=A(-a) \oplus C^0$ where $C^0$ is a graded left free 
$A$-module. Let $\phi_1: F^0\to A(-a)$ be the corresponding 
projection. By the definition of $\alpha:=\CMregxi(F^0)$, 
there is an integer  $j\in {\mathbb Z}$ such that 
$H^j_{\fm}(F^0)_{\alpha -\xi j}\neq 0$ and the induced
projection
$$\tau_1:=H^j_{\fm}(\phi_1)_{\alpha-\xi j}:
\quad H^j_{\fm}(F^0)_{\alpha -\xi j}\to 
H^j_{\fm}(A(-a))_{\alpha -\xi j}$$
is nonzero. The triangle \eqref{E3.10.2} gives rise to a long 
exact sequence 
\begin{equation}
\label{E3.10.5}\tag{E3.10.5}
\cdots 
\to  H^j_{\fm}(G)_{\alpha-\xi j}
\xrightarrow{\tau_2} H^j_{\fm}(F^0)_{\alpha -\xi j}
\to H^{j}_{\fm}(F)_{\alpha-\xi j}
\to H^{j+1}_{\fm}(G)_{\alpha-\xi j}
\to \cdots.
\end{equation}
Let $\phi_2: G\to F^0$ as in \eqref{E3.10.2}. If
$$\tau_2:=H^j_{\fm}(\phi_2)_{\alpha-\xi j}:
\quad H^j_{\fm}(G)_{\alpha-\xi j}
\to H^j_{\fm}(F^0)_{\alpha -\xi j}$$
is not surjective, then \eqref{E3.10.5} implies that 
$H^{j}_{\fm}(F)_{\alpha-\xi j}\neq 0$. By definition, the 
assumption that $a= c$, and the induction hypothesis, we have 
$$\begin{aligned}
\CMregxi(F) &\geq \alpha-\xi j +\xi j=\alpha\\
&=a+\CMregxi(A)= c+\CMregxi(A)\\
&=\Torregxi(F)+\CMregxi(A)
\end{aligned}$$
as desired. It remains to show that $\tau_2$ is not surjective. 
This follows from the following claim.

\bigskip

\noindent
{\bf Claim:} If $b<a+1$ (which covers Case 1), then $\tau_2:
=H^j_{\fm}(\phi_2)_{\alpha-\xi j}$ is not surjective.

\bigskip

\noindent
{\bf Proof of Claim:}
Assume to the contrary that $\tau_2$ is surjective. Then so is 
the composition
$$\tau_3:= \tau_1\circ \tau_2: \quad 
H^j_{\fm}(G)_{\alpha-\xi j}
\to H^j_{\fm}(A(-a))_{\alpha -\xi j}.$$
In particular, $\tau_3$ is not the zero map. Note that 
$$\tau_3= \tau_1\circ \tau_2=
H^j_{\fm}(\phi_1)_{\alpha-\xi j}
\circ H^j_{\fm}(\phi_2)_{\alpha-\xi j}
=H^j_{\fm}(\phi_1\circ \phi_2)_{\alpha-\xi j},$$
which implies that $\phi_3:=\phi_1\circ \phi_2$ is nonzero
in $\D^{\bb}_{\fg}(A\Gr)$. Consider $F$ as the cone of the map 
$\phi_2: G\to F^0$; it is clear that $\phi_2$ is the map from 
the top row $G$ to the middle row $F^0$ in the following 
diagram
$$\begin{CD}
F^{-s} @>>> \cdots @>>> F^{-2} @>>> F^{-1} @>>> 0\\
@V 0 VV @. @V 0VV @VV d^{-1}=\phi_2 V\\
0 @>>> \cdots @>>> 0 @>>> F^{0} @>>> 0\\
@V 0 VV @. @V 0VV @VV \phi_1 V\\
0 @>>> \cdots @>>> 0 @>>> A(-a) @>>> 0.
\end{CD}
$$
Note that $F^{-1}$ is the zeroth term in the minimal free 
resolution of $G$. Since $b< a+1$, $F^{-1}$ is generated in 
degree $< a+1$. Since $a$ is an integer, $F^{-1}$ is generated 
in degree $\leq a$. Since $F$ is a minimal free resolution 
of $W$, $\im \phi_2\subseteq \fm F^0$, and consequently, 
$\im \phi_3\subseteq \fm A(-a)$. For every generator $x$ in 
$F^{-1}$ which has degree $\leq a$, the image $\phi_3(x)$ lies 
in $\fm A(-a)$, which has degree at least $a+1$. Therefore 
$\phi_3(x)=0$. This implies that $\phi_3(F^{-1})=0$, yielding a 
contradiction. So we have proved the claim.

\bigskip

\noindent\textbf{Case 2:} Suppose $c = b - \xi$. Since $\xi>0$ 
by hypothesis, $c<b$. By the definition of $\beta:=\CMregxi(G)$, 
there is an integer $j\in {\mathbb Z}$ such that 
$H^j_{\fm}(G)_{\beta-\xi j}\neq 0$. The triangle \eqref{E3.10.2} 
gives rise to a long exact sequence
\begin{equation}
\label{E3.10.6}\tag{E3.10.6}
\cdots \to H^{j-1}_{\fm}(F)_{\beta-\xi j}\to
H^j_{\fm}(G)_{\beta-\xi j}
\to H^j_{\fm}(F^0)_{\beta -\xi j}\to \cdots.
\end{equation}
By the  induction hypothesis, the assumption that $c<b$, 
\eqref{E3.10.4}, and the definitions of $\alpha, \beta, a$ 
and $b$, we have
$$\begin{aligned}
\beta&=\CMregxi(G)=\Torregxi(G)+\CMregxi(A)
=b+\CMregxi(A)\\
&>c+\CMregxi(A)\geq a+\CMregxi(A)=\CMregxi(F^0)\\
&=\alpha,
\end{aligned}
$$
which implies that $H^j_{\fm}(F^0)_{\beta -\xi j}=0$. 
Since $H^j_{\fm}(G)_{\beta-\xi j}\neq 0$ by definition,
\eqref{E3.10.6} implies that $H^{j-1}_{\fm}(F)_{\beta-\xi j}\neq 0$.
By definition, $\CMregxi(F)\geq \beta-\xi j+\xi(j-1)=\beta-\xi$.
This inequality implies that
$$\begin{aligned}
\CMregxi(F)&\geq \beta-\xi=\CMregxi(G)-\xi\\
&=\Torregxi(G)+\CMregxi(A)-\xi=b+\CMregxi(A)-\xi\\
&=c+\CMregxi(A)=\Torregxi(F)+\CMregxi(A),
\end{aligned}
$$
as desired, see \eqref{E3.10.3}.

\bigskip

\noindent \textbf{Case 3:} Finally, suppose that $c = a$ and 
$a < b$. Since $c=\max\{a, b-\xi\}$, we must have $b-\xi
\leq c=a<b$. If $b-\xi=c$, the assertion follows 
from Case 2. It remains to show the assertion when 
$b-\xi<c=a<b$. Recall that $0<\xi<1$.

By the definition of $\beta:=\CMregxi(G)$, there is an integer 
$j\in {\mathbb Z}$ such that $H^j_{\fm}(G)_{\beta-\xi j}\neq 0$. 
Since $a<b$, by the  induction hypothesis, $\alpha<\beta$, 
consequently, $H^j_{\fm}(F^0)_{\beta -\xi j}=0$. By 
\eqref{E3.10.6}, we obtain that
$H^{j-1}_{\fm}(F)_{\beta-\xi j}\neq 0$. 
Therefore
$$\CMregxi(F)\geq \beta -\xi j +\xi(j-1)=\beta-\xi.$$

By the definition of $\alpha:=\CMregxi(F^0)$, there is another 
integer, still denoted by $j$, such that 
$H^j_{\fm}(F^0)_{\alpha-\xi j}\neq 0$. Since $b-\xi< a$ or 
$b<a+\xi$, by the induction hypothesis, $\beta <\alpha+\xi$, 
consequently, $H^{j+1}_{\fm}(G)_{\alpha -\xi j}=0$. 
Since $b<a+\xi<a+1$, by Claim, $\tau_2$ is not surjective.
By \eqref{E3.10.5}, we obtain that 
$H^{j}_{\fm}(F)_{\alpha-\xi j}\neq 0$. Therefore
$$\CMregxi(F)\geq \alpha -\xi j +\xi j=\alpha.$$

Combining the two previous inequalities, using the induction 
hypothesis and the fact that $c=\max\{a, b-\xi\}$, we obtain that
$$\begin{aligned}
\CMregxi(F)&\geq \max\{\alpha, \beta-\xi\}\\
&=\max\{a +\CMregxi(A), b-\xi +\CMregxi(A)\}\\
&=\max\{a,b-\xi\}+\CMregxi(A)\\
&=c+\CMregxi(A)=\Torregxi(F)+\CMregxi(A)
\end{aligned}
$$
as desired.

Combining these three cases completes the proof.

\bigskip

(2) We will use the notation introduced before Lemma 
\ref{xxlem3.8}. 

First of all, when $W=A$, we have
$$\begin{aligned}
\CMregxi(W)&=\CMregxi(A)=0+\CMregxi(A)
=\Torregxi(A)+\CMregxi(A)\\
&=\Torregxi(W)+\CMregxi(A),
\end{aligned}
$$
so the assertion holds for $W=A$. By Lemma \ref{xxlem3.1}(1),
the assertion holds for 
$W=Z:=[\coprod_{j=1}^{n_w} A(-\sigma_{w,j})][w]$. When 
$\xi_1\ll 0$, we have, by setting $W=Y$, 
$$\begin{aligned}
\CMregxi(W)&=\CMregxi(Y)
=\CMregxi(Z) \quad {\text{by Lemma \ref{xxlem3.9}(5)}}\\
&=\Torregxi(Z)+\CMregxi(A)\\
&=\Torregxi(Y)+\CMregxi(A) 
\; \quad {\text{by Lemma \ref{xxlem3.8}(2)}}\\
&=\Torregxi(W)+\CMregxi(A)
\end{aligned}
$$
as desired.
\end{proof}

\begin{remark}
\label{xxrem3.11} 
Retain the hypotheses of Theorem~\ref{xxthm3.10}. 
\begin{enumerate}
\item[(1)]
If $\xi_1>\xi_0>0$, the conclusion of Theorem~\ref{xxthm3.10}(1),
\begin{equation}
\label{E3.11.1}\tag{E3.11.1}
\CMregxi(W)=\Torregxi(W)+\CMregxi(A)
\end{equation}
may fail to hold for some $W$. For example, let $A$ be a 
noetherian Koszul AS regular algebra of type $(d,\bfl)$ with 
$d=\bfl\geq 1$. By Example \ref{xxex2.2}(2), 
$\CMregxi(A)=\xi_1 d-\xi_0\bfl=(\xi_1-\xi_0)d$. It is easy to 
check that $\Torregxi(\Bbbk)=0$ when $\xi_1>\xi_0>0$. By 
Example \ref{xxex2.2}(1), $\CMregxi(\Bbbk)=0$. Then 
$$\CMregxi(\Bbbk)=0< (\xi_1-\xi_0)d
=\Torregxi(\Bbbk)+\CMregxi(A).$$
\item[(2)]
By Theorem \ref{xxthm3.10}(2) \eqref{E3.11.1} holds for all 
$\xi_1 \ll 0$. It is unknown if equation \eqref{E3.11.1}
holds for all $\xi_1<0$. 
\end{enumerate}
\end{remark}

Next we recover the  theorem of Auslander and Buchsbaum 
\cite[Theorem 3.2]{Jo2} as a special case of Theorem 
\ref{xxthm3.10}.

\begin{corollary}
\label{xxcor3.12} 
Let $A$ be a noetherian connected graded algebra with balanced
dualizing complex. Let $X$ be a nonzero object in 
$\D^{\bb}_{\fg}(A\Gr)$ with finite projective dimension. 
\begin{enumerate}
\item[(1)]{\rm [}The Auslander--Buchsbaum Formula{\rm{]}}
$$\pdim (X)+\depth(X)=\depth(A).$$
\item[(2)]
Let $p(X) = \pdim (X)$ and $d(X) = \depth(X)$. Then
\begin{equation}
\notag
\deg H^{d(X)}_{\fm}(X)
=\deg \Tor^A_{p(X)}(\Bbbk, X)+\deg H^{d(A)}_{\fm}(A).
\end{equation}
\end{enumerate}
\end{corollary}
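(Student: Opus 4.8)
The plan is to deduce both parts at once from Theorem~\ref{xxthm3.10}(2) by letting the weight run off to $-\infty$ along the ray $\xi=(1,\xi_1)$. First I would record what the two sides of the identity in Theorem~\ref{xxthm3.10}(2) look like for $\xi_1\ll 0$. Since $X$ is nonzero in $\D^{\bb}_{\fg}(A\Gr)$ and $A$ has a balanced dualizing complex, Theorem~\ref{xxthm1.4}(2) shows that $\R\Gamma_{\fm}(X)$ is a nonzero bounded complex with finitely generated cohomology, so only finitely many $H^i_{\fm}(X)$ are nonzero and each has finite degree; the least index with $H^i_{\fm}(X)\neq 0$ is $d(X)=\depth(X)$ by Example~\ref{xxex2.2}(6). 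Hence in $\CMreg_{(1,\xi_1)}(X)=\sup_i\{\deg H^i_{\fm}(X)+\xi_1 i\}$ the term $\xi_1 i$ dominates at the smallest such $i$ once $\xi_1$ is sufficiently negative, giving
$$\CMreg_{(1,\xi_1)}(X)=\deg H^{d(X)}_{\fm}(X)+\xi_1\, d(X),$$
and likewise $\CMreg_{(1,\xi_1)}(A)=\deg H^{d(A)}_{\fm}(A)+\xi_1\, d(A)$ with $d(A)=\depth(A)$. Similarly, since $X$ has finite projective dimension, $\Bbbk\otimes^{L}_{A}X$ is a bounded complex of finite-dimensional graded vector spaces, so only finitely many $\Tor^A_i(\Bbbk,X)$ are nonzero, each of finite degree, and $p(X)=\pdim(X)$ is the largest such index by Example~\ref{xxex2.4}(3); thus in $\Torreg_{(1,\xi_1)}(X)=\sup_i\{\deg\Tor^A_i(\Bbbk,X)-\xi_1 i\}$ the term $-\xi_1 i$ dominates at $i=p(X)$ for $\xi_1\ll 0$, giving $\Torreg_{(1,\xi_1)}(X)=\deg\Tor^A_{p(X)}(\Bbbk,X)-\xi_1\, p(X)$.

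Next I would substitute these three evaluations into the identity $\CMreg_{(1,\xi_1)}(X)=\Torreg_{(1,\xi_1)}(X)+\CMreg_{(1,\xi_1)}(A)$ of Theorem~\ref{xxthm3.10}(2) (taking $\xi_1$ below the more negative of the two thresholds involved), obtaining
$$\deg H^{d(X)}_{\fm}(X)+\xi_1\, d(X)=\deg\Tor^A_{p(X)}(\Bbbk,X)+\deg H^{d(A)}_{\fm}(A)+\xi_1\bigl(d(A)-p(X)\bigr)$$
for all $\xi_1$ in an interval $(-\infty,N)$. Both sides are affine functions of $\xi_1$ agreeing on an infinite set, hence are identical: equating slopes gives $d(X)=d(A)-p(X)$, i.e.\ $\pdim(X)+\depth(X)=\depth(A)$, which is part~(1); equating constant terms gives $\deg H^{d(X)}_{\fm}(X)=\deg\Tor^A_{p(X)}(\Bbbk,X)+\deg H^{d(A)}_{\fm}(A)$, which is part~(2). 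Equivalently one may take $\lim_{\xi_1\to-\infty}\frac{1}{\xi_1}(-)$ of the identity to extract (1), and then, knowing (1), observe that the $\xi_1$-terms cancel so that the identity collapses to (2); this is the passage to the limit advertised in Remark~\ref{xxrem0.5}(3),(4).

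The finiteness points used above are harmless: $p(X)<\infty$ by hypothesis, while $d(X)$ and $d(A)$ are finite because the relevant local cohomology complexes are bounded and nonzero by Theorem~\ref{xxthm1.4}(2), and the degrees appearing are finite by the same theorem together with the facts that $\Tor^A_{p(X)}(\Bbbk,X)\neq 0$ and $H^{d(X)}_{\fm}(X)\neq 0$ by definition of $p(X)$ and $d(X)$. The only step that genuinely needs attention is the dominance claim: that for $\xi_1$ sufficiently negative the suprema defining $\CMreg_{(1,\xi_1)}(X)$, $\CMreg_{(1,\xi_1)}(A)$, and $\Torreg_{(1,\xi_1)}(X)$ are each attained at the extreme homological index. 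This is exactly where the bounded-cohomology hypothesis (via the balanced dualizing complex) and finite projective dimension are indispensable, and it is the part I would write out carefully, since without it the limiting argument breaks down.
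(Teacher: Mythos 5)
Your proof is correct and is essentially the paper's argument: both substitute the explicit formulas for $\CMreg_{(1,\xi_1)}(X)$, $\Torreg_{(1,\xi_1)}(X)$, and $\CMreg_{(1,\xi_1)}(A)$ valid for $\xi_1\ll 0$ into the identity of Theorem~\ref{xxthm3.10}(2) and compare coefficients of an affine function of $\xi_1$. The paper phrases this as "comparing the coefficients on $\xi_0$ and $\xi_1$ as $\xi_1$ varies," which is precisely your observation that two affine functions agreeing on an interval must agree identically.
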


\begin{proof} 
Let $\xi_0>0$ be fixed. When $\xi_1\ll 0$, by definition,
$$\begin{aligned}
\CMregxi(X)
&=\sup_{i\in {\mathbb Z}} \{\xi_0 \deg(H^i_{\fm}(X))+\xi_1 i\}\\
&=\xi_0 \deg(H^{d(X)}_{\fm}(X))+\xi_1 d(X),\\
\Torregxi(X)
&=\sup_{i\in {\mathbb Z}}
   \{ \xi_0 \deg(\Tor^A_{i}(\Bbbk, X))-\xi_1 i\}\\
&=\xi_0 \deg(\Tor^A_{p(X)}(\Bbbk, X))-\xi_1 p(X).
\end{aligned}
$$
As a special case of the first equation, when $\xi_1\ll 0$,
$$\CMregxi(A)=\xi_0 \deg(H^{d(A)}_{\fm}(A))+\xi_1 d(A).$$

By Theorem \ref{xxthm3.10}, we have, for all $\xi_1\ll 0$,
$$\CMregxi(X)=\Torregxi(X)+\CMregxi(A),$$
or equivalently,
{\small $$\xi_0 \deg(H^{d(X)}_{\fm}(X))+\xi_1 d(X)
=\xi_0 \deg(\Tor^A_{p(X)}(\Bbbk, X))-\xi_1 p(X)
+\xi_0 \deg(H^{d(A)}_{\fm}(X))+\xi_1 d(A).$$}
Therefore both parts (1) and (2) follow 
from the above equation by comparing the coefficients on
$\xi_0$ and $\xi_1$ \textcolor{blue}{as $\xi_1$ varies.}.
\end{proof}

To conclude this section, we generalize \cite[Theorem 3.1]{Jo4} 
by removing the Koszul assumption.  As noted after 
Definition~\ref{xxdef2.1}, if $M$ is a finitely generated left 
$A$-module then $\CMregxi(M)$ is finite.

\begin{theorem}
\label{xxthm3.13}
Let $\xi:=(1, \xi_1)$, $\epsilon:=\max\{0, \xi_1-1\}$, and 
$c:=\Torregxi(\Bbbk)$.
Let $M$ be a nonzero finitely generated graded left $A$-module.
Then for every integer $s\geq \CMregxi(M)$, 
we have
$$i \leq \ged \Tor^A_i(\Bbbk, M_{\geq s}(s)) 
\leq \deg \Tor^A_i(\Bbbk, M_{\geq s}(s)) 
     \leq c+\epsilon+ i \xi_1$$
whenever $\Tor^A_i(\Bbbk, M_{\geq s}(s))\neq 0$.
\end{theorem}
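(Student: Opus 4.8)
The plan is to break the triple inequality into its three pieces. Set $N:=M_{\geq s}(s)$; since $A$ is noetherian this is a finitely generated graded left $A$-module, and by construction it is concentrated in non-negative internal degrees: $N_d=M_{d+s}$ for $d\geq 0$ and $N_d=0$ for $d<0$. We may assume $N\neq 0$, as otherwise every $\Tor^A_i(\Bbbk,N)$ vanishes and there is nothing to prove. The middle inequality $\ged\Tor^A_i(\Bbbk,N)\leq\deg\Tor^A_i(\Bbbk,N)$ is automatic for any nonzero graded vector space. For the lower bound I would take a minimal free resolution $F_\bullet\to N$; minimality means $\im(F_i\to F_{i-1})\subseteq\fm F_{i-1}$, and since $\fm=A_{\geq 1}$ and $N$ is concentrated in degrees $\geq 0$, an easy induction on $i$ shows $F_i$ is generated in internal degrees $\geq i$. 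Hence $\Tor^A_i(\Bbbk,N)\cong\Bbbk\otimes_A F_i$ is concentrated in degrees $\geq i$, giving $\ged\Tor^A_i(\Bbbk,N)\geq i$ whenever $\Tor^A_i(\Bbbk,N)\neq 0$.

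For the upper bound I would first reduce everything to an estimate on $\CMregxi(N)$. Since $\xi_0=1>0$ and $N$ is a nonzero object of $\D^{\bb}_{\fg}(A\Gr)$, Theorem~\ref{xxthm3.3} yields
\[
\Torregxi(N)\ \leq\ \CMregxi(N)+\Torregxi(\Bbbk)\ =\ \CMregxi(N)+c .
\]
Unwinding the definition of $\Torregxi$ with $\xi_0=1$ gives $\deg\Tor^A_i(\Bbbk,N)\leq\Torregxi(N)+\xi_1 i\leq\CMregxi(N)+c+\xi_1 i$ for every $i$, so the whole upper bound follows once one shows $\CMregxi(N)\leq\epsilon$.

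To prove $\CMregxi\bigl(M_{\geq s}(s)\bigr)\leq\epsilon$, I would use the short exact sequence $0\to M_{\geq s}\to M\to M_{<s}\to 0$, in which $M_{<s}$ is finite-dimensional with $\deg(M_{<s})\leq s-1$ (when it is nonzero), so that $\R\Gamma_{\fm}(M_{<s})=M_{<s}$ is concentrated in cohomological degree $0$. The associated long exact sequence of local cohomology gives isomorphisms $H^i_{\fm}(M_{\geq s})\cong H^i_{\fm}(M)$ for $i\geq 2$, an inclusion $H^0_{\fm}(M_{\geq s})\hookrightarrow H^0_{\fm}(M)$, and an exact strand $M_{<s}\to H^1_{\fm}(M_{\geq s})\to H^1_{\fm}(M)\to 0$. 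Since $s\geq\CMregxi(M)$ and $\xi_0=1$, we have $\deg H^i_{\fm}(M)+\xi_1 i\leq s$ for all $i$. This bounds $\deg H^i_{\fm}(M_{\geq s})+\xi_1 i$ by $s$ for $i=0$ and for $i\geq 2$, while for $i=1$ the exact strand gives
\[
\deg H^1_{\fm}(M_{\geq s})+\xi_1\ \leq\ \max\bigl\{(s-1)+\xi_1,\ \deg H^1_{\fm}(M)+\xi_1\bigr\}\ \leq\ \max\{s-1+\xi_1,\ s\}\ =\ s+\epsilon .
\]
Hence $\CMregxi(M_{\geq s})\leq s+\epsilon$, and Lemma~\ref{xxlem3.1}(2) applied to the internal shift by $(s)$ (with $\xi_0=1$) yields $\CMregxi(N)=\CMregxi(M_{\geq s})-s\leq\epsilon$, which completes the upper bound.

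The substantive ingredient is Theorem~\ref{xxthm3.3}; everything else is elementary manipulation. The step that needs the most care is the $i=1$ term in the local cohomology sequence: it is exactly the image of $M_{<s}$ in $H^1_{\fm}(M_{\geq s})$ that produces the correction term $\epsilon=\max\{0,\xi_1-1\}$, and one must also handle the degenerate cases ($M_{<s}=0$, or vanishing of individual $H^i_{\fm}$) using the convention $\deg 0=-\infty$ so that the bound is not lost.
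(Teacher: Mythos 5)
Your proof is correct and follows essentially the same route as the paper: reduce the upper bound to showing $\Torregxi(M_{\geq s}(s))\leq c+\epsilon$, bound $\CMregxi(M_{\geq s})\leq s+\epsilon$ via the short exact sequence $0\to M_{\geq s}\to M\to M/M_{\geq s}\to 0$, then apply Theorem~\ref{xxthm3.3} and the shift formula in Lemma~\ref{xxlem3.1}(2). The only cosmetic difference is that the paper invokes the packaged triangle estimate $\CMregxi(M_{\geq s})\leq\max\{\CMregxi(M),\,\CMregxi(M/M_{\geq s})+\xi_1\}$ from Lemma~\ref{xxlem3.7}(2), whereas you unfold the same bound directly from the long exact sequence of local cohomology.
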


\begin{proof} Since $M_{\geq s}(s)$ is a module generated by 
elements of non-negative degrees, it is clear that
$i \leq \ged \Tor^A_i(\Bbbk, M_{\geq s}(s))$ when 
$\Tor^A_i(\Bbbk, M_{\geq s}(s))\neq 0$. It remains 
to show that if $s \geq \CMregxi(M)$, then
$\deg \Tor^A_i(\Bbbk, M_{\geq s}(s)) \leq c+ \epsilon+i \xi_1$
for all $i$, or equivalently that $\Torregxi(M_{\geq s}(s))
\leq c+\epsilon$.

Let $s\geq \CMregxi(M)$. It is clear that
$\deg M/M_{\geq s}\leq s-1$. By Example~\ref{xxex2.2}(1),
$$\CMregxi(M/M_{\geq s})=\deg M/M_{\geq s} \leq s-1.$$
Applying Lemma \ref{xxlem3.7}(2) to the short 
exact sequence
$$0\to M_{\geq s}\to M\to M/M_{\geq s}\to 0,$$
we obtain that $\CMregxi(M_{\geq s})\leq \max\{s, s-1+\xi_1\}
=s+\epsilon$. By Theorem \ref{xxthm3.3}, 
$$\Torregxi(M_{\geq s})
\leq \CMregxi(M_{\geq s})+\Torregxi(\Bbbk)\leq s+\epsilon+c.$$
By Lemma \ref{xxlem3.1}(2) we
obtain that $\Torregxi(M_{\geq s}(s))\leq c+\epsilon$ 
as desired.
\end{proof}

If $A$ is Koszul then $c = \Torregxi(\Bbbk) =0$, and if, 
in addition, $\xi_1 =1$ then Theorem \ref{xxthm3.13} says
that $M_{\geq s}(s)$ has a linear resolution,
which recovers \cite[Theorem 3.1]{Jo4}.

\section{Weighted Artin--Schelter Regularity}
\label{xxsec4}

Extending Definition \ref{xxdef0.2}(3) to any 
$\xi= (\xi_0, \xi_1)$ the weighted AS-regularity 
(or $\xi$-AS regularity) of $A$ is defined to be 
$$\ASregxi(A)=\Torregxi(\Bbbk)+\CMregxi(A).$$

In this section we prove results that are related to AS 
Gorenstein and AS regular algebras. We begin with a 
generalization of a nice result of Dong and Wu 
\cite[Theorem 4.10]{DW} that provides a proof of part of 
Theorem \ref{xxthm0.6}, showing that (1) and (3) are 
equivalent; parts (1) and (2) are equivalent by 
\cite[Theorem 0.8]{KWZ2}. Note that by Remark \ref{xxrem2.6}(1,3), 
the existence of $\xi$ such that $\ASregxi(A)=0$ in part (ii) 
of the theorem below does not imply that $\ASreg_{\zeta}(A)=0$ 
for all $\zeta$.

\begin{theorem}
\label{xxthm4.1}
Let $A$ be a noetherian connected graded algebra with balanced
dualizing complex. Then the following are equivalent:
\begin{enumerate}
\item[(i)]
$A$ is AS regular.
\item[(ii)]
$A$ is Cohen--Macaulay and 
there exists a $\xi=(\xi_0,\xi_1)$ with $\xi_0>0$ 
such that $\ASregxi(A)=0$. 
\end{enumerate}
\end{theorem}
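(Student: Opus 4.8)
The plan is to treat the two implications separately, with essentially all the work in (ii)$\Rightarrow$(i).

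For (i)$\Rightarrow$(ii): if $A$ is AS regular of global dimension $d$, then $A$ is AS Gorenstein, so $\Ext^i_A(\Bbbk,A)=0$ for $i\neq d$; since every $A/A_{\geq n}$ has a finite filtration by shifts of $\Bbbk$, this forces $H^i_\fm(A)=0$ for $i\neq d$, while $H^d_\fm(A)\neq 0$ because $\R\Gamma_\fm(A)\not\cong 0$, so $A$ is $d$-Cohen--Macaulay. Moreover, by \cite[Theorem 0.8]{KWZ2} there is a $\xi=(1,\xi_1)$ with $\xi_1\leq 1$ (in particular $\xi_0=1>0$) for which $\ASregxi(A)=0$. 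Thus (ii) holds. For (ii)$\Rightarrow$(i), I would first rescale via Lemma~\ref{xxlem3.1}(1) so that $\xi=(1,\xi_1)$. If $\xi_1\leq 1$ the conclusion is immediate from \cite[Theorem 0.8]{KWZ2}, and the Cohen--Macaulay hypothesis is not used; so the real content is the range $\xi_1>1$, where by Remark~\ref{xxrem2.6}(1) Cohen--Macaulayness is indispensable. Let $s$ be the integer with $H^i_\fm(A)=0$ for $i\neq s$ and $H^s_\fm(A)\neq 0$, so $\R\Gamma_\fm(A)\cong H^s_\fm(A)[-s]$ and the balanced dualizing complex is $R=\omega[s]$ with $\omega:=H^s_\fm(A)'$ a nonzero finitely generated graded $A$-bimodule (Theorem~\ref{xxthm1.4}(2)). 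Writing $e:=\deg H^s_\fm(A)\in\mathbb{Z}$, Example~\ref{xxex2.2}(4) gives $\CMregxi(A)=e+\xi_1 s$, so $\ASregxi(A)=0$ yields $\Torregxi(\Bbbk)=-(e+\xi_1 s)$, i.e.
\[
t^A_i(\Bbbk):=\deg\Tor^A_i(\Bbbk,\Bbbk)\leq \xi_1 i-(e+\xi_1 s)=\xi_1(i-s)-e\qquad\text{for all }i,
\]
with equality for at least one $i$; taking $i=0$ shows $e+\xi_1 s\leq 0$.

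The heart of the argument is to upgrade this linear estimate to $\gldim A<\infty$, equivalently $\pdim_A\Bbbk<\infty$. Here I would follow the strategy of Dong--Wu \cite[Theorem 4.10]{DW}, exploiting the dualizing-module structure: Local Duality (Theorem~\ref{xxthm1.4}(1)) applied to $\R\Gamma_\fm(\Bbbk)=\Bbbk$ together with $R=\omega[s]$ gives $\RHom_A(\Bbbk,\omega)\cong\Bbbk[-s]$, so $\Ext^i_A(\Bbbk,\omega)$ vanishes for $i\neq s$ and is one-dimensional and internally concentrated for $i=s$. Feeding a minimal free resolution of $\Bbbk$ into $\Hom_A(-,\omega)$ turns this vanishing into a Frobenius-type (Poincar\'e) pairing between $\Ext^\bullet_A(\Bbbk,\Bbbk)$ and $\Ext^\bullet_A(\Bbbk,\omega)$; combined with the estimate above and the trivial bound $t^A_i(\Bbbk)\geq i$, this pairing forces the graded degrees of the syzygies of $\Bbbk$ to stop growing, hence $\pdim_A\Bbbk<\infty$. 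I expect this to be the main obstacle: for $\xi_1>1$ the chain $i\leq t^A_i(\Bbbk)\leq \xi_1(i-s)-e$ is by itself consistent with $\gldim A=\infty$, so mere degree bookkeeping is insufficient and one must genuinely use the self-duality carried by the canonical module $\omega$.

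Once $\gldim A<\infty$, the rest is routine. The Auslander--Buchsbaum formula (Corollary~\ref{xxcor3.12}(1)) applied to $\Bbbk$, which has depth $0$, gives $\gldim A=\pdim_A\Bbbk=\depth A=s$, so $A$ is $s$-Cohen--Macaulay with $s=\gldim A$; finite global dimension forces finite Gelfand--Kirillov dimension, and, together with Cohen--Macaulayness and the balanced dualizing complex, forces $A$ to be AS Gorenstein (indeed $\omega$ then has finite projective dimension and $\omega\cong A(\bfl)$ for some $\bfl$, by a rank count on $\Ext^s_A(\Bbbk,\omega)=\Bbbk$). Hence $A$ is AS regular, which completes the proof.
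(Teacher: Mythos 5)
Your outline is right on the easy parts — rescaling to $\xi_0=1$, disposing of $\xi_1\leq 1$ via \cite[Theorem~0.8]{KWZ2}, using Cohen--Macaulayness to write the balanced dualizing complex as $\omega[s]$, and noting that the combination $i\leq t^A_i(\Bbbk)\leq \xi_1(i-s)-e$ alone does not rule out $\gldim A=\infty$ once $\xi_1>1$. You correctly identify the remaining task, and even flag it as the main obstacle. But the step you propose to get over it — a ``Frobenius-type (Poincar\'e) pairing between $\Ext^\bullet_A(\Bbbk,\Bbbk)$ and $\Ext^\bullet_A(\Bbbk,\omega)$'' — is not an argument, it is a gesture. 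No pairing is constructed, no degree bound is extracted from it, and nothing is said about why it ``forces the graded degrees of the syzygies of $\Bbbk$ to stop growing.'' As written, the core implication $\pdim_A\Bbbk<\infty$ is simply asserted.

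What the paper actually does at this point is quite different, and it is worth seeing why your sketch does not close. The paper proves the explicit monotonicity claim $t^A_{j+1}(\Bbbk)\leq t^A_j(\Bbbk)$ for all $j\geq d$, which together with the trivial bound $t^A_j(\Bbbk)\geq j$ (whenever $F_j\neq 0$) immediately forces the minimal resolution to terminate. The monotonicity comes from three observations chained together: (a) because $\Ext^i_A(\Bbbk,\omega)$ is concentrated in degree $d$, an induction on syzygies (not a pairing) shows that $Z_{j}(F)$ is $d$-Cohen--Macaulay for $j\geq d-1$; (b) the short exact sequence $0\to Z_j(F)\to F_j\to Z_{j-1}(F)\to 0$ then collapses the local-cohomology long exact sequence to a short exact sequence, giving $\CMregxi(Z_j(F))\leq\CMregxi(F_j)$; and (c) — this is the crucial ingredient your proposal never invokes — the hypothesis $\ASregxi(A)=0$ feeds into Corollary~\ref{xxcor3.6} to give the \emph{equality} $\Torregxi(X)=\CMregxi(X)+c$ for every $X\in\D^{\bb}_{\fg}(A\Gr)$, which converts the CM-regularity inequality in (b) into a Tor-regularity inequality and hence into $t^A_{j+1}(\Bbbk)\leq t^A_j(\Bbbk)$. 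Without step (c) you have no mechanism to translate information about $\omega$ back into a bound on $t^A_i(\Bbbk)$, which is precisely the difficulty you noticed. So the gap in your write-up is concrete: you need the global identity $\Torregxi=\CMregxi+c$ supplied by Theorems~\ref{xxthm3.3} and~\ref{xxthm3.5} (Corollary~\ref{xxcor3.6}), not a pairing on Ext. Two smaller points: your (i)$\Rightarrow$(ii) can simply cite Remark~\ref{xxrem2.6}(3) to produce $\xi=(1,1)$; and once $\pdim\Bbbk<\infty$, the paper goes via \cite[Theorem~0.3]{Zh} rather than a rank count on $\Ext^s_A(\Bbbk,\omega)$.
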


An AS Gorenstein algebra is called \emph{standard} if $\bfl = d$.
When $A$ is Koszul, then \cite[Theorem 4.10]{DW} can be 
recovered from Theorem \ref{xxthm4.1} since, by definition, 
standard AS Gorenstein algebras satisfy (ii) in the above 
theorem. Note that \cite[Theorem 3.2]{KWZ2} is a 
special case of Theorem \ref{xxthm4.1} by taking $\xi=
(1,1)$. Our proof of Theorem \ref{xxthm4.1} below 
is very close to the proof of \cite[Theorem 3.2]{KWZ2}. 

\begin{proof}[Proof of Theorem \ref{xxthm4.1}]
We first prove that (i) implies (ii). Suppose that $A$ is AS 
regular of type $(d,\bfl)$. It is well-known that $A$ is 
Cohen--Macaulay. If we let $\xi=(1,1)$, then 
$$\CMregxi(A)=d-\bfl=-\Torregxi(\Bbbk)$$
by Remark \ref{xxrem2.6}(3), and so part (ii) holds.

We now show that (ii) implies (i). Let $A$ be noetherian 
connected graded with balanced dualizing complex. If 
$\pdim \Bbbk<\infty$, then $A$ has finite global dimension. 
Since $A$ is noetherian, if it has finite global dimension, 
then it has finite GK dimension. By \cite[Theorem 0.3]{Zh},
$A$ is AS Gorenstein and so $A$ is AS regular by definition. 
Hence, it suffices to show that $\pdim \Bbbk<\infty$. 

Let
\begin{equation}
\label{E4.1.1}\tag{E4.1.1}
F: \quad \cdots \to F_{i}\to \cdots \to F_0\to 
{_A\Bbbk}\to 0
\end{equation}
be a minimal free resolution of the trivial left $A$-module
$_A\Bbbk$. Since $A$ is Cohen--Macaulay, by 
\cite[Theorem 6.3]{VdB}, the balanced dualizing complex over 
$A$ is 
$$R\cong \R\Gamma_{\fm}(A)'\cong \omega[d]$$
where $\omega$ is a dualizing $A$-bimodule and 
$d={\rm{lcd}}(A)$. By Theorem \ref{xxthm1.4}(1), for every 
complex $X$ of left graded $A$-modules,
\begin{equation}
\label{E4.1.2}\tag{E4.1.2}
\R\Gamma_{\fm}(X)'\cong \R\Hom_A(X, R)
\cong \R\Hom_A(X,\omega[d]).
\end{equation}
Since the dualizing complex has finite injective dimension, 
\eqref{E4.1.2}, taking $X$ to be an $A$-module $M$, implies that
$$d=\injdim (\omega)<\infty.$$
As a consequence of \eqref{E4.1.2}, $\Gamma_{\fm}$ has 
cohomological dimension $d$.

For each $j\geq 0$, let $Z_j(F)$ denote the $j$th syzygy of the 
complex $F$ \eqref{E4.1.1}. We will show that $Z_j(F)=0$ for 
$j\gg 0$, which implies that $\pdim \Bbbk<\infty$ as desired. 
Assume to the contrary that there is an increasing sequence
$j_1<j_2<\cdots$ such that $Z_{j_s}(F)\neq 0$ for all $s\geq 1$. 
Then $Z_j(F)\neq 0$ for all
$j\geq 0$. Note that
$$\cdots \to F_{j+2}\to F_{j+1}\to Z_j(F)\to 0$$
is a minimal free resolution of $Z_j(F)$.  

\bigskip

\noindent 
\textbf{Claim.} For all $j \geq d$, $t^A_{j+1}(\Bbbk) \leq 
t^A_{j}(\Bbbk)$.

\bigskip

\noindent
{\it Proof of the claim.}
By the balanced dualizing complex condition, 
$\Ext^i_A(\Bbbk, \omega)=0$ for all $i\neq d=\injdim \omega$. By 
induction on syzygies, we have $\Ext^i_A(Z_{d-1}(F),\omega)=0$ 
for all $i\neq 0$. Further, by induction, one sees that 
$\Ext^i_A(Z_{j-1}(F), \omega)=0$ for all $i\neq 0$ and all 
$j\geq d$. From now on, we fix $j\geq d$. By \eqref{E4.1.2}, we 
obtain that $H^i_{\fm}(Z_{j-1}(F))=0$ for all $i\neq d$. Since $A$
is Cohen--Macaulay, $H^i_{\fm}(F_j)=0$ for all $i\neq d$.
Applying $\R\Gamma_{\fm}(-)$ to the short exact sequence
$$0\to Z_{j}(F)\to F_j\to Z_{j-1}(F)\to 0,$$
we obtain a long exact sequence, which has only three 
nonzero terms yielding a short exact sequence
$$0\to H^d_{\fm}(Z_j(F))\to H^d_{\fm}(F_j)\to H^d_{\fm}(Z_{j-1}(F))
\to 0.$$
The above short exact sequence implies that 
$\deg H^d_{\fm}(Z_j(F))\leq \deg H^d_{\fm}(F_j)$.
By definition, 
\begin{equation}
\label{E4.1.3}\tag{E4.1.3}
\CMregxi(Z_j(F))\leq \CMregxi(F_j).
\end{equation}
By Corollary \ref{xxcor3.6}, for any $X\in \D^{\bb}_{\fg}(A\Gr)$
\begin{equation}
\label{E4.1.4}\tag{E4.1.4}
\Torregxi(X)=\CMregxi(X)+c,
\end{equation}
where $c = -\CMregxi(A)$. Then 
$$\begin{aligned}
t^A_{j+1}(\Bbbk)&= t^A_{0}(F_{j+1}) = t^A_{0}(Z_j(F))\\
&= \frac{1}{\xi_0} (\xi_0 t^A_{0}(Z_j(F))-\xi_1 0)\\
& \leq \frac{1}{\xi_0} \sup \{\xi_0 t^A_{i}(Z_j(F))-
\xi_1 i\mid i\in {\mathbb Z}\}\\
&=\frac{1}{\xi_0} \Torregxi(Z_j(F))\\
&=\frac{1}{\xi_0} (\CMregxi(Z_j(F))+c) \quad\;\quad 
{\text{by \eqref{E4.1.4}}}\\
&\leq \frac{1}{\xi_0} (\CMregxi(F_j)+c)\qquad \qquad
{\text{by \eqref{E4.1.3}}}\\
&=\frac{1}{\xi_0} \Torregxi(F_j)\\
&=\frac{1}{\xi_0} \sup \{\xi_0 t^A_{i}(F_j)-
\xi_1 i\mid i\in {\mathbb Z}\}\\
&=\frac{1}{\xi_0}(\xi_0 t^A_{0}(F_j)-
\xi_1 0)=t^A_{0}(F_j)\\
&=t^A_{j}(\Bbbk)
\end{aligned}
$$
as desired. This finishes the proof of the claim.

Since $A$ is connected graded and since $F$ is the minimal
free resolution of $_A \Bbbk$, we have
\begin{equation}
\label{E4.1.5}\tag{E4.1.5}
t^A_{j}(\Bbbk)\geq j
\end{equation} 
whenever $F_j\neq 0$. Then for 
$j\gg 0$, the claim contradicts \eqref{E4.1.5}.
Therefore we obtain a contradiction, 
and hence $\pdim \Bbbk<\infty$ as required. 
\end{proof}

\begin{remark}
\label{xxrem4.2}
Suppose \cite[Hypothesis 2.7]{KWZ1} holds. Then, by Example 
\ref{xxex2.2}(3), $\CMregxi(A)=\xi_0 \deg_t h_A(t) +\xi_1 d$. 
Then (ii) is equivalent to 
$$\Torregxi(\Bbbk)=-(\xi_0 \deg_t h_A(t) +\xi_1 d)$$ 
which could be easier to compute than computing these 
regularities from their definitions.
\end{remark}

\begin{proof}[Proof of Theorem \ref{xxthm0.6}]
(1) $\Leftrightarrow$ (3) This is Theorem \ref{xxthm4.1}.

(1) $\Rightarrow$ (2) See the first paragraph of the proof 
of Theorem \ref{xxthm4.1}.

(2) $\Rightarrow$ (1) By the second paragraph of the proof 
of Theorem \ref{xxthm4.1}, it suffices to show that
$\pdim \Bbbk<\infty$. If $\xi=1$, the assertion follows from 
\cite[Theorem 0.8]{KWZ2}. Note that the hypothesis 
$\ASregxi(A)=0$ implies that $\Torregxi(\Bbbk)<\infty$.
If $\xi<1$, it follows from \eqref{E2.4.1} that 
$\pdim \Bbbk<\infty$ as desired.
\end{proof}

\begin{proof}[Proof of Corollary \ref{xxcor0.7}]
By definition, the hypothesis implies that 
$\Torregxi(\Bbbk)\leq \bfl- \xi d=-\CMregxi(A)$.
Thus $\ASregxi(A)\leq 0$. By Theorem \ref{xxthm0.3}(3),
$\ASregxi(A)=0$. Now the assertion follows from 
Theorem \ref{xxthm0.6}(3$\Rightarrow$1).
\end{proof}

The following result is a weighted and noncommutative (and 
non-generated in degree 1) version of 
\cite[Theorem 1.3(i$\Leftrightarrow$ii$\Leftrightarrow$v)]{Rom}.

\begin{theorem}
\label{xxthm4.3}
Let $A$ be a noetherian connected graded algebra with balanced
dualizing complex. Then the following are equivalent:
\begin{enumerate}
\item[(1)]
$A$ is AS regular.
\item[(2)]
There exists a pair $\xi=(\xi_0,\xi_1)$ with $\xi_0>0$ 
and $\xi_1\leq \xi_0$ such that
$$\CMregxi(X)=\Torregxi(X)+\CMregxi(A)$$
for all $0\neq X\in \D^{\bb}_{\fg}(A\Gr)$.
\item[(3)]
There exists a pair $\xi=(\xi_0,\xi_1)$ with $\xi_0>0$ 
and $\xi_1\leq \xi_0$ such that
$$\CMregxi(M)=\Torregxi(M)+\CMregxi(A)$$
for all noetherian modules $0\neq M\in A\Gr$.
\item[(4)]
There exist $c$ and a pair $\xi=(\xi_0,\xi_1)$ with $\xi_0>0$ 
and $\xi_1\leq \xi_0$ such that
$$\CMregxi(X)=\Torregxi(X)-c$$
for all $0\neq X\in \D^{\bb}_{\fg}(A\Gr)$.
\item[(5)]
There exist $c$ and a pair $\xi=(\xi_0,\xi_1)$ with $\xi_0>0$ 
and $\xi_1\leq \xi_0$ such that
$$\CMregxi(M)=\Torregxi(M)-c$$
for all noetherian modules $0\neq M\in A\Gr$.
\end{enumerate}
\end{theorem}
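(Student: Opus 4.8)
The plan is to establish the cycle $(1)\Rightarrow(2)\Rightarrow(3)\Rightarrow(5)\Rightarrow(1)$ together with the two extra implications $(2)\Rightarrow(4)$ and $(4)\Rightarrow(5)$, so that all five statements become equivalent. Everything except $(5)\Rightarrow(1)$ is essentially a formal consequence of results already available.

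For $(1)\Rightarrow(2)$: if $A$ is AS regular then, by Definition~\ref{xxdef0.1}(d), $A$ has finite global dimension, so every nonzero object of $\D^{\bb}_{\fg}(A\Gr)$ has a bounded minimal free resolution and hence finite projective dimension. Choosing the pair $\xi=(1,1)$, which satisfies $\xi_0>0$ and $0\le\xi_1\le\xi_0$, Theorem~\ref{xxthm3.10}(1) yields $\CMregxi(X)=\Torregxi(X)+\CMregxi(A)$ for every such $X$, which is precisely~(2). The implication $(2)\Rightarrow(3)$ is immediate, since a noetherian graded module, viewed as a complex concentrated in degree $0$, lies in $\D^{\bb}_{\fg}(A\Gr)$. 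For $(2)\Rightarrow(4)$ and $(3)\Rightarrow(5)$ we set $c:=-\CMregxi(A)$, which is finite by the remarks following Definition~\ref{xxdef2.1}; and $(4)\Rightarrow(5)$ is again restriction to modules.

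The substance of the theorem is $(5)\Rightarrow(1)$. Given a pair $\xi=(\xi_0,\xi_1)$ with $\xi_0>0$, $\xi_1\le\xi_0$, and a constant $c$ as in~(5), I would first test the identity on $M=A$: since $\Torregxi(A)=0$ by Example~\ref{xxex2.4}(2), this forces $c=-\CMregxi(A)$. Testing next on $M=\Bbbk$: since $\CMregxi(\Bbbk)=\xi_0\deg(\Bbbk)=0$ by Example~\ref{xxex2.2}(1), the identity gives $0=\Torregxi(\Bbbk)-c=\Torregxi(\Bbbk)+\CMregxi(A)=\ASregxi(A)$. Now rescale by $\lambda=\xi_0^{-1}>0$: by Lemma~\ref{xxlem3.1}(1) each of $\Torregxi$ and $\CMregxi$, and hence their sum, scales by $\lambda$, so we may assume $\xi_0=1$, in which case $\xi_1\le\xi_0$ becomes $\xi_1\le1$. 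Thus $A$ admits a weight $\xi\le1$ with $\ASregxi(A)=0$, and Theorem~\ref{xxthm0.6}, implication $(2)\Rightarrow(1)$, gives that $A$ is AS regular.

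The only real difficulty is bookkeeping: one must check that the constraint $\xi_1\le\xi_0$ imposed throughout $(2)$--$(5)$ is exactly what is needed to meet the hypothesis $\xi\le1$ of Theorem~\ref{xxthm0.6}(2) after rescaling to $\xi_0=1$, and that $(1)\Rightarrow(2)$ invokes Theorem~\ref{xxthm3.10} non-circularly (it does, as that theorem is proved in Section~\ref{xxsec3} without reference to Theorem~\ref{xxthm4.3}). It is also worth recording, in $(1)\Rightarrow(2)$, the standard fact that a bounded complex with finitely generated cohomology over a connected graded algebra of finite global dimension has a bounded minimal free resolution, so that Theorem~\ref{xxthm3.10}(1) genuinely applies to every object of $\D^{\bb}_{\fg}(A\Gr)$.
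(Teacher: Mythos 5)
Your proof is correct and follows essentially the same skeleton as the paper's: establish $(1)\Rightarrow(2)$ directly, observe that all the implications down to $(5)$ are formal, and for $(5)\Rightarrow(1)$ specialize the identity to $M=A$ and $M=\Bbbk$ to force $\ASregxi(A)=0$, then invoke Theorem~\ref{xxthm0.6}. The only substantive difference is in $(1)\Rightarrow(2)$: you deduce it from Theorem~\ref{xxthm3.10}(1) together with the observation that AS regularity forces every object of $\D^{\bb}_{\fg}(A\Gr)$ to have finite projective dimension, whereas the paper deduces it from Theorem~\ref{xxthm4.1} and Corollary~\ref{xxcor3.6}, i.e., first establishing $\ASregxi(A)=0$ at $\xi=(1,1)$ and then applying the general consequence of that vanishing. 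Both routes are valid and use the same underlying facts (both ultimately rest on Theorems~\ref{xxthm3.3} and~\ref{xxthm3.5}); your route has the minor advantage of not needing Corollary~\ref{xxcor3.6} explicitly but requires the side remark about finite projective dimension. Your explicit mention of the rescaling via Lemma~\ref{xxlem3.1}(1) before applying Theorem~\ref{xxthm0.6} is a useful bookkeeping point that the paper leaves implicit; it is exactly the reason the constraint $\xi_1 \leq \xi_0$ appears in the statement.
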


\begin{proof}
(1) $\Rightarrow$ (2)
This follows from Theorem \ref{xxthm4.1} and 
Corollary \ref{xxcor3.6}.

(2) $\Rightarrow$ (3), (2) $\Rightarrow$ (4),
(2) $\Rightarrow$ (5), (3) $\Rightarrow$ (5),
(4) $\Rightarrow$ (5) Trivial.

(5) $\Rightarrow$ (1) 
Let $M=A$, then $\CMregxi(A)=\Torregxi(A)-c=-c$.
Therefore $c=-\CMregxi(A)$. Let $M=\Bbbk$. Then 
$0=\CMregxi(\Bbbk)=\Torregxi(\Bbbk)-c$ 
which implies that $c=\Torregxi(\Bbbk)$. Therefore
$\ASregxi(A)=\CMregxi(A)+\Torregxi(\Bbbk)=-c+c=0$.
Now the assertion follows from Theorem \ref{xxthm0.6}.
\end{proof}

\begin{remark}
\label{xxrem4.4}
By Remark \ref{xxrem2.6}(1), if $A$ is AS regular, 
$\Torregxi(\Bbbk)$ may not be zero. So 
\cite[Theorem 1.3(i$\Leftrightarrow$iv)]{Rom}
cannot be generalized to the noncommutative case.
\end{remark}

The next two theorems are weighted versions of 
\cite[Theorem 5.1]{Jo3} and \cite[Corollary 5.2]{Jo3}. In 
the following theorems and their proofs, we use both
$$0\to F^{-s}\to \cdots \to F^0\to 0$$ 
and 
$$0\to F_{s}\to \cdots \to F_0\to 0$$ 
to denote the same complex $F$ after identifying $F^{-i}$ 
with $F_{i}$. Similarly, we use both $H^{-i}(F)$ and 
$H_{i}(F)$ for the same (co)homology. 

Suppose that $F:=0\to F^{-s}\to \cdots \to F^0\to 0$ is a 
minimal complex of finitely generated free $A$-modules bounded 
left at position zero. Application of the functor 
$(-)^\vee:=\Hom_A(-,A)$ on $F$ yields the minimal free complex 
$F^\vee$ that is bounded above at position zero. 

\begin{theorem}
\label{xxthm4.5}
Retain the above notation. Let $\xi \leq 1 $ be a real number. 
Then, for all integers $0\leq c\leq s$, we have
$$\max_{0 \leq j \leq c}
\left\{ -\ged (H^{s-j}(F^\vee))+ \xi j\right\}
=\max_{0 \leq j \leq c}
\left\{\deg (H_{s-j}(\Bbbk\otimes_A F))+\xi j \right\}.$$
\end{theorem}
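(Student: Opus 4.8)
The plan is to reduce the identity to a purely combinatorial statement about the generator degrees of $F$ and then run a short extremal argument in which the hypothesis $\xi\le 1$ supplies exactly the needed slack. Write $F^{-i}=\bigoplus_{\ell}A(-\sigma_{i,\ell})$ with $\sigma_{i,\ell}\in\mathbb{Z}$ and set $\sigma_i:=\max_\ell\sigma_{i,\ell}$ (with $\sigma_i=-\infty$ if $F^{-i}=0$). Since $F$ is minimal, $\Bbbk\otimes_A F$ has zero differential, so $H_{s-j}(\Bbbk\otimes_A F)=\Bbbk\otimes_A F^{-(s-j)}$ and $\deg H_{s-j}(\Bbbk\otimes_A F)=\sigma_{s-j}$. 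Dually, $(F^\vee)^i=\Hom_A(F^{-i},A)=\bigoplus_\ell A(\sigma_{i,\ell})$ is a minimal free complex whose term at level $i$ is concentrated in internal degrees $\ge-\sigma_i$ and has exactly $m_i:=\#\{\ell:\sigma_{i,\ell}=\sigma_i\}$ basis vectors in the bottom degree $-\sigma_i$. Setting $g_i:=-\ged H^i(F^\vee)$, the asserted identity becomes
$$\max_{0\le j\le c}\{\,g_{s-j}+\xi j\,\}\;=\;\max_{0\le j\le c}\{\,\sigma_{s-j}+\xi j\,\},$$
so it suffices to compare $g_i$ with $\sigma_i$.

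One inequality is free: $H^i(F^\vee)$ is a subquotient of $(F^\vee)^i$, hence concentrated in internal degrees $\ge-\sigma_i$, so $g_i\le\sigma_i$ for every $i$, which gives ``$\le$''. For the reverse inequality I would establish the following dichotomy for every $i$ with $F^{-i}\ne 0$. By minimality the differential $(F^\vee)^{i-1}\to(F^\vee)^i$ vanishes in internal degree $-\sigma_i$ (its matrix entries relevant to that degree are homogeneous of degree $\le 0$, hence vanish by minimality or trivially), so $H^i(F^\vee)_{-\sigma_i}=(\ker d^i)_{-\sigma_i}$. Therefore either \textbf{(a)} $H^i(F^\vee)_{-\sigma_i}\ne 0$, in which case $\ged H^i(F^\vee)=-\sigma_i$ and $g_i=\sigma_i$; or \textbf{(b)} $d^i$ is injective on $((F^\vee)^i)_{-\sigma_i}\cong\Bbbk^{m_i}$ with $m_i\ge 1$, in which case, again by minimality, the (nonzero) image of this restriction is contained in $\bigoplus_{\sigma_{i+1,\ell''}>\sigma_i}A(\sigma_{i+1,\ell''})$, forcing $F^{-(i+1)}\ne 0$ and $\sigma_{i+1}\ge\sigma_i+1$.

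To finish, discard the trivial case where $\sigma_{s-j}=-\infty$ for all $j\in[0,c]$ (both sides $-\infty$); otherwise let $j_0$ be the \emph{smallest} index in $[0,c]$ at which $\sigma_{s-j}+\xi j$ attains its maximum, and put $i_0=s-j_0$, so $F^{-i_0}\ne 0$. If case (b) held at $i_0$, then $j_0\ge 1$ (taking $i_0=s$ would force $F^{-(s+1)}\ne 0$), hence $j_0-1\in[0,c]$, and using $\sigma_{i_0+1}\ge\sigma_{i_0}+1$ together with $\xi\le 1$,
$$\sigma_{s-(j_0-1)}+\xi(j_0-1)=\sigma_{i_0+1}+\xi j_0-\xi\;\ge\;(\sigma_{i_0}+\xi j_0)+(1-\xi)\;\ge\;\sigma_{s-j_0}+\xi j_0,$$
contradicting the minimality of $j_0$. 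Hence case (a) holds at $i_0$, giving $g_{s-j_0}+\xi j_0=\sigma_{s-j_0}+\xi j_0$, which is the maximum; so ``$\ge$'' follows as well.

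I expect the crux to be the dichotomy, and within it the implication in case (b) that $\sigma_{i+1}\ge\sigma_i+1$: this is where minimality of $F^\vee$ must be invoked carefully in the single critical internal degree $-\sigma_i$ (once to kill the incoming differential, once to constrain where the outgoing differential can land), and it is precisely the gap $1-\xi\ge 0$ produced there that makes the extremal step close. This is also the reason one should expect the identity to fail once $\xi>1$.
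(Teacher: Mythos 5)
Your proof is correct, and it reorganizes the paper's argument in a genuinely different way. The paper proceeds by induction on $c$, splitting the inductive step into the cases $\beta\leq\alpha-1$ and $\beta\geq\alpha$ (in the paper's notation $\alpha=\sigma_{s-c}$, $\beta=\sigma_{s-c-1}$); the substance of the paper's Case~2 is exactly the contrapositive of your dichotomy, namely that $\sigma_{i+1}\leq\sigma_i$ forces $g_i=\sigma_i$, established by the same double use of minimality of $F^\vee$ (once to kill the incoming differential in internal degree $-\sigma_i$, once to constrain where the outgoing one lands). What you do differently is (i) reduce the statement to a combinatorial identity in the sequences $(g_i)$ and $(\sigma_i)$, (ii) obtain the inequality $\leq$ for free from the pointwise bound $g_i\leq\sigma_i$, and (iii) replace the induction by a direct extremal argument at the least maximizer $j_0$, where the $1-\xi\geq 0$ slack you exploit there is precisely the estimate $\beta+\xi(c+1)\leq\alpha+\xi c$ appearing in the paper's Case~1. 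Your version is arguably cleaner: it makes transparent that the theorem is a statement about the generator-degree sequence $(\sigma_i)$, isolates the single place $\xi\leq 1$ is used, and avoids threading an induction hypothesis through both sides of the equality; the paper's induction, by contrast, produces the identity for all prefix lengths $c$ at once in the form it is quoted in the proof of Theorem~\ref{xxthm4.6}.
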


\begin{proof}
We proceed by induction on $c\geq 0$. 

For the initial step, let $c=0$. If $F^{-s}=0$, then both sides 
of the equation are $-\infty$, so we may assume that 
$F^{-s}\neq 0$. The complex $F^\vee$ is minimal, and the top 
term in this complex is $(F^\vee)^s=(F^{-s})^\vee=(F_s)^\vee$. 
Therefore
$$-\ged H^s(F^\vee)=-\ged (F^\vee)^s=-\ged (F_s^\vee),$$
which is equal to the maximal degree of the generators of $F_s$. 
But by the minimality of the complex this is again equal to 
$\deg H_s(\Bbbk\otimes_A F)$, so
$$-\ged H^s(F^\vee) +\xi 0= \deg H_s(\Bbbk\otimes_A F) +\xi 0,$$
and the equation holds for $c=0$. 

For the inductive step, we employ the following notation: for 
an integer $e \geq 0$, let
$$\begin{aligned}
x_{e}&=\max_{0 \leq j \leq e}
\left\{ -\ged (H^{s-j}(F^\vee))+ \xi j \right\}\\
y_{e}&=\max_{0 \leq j \leq e}\left\{\deg (H_{s-j}
(\Bbbk\otimes_A F))+\xi j \right\},
\end{aligned}
$$
and suppose that $x_e=y_e$ for every $e\leq c$. Set
$$\begin{aligned}
\alpha&=
{\text{maximal degree of a generator in $F_{s-c}=F^{-s+c}$}},\\
\beta&=
{\text{maximal degree of a generator in $F_{s-c-1}=F^{-s+c+1}$}}.
\end{aligned}
$$
There are two cases to consider. 

\noindent\textbf{Case 1.} 
Suppose $\beta\leq \alpha-1$. In this case, using the fact that 
$\xi\leq 1$, we obtain that 
$$y_{c+1}=\max\{y_{c}, \beta+ \xi(c+1)\}
\leq \max\{y_{c}, \beta+ 1+\xi c\}
\leq \max\{y_{c}, \alpha+\xi c\}=y_{c}.$$
By the induction hypothesis and the definition of $y_c$, we have 
$x_{c}=y_{c}\geq \alpha +\xi c$. Since 
$\ged H^{s-c-1}(F^\vee)\geq \ged (F^\vee)^{s-c-1}$, we obtain
that
$$\begin{aligned}
- \ged H^{s-c-1}(F^\vee)+\xi (c+1)
&\leq -\ged (F^\vee)^{s-c-1} + \xi(c+1)\\
&= \beta +\xi(c+1) \leq \alpha+ \xi c\leq x_{c},
\end{aligned}
$$
whence $x_{c+1}=x_c$. Therefore, $x_{c+1} = x_c = y_c = y_{c+1}$. 

\noindent\textbf{Case 2:} 
Suppose $\beta\geq \alpha$. In this case we have 
$y_{c+1}=\max\{ y_{c}, \beta+\xi(c+1)\}$ as before. On the 
other hand, $F^\vee$ is minimal, so since $-\beta$ is the 
minimal degree of the generators of $(F^\vee)^{s-c-1}$ and 
$-\alpha$ is the minimal degree of the generators of 
$(F^\vee)^{s-c}$, the inequality $-\beta\leq -\alpha$ implies 
that any element in the minimal degree of $(F^\vee)^{s-c-1}$ 
is mapped to zero in $(F^\vee)^{s-c}$ (since it has to have 
image inside $\fm (F^\vee)^{s-c}$, and this module begins in 
degree $-\alpha+1$). By minimality of the complex, elements 
in the minimal degree of $(F^\vee)^{s-c-1}$ are not in the 
image of the differential, hence $\ged H^{s-c-1}(F^\vee)=-\beta$. 
Now we obtain that
$$x_{c+1}=\max\{x_{c}, -\ged H^{s-c-1}(F^\vee)+
\xi(c+1)\}=\max\{x_{c},\beta+\xi(c+1)\},$$
which implies that $x_{c+1}=y_{c+1}$. By induction, we have 
proved the claim.
\end{proof}

The following theorem can be used to compute the weighted 
CM regularity of a finitely generated nonzero graded module 
over an AS Gorenstein algebra.

\begin{theorem}[Theorem  \ref{xxthm0.8}]
\label{xxthm4.6} 
Suppose $A$ is a noetherian, connected graded AS Gorenstein
algebra of type $(d,\bfl)$. Let $\xi \leq 1$ be a real number 
{\rm{(}}and also by abuse the notation let $\xi=(1,\xi)${\rm{)}}. 
Let $M\neq 0$ be a finitely generated left graded $A$-module 
with finite projective dimension.
\begin{enumerate}
\item[(1)]
Let $w$ be an integer with $0 \leq w \leq d$. Then
\[
\max_{0 \leq j \leq w}\left\{\deg H^j_{\fm}(M)  +\xi j \right\} 
=-\bfl+\xi d+\max_{d-w \leq j \leq d}
\left\{ \deg \Tor^A_{j} (\Bbbk, M) -\xi j \right\}.
\]
\item[(2)]
In particular, if $0 \leq w \leq d$ is the maximum integer such 
that $H^w_{\fm}(M)\neq 0$, we have
$$\CMregxi(M)=-\bfl+\xi d+\max_{d-w \leq j \leq d}
\left\{\deg \Tor^A_j(\Bbbk,M)-\xi j \right\}.$$
\item[(3)]
If, further, $M$ is $s$-Cohen--Macaulay, then 
$$\CMregxi(M)=-\bfl+\xi s+\deg(\Tor^A_{d-s}(\Bbbk, M)).$$
\end{enumerate}
\end{theorem}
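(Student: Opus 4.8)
The plan is to deduce all three parts from Theorem~\ref{xxthm4.5}, applied to a minimal free resolution of $M$, after translating between the local cohomology of $M$ and the modules $\Ext^i_A(M,A)$ via local duality. The first ingredient is the AS Gorenstein dictionary: under Hypothesis~\ref{xxhyp1.3}, an AS Gorenstein algebra $A$ of type $(d,\bfl)$ has balanced dualizing complex $R=\R\Gamma_\fm(A)'$ isomorphic, as a complex of graded left $A$-modules (up to a degree-preserving Nakayama twist, which is irrelevant for all the degrees below), to $A(-\bfl)[d]$; in particular $A$ is $d$-Cohen--Macaulay and $\depth A=d$. Feeding this into Theorem~\ref{xxthm1.4}(1) gives, for our finitely generated $M$,
$$\R\Gamma_\fm(M)'\;\cong\;\RHom_A(M,R)\;\cong\;\RHom_A(M,A)(-\bfl)[d],$$
and passing to cohomology and graded vector-space duals yields $(H^j_\fm(M))'\cong\Ext^{d-j}_A(M,A)(-\bfl)$, hence $\deg H^j_\fm(M)=-\bfl-\ged\Ext^{d-j}_A(M,A)$ for every $j$. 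I would also record, from the Auslander--Buchsbaum formula (Corollary~\ref{xxcor3.12}(1)) together with $\depth A=d$, that $p:=\pdim M$ satisfies $p=d-\depth M\le d$, so that $\Tor^A_i(\Bbbk,M)=0$ unless $0\le i\le p$, while $H^j_\fm(M)=0$ for $j<d-p=\depth M$.

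For part~(1) I would take $F\colon 0\to F^{-p}\to\cdots\to F^0\to 0$ to be the minimal free resolution of $M$, so that $H_i(\Bbbk\otimes_A F)=\Tor^A_i(\Bbbk,M)$ and, with $(-)^\vee=\Hom_A(-,A)$, $H^i(F^\vee)=\Ext^i_A(M,A)$. Applying Theorem~\ref{xxthm4.5} with $s=p$ and $c=w-(d-p)$ (a legitimate choice precisely when $d-p\le w\le d$), I would rewrite its left side via $-\ged\Ext^{p-j}_A(M,A)=\bfl+\deg H^{d-p+j}_\fm(M)$ and its right side via $H_{p-j}(\Bbbk\otimes_A F)=\Tor^A_{p-j}(\Bbbk,M)$; after a routine reindexing of the summation variable this converts the identity of Theorem~\ref{xxthm4.5} into
$$\max_{d-p\le j\le w}\{\deg H^j_\fm(M)+\xi j\}=-\bfl+\xi d+\max_{d-w\le i\le p}\{\deg\Tor^A_i(\Bbbk,M)-\xi i\}.$$
The vanishing ranges recorded above then let me extend the left-hand maximum down to $j=0$ and the right-hand one up to $i=d$ without changing either value, which is the claim. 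The excluded case $0\le w<d-p$ is handled directly: both sides equal $-\infty$, since then $H^j_\fm(M)=0$ for all $j\le w$ and $\Tor^A_i(\Bbbk,M)=0$ for all $i\ge d-w>p$.

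Part~(2) is then immediate: if $w$ is the largest integer with $H^w_\fm(M)\ne 0$, then $w\le\cd(\Gamma_\fm)=d$ and $\CMregxi(M)=\sup_j\{\deg H^j_\fm(M)+\xi j\}=\max_{0\le j\le w}\{\deg H^j_\fm(M)+\xi j\}$, to which part~(1) applies verbatim. For part~(3), $s$-Cohen--Macaulayness gives $\depth M=\lcd M=s$, hence $p=\pdim M=d-s$ by Auslander--Buchsbaum; applying (2) with $w=s$ and using $\Tor^A_i(\Bbbk,M)=0$ for $i>d-s$, the maximum over $d-s\le i\le d$ collapses to its single term at $i=d-s$, giving $\CMregxi(M)=-\bfl+\xi d+\deg\Tor^A_{d-s}(\Bbbk,M)-\xi(d-s)=-\bfl+\xi s+\deg\Tor^A_{d-s}(\Bbbk,M)$.

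The points that actually need care, and that I would write out in full, are: (a) the identification of the balanced dualizing complex of an AS Gorenstein algebra as a shift of $A$, which is exactly what converts local-cohomology data into $\Ext$/$\Tor$ data; and (b) the bookkeeping of the index shift $j\leftrightarrow d-p+j$ and the endpoints of the various partial maxima, so that the ``truncated'' identity supplied by Theorem~\ref{xxthm4.5} lines up with the truncation appearing in the statement. The hypothesis $\xi\le 1$ enters only through Theorem~\ref{xxthm4.5}; everything else is formal.
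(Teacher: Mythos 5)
Your proposal is correct and takes essentially the same route as the paper: translate $H^j_\fm(M)$ into $\Ext^{d-j}_A(M,A)'(\bfl)$ via Van den Bergh local duality with the AS Gorenstein dualizing complex $A(-\bfl)[d]$, reindex, and apply Theorem~\ref{xxthm4.5} to the minimal free resolution of $M$, with parts (2) and (3) obtained as specializations. Your explicit handling of the degenerate range $0\le w<d-p$ (where both sides are $-\infty$) is a slight refinement of a point the paper's reindexing treats only implicitly.
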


\begin{proof}
(1) Let $p$ be the projective dimension of $M$.
By the Auslander--Buchsbaum Formula, $p \leq \depth(A) = d$.
Suppose $0\to F_{p}\to \cdots \to F_0\to M\to 0$ is a minimal 
free resolution of $M$ and write
$$ F:= \qquad \quad \cdots \to 0\to F_{p}\to \cdots 
\to F_0\to 0 \cdot $$
(where we have removed the $M$ term). By Theorem \ref{xxthm1.4}(1) 
and the hypothesis that $A$ is AS Gorenstein of type $(d, \bfl)$, 
for all integers $i$, we have 
$$H^i_{\fm}(M)\cong \Ext^{d-i}_A(M,A)'(\bfl).$$
Note that $\Ext^j_A(M,A)$ can be computed by using the 
complex $F^{\vee}:=\Hom_A(F,A)$. Then we have
$$\begin{aligned}
\max_{0 \leq j \leq w}&\{\deg(H^j_{\fm}(M))+\xi j\} 
= \max_{0 \leq j \leq w} \left\{ \deg \left(
\Ext_A^{d-j}(M,A)'\right) - \bfl + \xi j \right\}\\
&=\max_{0 \leq j \leq w}\left\{-\ged(H^{d-j}(F^{\vee}))
 -\bfl+\xi j \right\}\\
&=-\bfl+\xi d +\max_{0 \leq j \leq w}
 \left\{-\ged(H^{d-j}(F^{\vee}))-\xi d+\xi j \right\}\\
&= -\bfl+\xi d +\max_{d-w \leq k \leq d}
 \left\{-\ged(H^{k}(F^{\vee}))-\xi k  \right\} \\
&= -\bfl+\xi d +\max_{d-w \leq k \leq p}
 \left\{-\ged(H^{k}(F^{\vee}))-\xi k  \right\}  
 \quad \text{since $\pdim M = p$}\\
&=-\bfl+\xi d +\max_{0 \leq i \leq p - d + w}
\left\{-\ged(H^{p-i}(F^{\vee}))-\xi p+\xi i  \right\}\\
&=-\bfl+\xi d -\xi p+\max_{0 \leq i \leq p - d + w}
\left\{-\ged(H^{p-i}(F^{\vee})) +\xi i  \right\}.
\end{aligned}
$$
It is clear that
$$\begin{aligned}
\max_{d-w \leq j \leq d}&\left\{\deg 
(\Tor^A_j(\Bbbk,M)) -\xi j \right\}
=\max_{p-d \leq i \leq p - d + w}
\left\{\deg (\Tor^A_{p-i}(\Bbbk,M))  -\xi p+\xi i\right \} \\
&=\max_{ 0 \leq i \leq p - d + w}\{\deg (\Tor^A_{p-i}(\Bbbk,M)) 
-\xi p+\xi i\}\\
&=-\xi p +\max_{ 0 \leq i \leq p - d + w}
\{\deg (\Tor^A_{p-i}(\Bbbk,M))  +\xi i\}.
\end{aligned}
$$
Now by Theorem \ref{xxthm4.5}, we have
\[
\max_{0\leq i\leq p-d+w}
  \left\{-\ged(H^{p-i}(F^{\vee}))+\xi i \right\}
=\max_{0 \leq i\leq p-d+w}\left\{\deg (\Tor^A_{p-i}(\Bbbk,M)) 
+\xi i\right\}.
\]
Therefore the assertion follows.

Part (2) is a special case of part (1).

For part (3), if $M$ is $s$-Cohen--Macaulay, then $s=\depth(M)$, 
and by the Auslander--Buchsbaum Formula, 
$\pdim(M)=\depth(A)-\depth(M)=d-s$ is the projective dimension 
of $M$. Taking $w=s=d-p$, it follows from part (2) that
$$\begin{aligned}
\CMregxi(M)&= -\bfl+\xi d +
\max_{d-s\leq j\leq d}\{\deg \Tor^A_{j}(\Bbbk,M) -\xi j\}\\
&= -\bfl+\xi d +\max_{d-s\leq j\leq d-s}
\{\deg \Tor^A_{j}(\Bbbk,M) -\xi j\} \\
&\qquad\qquad\qquad \qquad\qquad \quad 
\text{ since $\pdim(M) = d -s$}\\
&=-\bfl+\xi d +\deg \Tor^A_{d-s}(\Bbbk,M) -\xi (d-s)\\
&=-\bfl+\xi s +\deg \Tor^A_{d-s}(\Bbbk,M)
\end{aligned}
$$
as desired.
\end{proof}

\begin{remark}
\label{xxrem4.7}
Retain the hypothesis as Theorem \ref{xxthm4.6}(3). 
Let $p$ be the projective dimension of $M$. 
\begin{enumerate}
\item[(1)]
Recall that $t^j(M)=\deg \Tor^A_j(\Bbbk,M)$ for all $i$.
Note that $\CMregxi(M) = \deg H^s_{\fm}(M) + \xi s$. 
It follows from Theorem 4.6(3) that 
$$t^p(M)=\deg H_{\fm}^s(M)+\bfl$$
which is a special case of Corollary \ref{xxcor3.12}(2).
\item[(2)]
By Theorem \ref{xxthm4.6}(3, 2) (taking $w=d$), 
we obtain that 
$$\begin{aligned}
t^p(M)-\xi p&=\CMregxi(M)+\bfl -\xi s-\xi p\\
&=\max_{0 \leq j \leq d}\left\{ t^j(M) -\xi j \right\}
\end{aligned}
$$
for all $\xi\leq 1$. Therefore, for each $j$,
$t^j(M) -\xi j\leq t^p(M)-\xi p$ for all $\xi\leq 1$. 
By taking $\xi=1$, we have $t^p(M)-t^j(M)\geq p-j$ 
for all $0\leq j\leq p$.
\end{enumerate}
\end{remark}

\section{Related invariants}
\label{xxsec5}
In this section we consider concavity, rate, and slope, 
homological invariants that are related to our weighted 
regularities and to homological invariants that have been 
studied in the literature.

\subsection{Concavity}
\label{xxsec5.1}
In this subsection we use the letters $A$ or $B$ for connected 
graded noetherian algebras, $S$ or $T$ for connected graded 
noetherian AS regular algebras, and $G$ for a general locally 
finite graded noetherian algebra.

In \cite[Definition 0.9]{KWZ2}, we introduced the notion of the
\emph{concavity} of a numerical invariant $\mathcal{P}$. We 
recall the definition here. A graded algebra map $\phi: A \to G$ 
is called \emph{finite} if $_AG$ and $G_A$ are finitely generated.
For a locally finite graded noetherian algebra $G$, let 
$$\Phi(G)=\{T \mid {\text{$T$ is AS regular and there is a finite 
map $\phi: T\to G$}}\}.$$

\begin{definition}{\cite[Definition 0.9]{KWZ2}}
\label{xxdef5.1}
Let $G$ be a locally finite graded noetherian algebra. 
Let ${\mathcal P}$ be any numerical invariant that is defined 
for locally finite ${\mathbb N}$-graded noetherian rings 
{\rm{(}}or connected graded noetherian AS regular algebras{\rm{)}}.
The {\it ${\mathcal P}$-concavity} of $G$ is defined to be
\[
c_{\mathcal P}(G):=\inf_{T \in \Phi(G)}\{ {\mathcal P} (T) \}.
\]
If no such $T$ exists, we define $c_{\mathcal P}(G)=\infty$.

When $\mathcal{P} = -\CMreg$ we call 
\[c(G) := c_{-\CMreg}(G)\]
simply the \emph{concavity} of $G$.
\end{definition}

In this subsection we introduce the weighted version of concavity, 
that is, when $\mathcal{P} = -\CMregxi$ in the above definition.

\begin{definition} 
\label{xxdef5.2}
Let $G$ be a locally finite graded noetherian algebra and let 
$\xi \in \mathbb{R}$. The {\it $\xi$-concavity} of $G$ is defined 
to be
\[
c_{\xi}(G) := c_{-\CMregxi}(G).
\]
\end{definition}

Now we prove the following weighted analogues of 
\cite[Theorem 0.10(1) and Proposition 4.1(4)]{KWZ2}, which follow 
from weighted versions of the proofs in \cite{KWZ2}.

\begin{proposition}
\label{xxpro5.3}
\begin{enumerate}
\item[(1)] 
Let $t$ be a commutative indeterminate and assume that 
$\xi\geq \deg t$. Then $c_{\xi}(G[t])=c_{\xi}(G)$.
\item[(2)] 
Let $T$ be a noetherian AS regular algebra. Suppose 
$0\leq \xi\leq 1$. Then 
$$c_{\xi}(T)=-\CMregxi(T).$$
\end{enumerate}
\end{proposition}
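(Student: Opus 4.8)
The plan is to prove the two equalities by comparing the infima that define the concavities, realizing each inequality by an explicit finite graded map, and using Example~\ref{xxex2.2}(2) for the behaviour of $\CMregxi$ under a central polynomial extension together with Theorem~\ref{xxthm3.10}(1) for a finite extension of AS regular algebras; this is the weighted analogue of the proofs of \cite[Theorem 0.10(1) and Proposition 4.1(4)]{KWZ2}. Throughout I identify $\xi\in\mathbb R$ with the pair $(1,\xi)$ as in Section~\ref{xxsec4}, so that $\xi\ge\deg t$ reads $\xi_1\ge\xi_0\deg t$ and $0\le\xi\le1$ reads $0\le\xi_1\le\xi_0$ with $\xi_0>0$ --- precisely the range covered by Theorem~\ref{xxthm3.10}(1).

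For part~(1) I would establish two inequalities. First, $c_\xi(G)\le c_\xi(G[t])$: any $T\in\Phi(G[t])$ carries a finite graded map $T\to G[t]$, and composing it with the finite quotient map $G[t]\to G[t]/(t)\cong G$ (finite since $G$ is cyclic as a $G[t]$-module) gives a finite graded map $T\to G$, so $T\in\Phi(G)$ and $c_\xi(G)\le-\CMregxi(T)$; taking the infimum over $T\in\Phi(G[t])$ gives the inequality, and this direction uses nothing about $\xi$. For the reverse, take $S\in\Phi(G)$ with a finite graded map $S\to G$. The central polynomial extension $S[t]$ is again noetherian AS regular --- adjoining a central indeterminate of degree $\deg t$ preserves all the conditions of Definition~\ref{xxdef0.1} and raises the type from $(d,\bfl)$ to $(d+1,\bfl+\deg t)$ --- and $t\mapsto t$ extends $S\to G$ to a finite graded map $S[t]\to G[t]$, so $S[t]\in\Phi(G[t])$. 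By Example~\ref{xxex2.2}(2), $\CMregxi(S[t])=\xi(d+1)-(\bfl+\deg t)=\CMregxi(S)+\xi-\deg t\ge\CMregxi(S)$, precisely because $\xi\ge\deg t$; hence $c_\xi(G[t])\le-\CMregxi(S[t])\le-\CMregxi(S)$, and the infimum over $S\in\Phi(G)$ gives $c_\xi(G[t])\le c_\xi(G)$. (The cases where $\Phi(G)$ or $\Phi(G[t])$ is empty are subsumed, since the displayed inequalities remain valid with the value $+\infty$.)

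For part~(2) the inequality $c_\xi(T)\le-\CMregxi(T)$ is immediate, since $\id\colon T\to T$ is a finite graded map and $T$ is AS regular, so $T\in\Phi(T)$. For the reverse it suffices to show $\CMregxi(S)\le\CMregxi(T)$ for every $S\in\Phi(T)$. Given a finite graded map $S\to T$ with $S$ AS regular, $T$ is a nonzero object of $\D^{\bb}_{\fg}(S\Gr)$; it has finite projective dimension over $S$ because $S$ has finite global dimension, and $S$, being AS regular, is noetherian with a balanced dualizing complex, so Theorem~\ref{xxthm3.10}(1) applies over $S$ to $W=T$ and yields $\CMreg_{\xi}^{S}(T)=\Torreg_{\xi}^{S}(T)+\CMregxi(S)$, where the superscript $S$ indicates that the invariant is computed in $\D^{\bb}_{\fg}(S\Gr)$. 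The standard equivalence of the $\fm_S$-adic and $\fm_T$-adic torsion functors on $T$-modules (valid for any finite map) gives $\R\Gamma_{\fm_S}(T)\cong\R\Gamma_{\fm_T}(T)$, whence $\CMreg_{\xi}^{S}(T)=\CMregxi(T)$; moreover $\Torreg_{\xi}^{S}(T)\ge0$ because $\Tor^S_0(\Bbbk,T)=T/\fm_S T$ is nonzero in degree $0$. Combining these facts, $\CMregxi(T)\ge\CMregxi(S)$, so $c_\xi(T)\ge-\CMregxi(T)$ and therefore $c_\xi(T)=-\CMregxi(T)$.

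The step requiring the most care is the reduction in part~(2) to Theorem~\ref{xxthm3.10}: one must verify that a noetherian AS regular algebra $S$ genuinely satisfies the standing Hypothesis~\ref{xxhyp1.3}, so that Theorem~\ref{xxthm3.10} may be invoked over $S$, and that the local cohomology of $T$ computed over $S$ coincides with that computed over $T$, so that the quantity produced by Theorem~\ref{xxthm3.10}(1) is the genuine $\CMregxi(T)$ and not merely an $S$-relative surrogate. The remaining arguments are routine manipulations of the defining infima together with the type computation in Example~\ref{xxex2.2}(2).
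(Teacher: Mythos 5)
Your proof is correct and takes essentially the same approach as the paper: for part (1) you realize both inequalities by the same explicit finite maps ($G[t]\twoheadrightarrow G$ and $S[t]\to G[t]$) and the same type computation for $S[t]$, and for part (2) you apply Theorem~\ref{xxthm3.10}(1) over $S$ to $W=T$ with $\Torreg^S_\xi(T)\ge 0$ exactly as the paper does. Your additional care in justifying $\CMreg^S_\xi(T)=\CMregxi(T)$ via the agreement of $\R\Gamma_{\fm_S}$ and $\R\Gamma_{\fm_T}$ on $T$-modules makes explicit an identification the paper uses tacitly, but it is the same argument.
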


\begin{proof}
(1) There is a finite map $G[t] \to G$ given by sending $t$ 
to $0$. Hence, taking $\mathcal{P} = -\CMregxi$ in 
\cite[Proposition 4.1(1)]{KWZ2}, we have 
$c_{\xi}(G[t])\geq c_{\xi}(G)$.

Fix a real number $\epsilon >0$. By definition of $c_{\xi}(G)$, 
there is a noetherian AS regular algebra $T$ of type $(d,\bfl)$ 
and a finite map $\phi: T\to G$ such that 
$-\CMregxi(T)\leq c_{\xi}(G)+\epsilon$. Then $T[t]\to G[t]$ is 
a finite map. Hence,
$$\begin{aligned}
c_{\xi}(G[t])&\leq -\CMregxi(T[t])=-(\xi(d+1)-(\bfl+\deg t))\\
&=-(\xi d-\bfl) -(\xi-\deg t) \leq -(\xi d-\bfl)\\
&=-\CMregxi(T)\leq c_{\xi}(G)+\epsilon.
\end{aligned}
$$
Since $\epsilon$ was arbitrary, we obtain that $c_{\xi}(G[t])
\leq c_{\xi}(G)$. Combined with the previous paragraph, we 
conclude that $c_{\xi}(G[t])= c_{\xi}(G)$.

(2) Fix a noetherian AS regular algebra $T$. Recall that 
$0\leq \xi \leq 1$. Suppose $S$ is any noetherian AS regular 
algebra and suppose $S\to T$ is a finite map. By Theorem 
\ref{xxthm0.4} and the fact that $\Torregxi({}_{S} T)\geq 0$, 
we obtain that
\[\CMregxi(S) = 
\CMregxi({}_S T) - \Torregxi({}_S T) \leq \CMregxi(T)\]
and hence, $- \CMregxi(S) \geq -\CMregxi(T)$. Therefore, 
$c_{\xi}(T) \geq -\CMregxi(T)$. By definition, it is clear that 
$c_{\xi}(T) \leq - \CMregxi(T)$, and so we have equality, as 
desired.
\end{proof}

\subsection{Rate (or rate of growth of homology)}
\label{xxsec5.2} 
We first recall the notion of the rate of growth of homology that 
was introduced by Backelin \cite[p.81]{Ba}.

\begin{definition}
\label{xxdef5.4}
Let $A$ be a connected graded algebra.
The {\it rate of the homology} of $A$ is defined to be
$$\rate(A):=\max\left\{1, \sup_{i\geq 2}
   \{(t^i(_A\Bbbk)-1)/(i-1)\} \right\}.$$
\end{definition}

It is clear that $\rate(A) \geq 1$. If $A$ is a commutative 
finitely generated connected graded algebra, then by 
Corollary~\ref{xxcor5.9}, $\rate(A)$ is finite. We show that 
$\rate(A)$ is finite if and only if $\Torregxi(\Bbbk)$ is 
finite for some $\xi$, and in Proposition~\ref{xxpro5.8} we 
provide a sufficient condition, for the finiteness of 
$\Torregxi(\Bbbk)$ that includes the case when $A$ is commutative.

\begin{question}
\label{xxque5.5}
Suppose that $A$ is connected graded and noetherian (not 
necessarily generated in degree 1).
\begin{enumerate}
\item[(1)]
Is $\rate(A)$ always finite?
\item[(2)]
If the answer to part (1) is no, is there a natural condition 
on $A$ that guarantees the finiteness of $\rate(A)$?
\item[(3)]
If $A$ has a balanced dualizing complex, is $\rate(A)$ finite?
\end{enumerate}
\end{question}

The finiteness of $\rate(A)$ is particularly interesting due to 
a result of Backelin, which we now describe. Suppose that $A$ is 
generated in degree 1. Then $A$ is Koszul if and only if 
$\rate(A)=1$. Backelin proved that the finiteness of $\rate(A)$ 
is related to the Koszul property of the Veronese subrings of 
$A$. Let $d \geq 2$ be an integer and define the 
\emph{$d$th Veronese subring of $A$}:
$$
A^{(d)}:=\bigoplus_{i\geq 0} A_{di}.
$$
In this setting, we regrade so that elements in $A_{di}$ have 
degree $i$.

\begin{theorem}[{\cite[Corollary, p.81]{Ba}}]
\label{xxthm5.6}
Let $A$ be a connected graded algebra generated in degree 1. 
If $d \geq \rate(A)$, then $A^{(d)}$ is Koszul.
\end{theorem}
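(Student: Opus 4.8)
The plan is to deduce the Koszulity of $B:=A^{(d)}$ by producing a (generally non-minimal) graded $B$-free resolution of ${}_B\Bbbk$ whose $n$th term is generated in internal degrees $\leq n$. Since $A$ is generated in degree $1$ we have $A_{dn}=(A_1)^{dn}=((A_1)^{d})^{n}=(A_d)^{n}$, so $B$ is also generated in degree $1$; hence $\Tor^B_i(\Bbbk,\Bbbk)_j=0$ for $j<i$ automatically, and $B$ is Koszul if and only if $\Tor^B_i(\Bbbk,\Bbbk)_j=0$ for all $j>i$, that is, $\Torreg({}_B\Bbbk)=0$. It therefore suffices to build the linear $B$-resolution just described. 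Throughout I use that $d\geq\rate(A)$ forces $t^i({}_A\Bbbk)\leq (i-1)d+1$ for $i\geq 2$ (and $t^0({}_A\Bbbk)=0$, $t^1({}_A\Bbbk)=1$), so that by the elementary identity $\lceil((i-1)d+1)/d\rceil=i$ one gets $\lceil t^i({}_A\Bbbk)/d\rceil\leq i$ for every $i\geq 0$.

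Next I would restrict a minimal graded $A$-free resolution $P_\bullet\to{}_A\Bbbk$ to $B$ and keep track of internal degrees. Write $P_i=\bigoplus_j A(-a_{ij})$ with $a_{ij}\leq t^i({}_A\Bbbk)$. For $0\leq e\leq d-1$ set ${}_{(e)}A:=\bigoplus_{n\geq 0}A_{dn+e}$, regraded so that $A_{dn+e}$ has degree $n$; this is a graded $B$-module generated in degree $0$, namely the $d$th Veronese module of the free $A$-module $A(e)$. A short computation gives $A(-a)|_B\cong\bigoplus_{e=0}^{d-1}{}_{(e)}A(-n_e)$ with each $n_e\leq\lceil a/d\rceil$, so as a $B$-module $P_i|_B$ is a direct sum of shifts ${}_{(e)}A(-c)$ with $c\leq\lceil t^i({}_A\Bbbk)/d\rceil\leq i$ (and $c=0$ when $i=0$).

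The crux --- and, I expect, the main obstacle --- is the following claim, which is the real content of Backelin's argument: \emph{if $d\geq\rate(A)$ then each ${}_{(e)}A$ admits a linear $B$-free resolution}, i.e. $\Tor^B_i(\Bbbk,{}_{(e)}A)$ is concentrated in internal degree $i$ for all $i$. I would establish this by an induction on $e$ (the case $e=0$ being trivial since ${}_{(0)}A=B$), using the short exact sequences of $B$-modules relating consecutive ${}_{(e)}A$ that arise from the degree-$1$ generation of $A$, together with the rate bound $t^i({}_A\Bbbk)\leq(i-1)d+1$. The delicate point is the simultaneous bookkeeping of homological and internal degrees across the Veronese regrading: one must verify that the additive constant $1$ in the rate inequality is exactly absorbed by a ceiling at each stage and does not accumulate along the resolution.

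Granting the claim, the proof finishes as follows. Choose linear $B$-free resolutions $Q^{(e)}_\bullet\to{}_{(e)}A$, so that $Q^{(e)}_k$ is generated in degrees $\leq k$. Replacing each summand of $P_i|_B$ by the corresponding shift of $Q^{(e)}_\bullet$ and totalizing produces a graded $B$-free resolution $R_\bullet\to{}_B\Bbbk$ with $R_n=\bigoplus_{i+k=n}(\text{the shifts of }Q^{(e)}_k\text{ coming from }P_i)$; by the degree bounds above, a summand coming from $P_i$ with $i\geq 1$ is generated in degrees $\leq c+k\leq i+k=n$, while the $i=0$ contribution, which resolves $A|_B$, is generated in degrees $\leq k=n$. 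Hence $t^n({}_B\Bbbk)\leq n$, and combined with the automatic inequality $t^n({}_B\Bbbk)\geq n$ this gives $\Torreg({}_B\Bbbk)=0$, i.e. $A^{(d)}$ is Koszul. One can also recast the whole argument in the weighted language of this paper: finiteness of $\rate(A)$ is equivalent to $\Torreg_{(1,\xi)}({}_A\Bbbk)<\infty$ for $\xi\gg 0$ (compare Proposition \ref{xxpro5.8}(1)), and passing to the $d$th Veronese divides internal degrees by $d$ while essentially preserving homological degrees, turning a $(1,d)$-weighted linear bound over $A$ into an ordinary linear bound over $A^{(d)}$.
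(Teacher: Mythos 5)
The paper does not actually prove Theorem~\ref{xxthm5.6}; it is imported verbatim from Backelin \cite[Corollary, p.~81]{Ba} and used as a black box. So there is no internal proof against which to compare your argument, and the question is simply whether your sketch would constitute a proof on its own.

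The scaffolding you set up is sound. Writing $B=A^{(d)}$, the observation that $B$ is generated in degree~1, so that it suffices to show $t^n_B(\Bbbk)\leq n$, is correct. The decomposition $A(-a)|_B\cong\bigoplus_{e'}{}_{(e)}A(-c)$ with $c\leq\lceil a/d\rceil$ is correct, the rate bound $t^i({}_A\Bbbk)\leq(i-1)d+1$ for $i\geq 2$ does give $\lceil t^i({}_A\Bbbk)/d\rceil\leq i$, and the ``replace each $P_i|_B$ by a linear $B$-free resolution of it and totalize'' step is legitimate once one invokes a Cartan--Eilenberg resolution of the acyclic $B$-complex $P_\bullet|_B$ (one must check the lifted differentials preserve the degree bound, but that is routine). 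The internal-degree bookkeeping at the end, $c+k\leq i+k=n$, is right.

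The genuine gap is exactly where you flag it: the claim that $d\geq\rate(A)$ forces each Veronese module ${}_{(e)}A$ to be a Koszul $B$-module (i.e.\ have a linear $B$-free resolution). That claim \emph{is} Backelin's theorem; everything else in your write-up is a soft reduction to it. Your proposed ``induction on $e$, using the short exact sequences from degree-$1$ generation'' will not go through in any obvious way: the relevant exact sequences are of the form $0\to K_e\to A_1\otimes_\Bbbk{}_{(e)}A\to{}_{(e+1)}A\to 0$, and one has no a priori control over the kernel $K_e$ as a $B$-module, so the long exact $\Tor$-sequence does not close up the induction. Backelin's actual argument does not proceed by such an induction; it analyzes the bar resolution and rests on a distributivity/lattice-theoretic argument that uses the rate hypothesis directly, which is substantially more delicate than the bookkeeping in your outline and is the part you would need to supply. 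As written, the proposal reduces the theorem to an unproved claim of comparable difficulty.

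One small caveat in your last paragraph: the heuristic that ``passing to the $d$th Veronese divides internal degrees by $d$ while preserving homological degrees, turning a $(1,d)$-weighted linear bound over $A$ into an ordinary linear bound over $A^{(d)}$'' is suggestive but is not a proof and should not be leaned on; the interaction between $\Tor$ over $A$ and $\Tor$ over $A^{(d)}$ is mediated precisely by the resolutions of the ${}_{(e)}A$, which is again the missing claim.
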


Earlier, in the context of commutative algebra (and algebraic 
geometry) Mumford proved that if $A$ is a connected graded 
finitely generated commutative algebra, then the Veronese 
subring $A^{(d)}$ is Koszul for $d\gg 0$, see 
\cite[lemma, p. 282]{Mu1} and \cite[Theorem 1]{Mu2}.
Hence, Backelin's result extends Mumford's result to the 
noncommutative setting, with the additional assumptions that 
$A$ is generated in degree $1$ and rate$(A)$ is finite. In 
general, Mumford's result fails in the noncommutative 
setting---see \cite[Corollary 3.2]{StaZ}, an example that is 
not generated in degree 1. In Example \ref{xxex5.11} we show 
for this algebra $A$ and for $\xi \geq 3, \Torregxi(X)<\infty$ 
for all $X\in \D^{\bb}_{\fg}(A\Gr)$. 

\begin{lemma}
\label{xxlem5.7}
Suppose $A$ is connected graded and noetherian. 
\begin{enumerate}
\item[(1)]
Suppose $r:=\rate(A)$ is finite 
Then
$$\Torreg_{(1,r)} (\Bbbk)\leq \max\{0, 1-r, t_1^A(\Bbbk)-r\}.$$
\item[(2)]
Suppose that $a:=\Torreg_{\xi}(\Bbbk)$ is finite for some $\xi$. 
Let $a'=\max\{a, 1-\xi\}$.
Then $\rate(A)\leq \max\{1,a'+2\xi-1\}$.
 \item[(3)] Therefore  $\rate(A)$ is finite if and 
only if $\Torreg_{\xi}(\Bbbk)$ is finite for some $\xi$.
\end{enumerate}
\end{lemma}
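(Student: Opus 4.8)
The plan is to prove the three parts of Lemma~\ref{xxlem5.7} in order, since part (3) is an immediate formal consequence of parts (1) and (2): if $\rate(A)$ is finite, then (1) gives a weight $\xi = (1, r)$ with $\Torreg_{\xi}(\Bbbk) < \infty$; conversely, if $\Torreg_{\xi}(\Bbbk)$ is finite for some $\xi$, then (2) bounds $\rate(A)$. So the work is entirely in (1) and (2), each of which is a routine unwinding of definitions.

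For part (1): write $r = \rate(A)$, which by Definition~\ref{xxdef5.4} satisfies $r \geq 1$ and $(t^i_A(\Bbbk) - 1)/(i-1) \leq r$ for all $i \geq 2$, i.e. $t^i_A(\Bbbk) \leq r(i-1) + 1 = ri - r + 1$. First I would recall from Definition~\ref{xxdef2.3} (with $\xi_0 = 1$, $\xi_1 = r$) that
\[
\Torreg_{(1,r)}(\Bbbk) = \sup_{i \geq 0}\{ t^i_A(\Bbbk) - ri \}.
\]
For $i = 0$ the term is $0$; for $i = 1$ it is $t^1_A(\Bbbk) - r$; and for $i \geq 2$, using the rate bound, $t^i_A(\Bbbk) - ri \leq (ri - r + 1) - ri = 1 - r \leq 0$. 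Taking the supremum over all three ranges gives exactly $\max\{0, \, t^1_A(\Bbbk) - r, \, 1 - r\}$, which is the claimed bound. (One should note $t^0_A(\Bbbk) = 0$ and that all terms are finite since $t^i_A(\Bbbk)$ is finite for each $i$ and the bound controls the tail.)

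For part (2): suppose $a := \Torreg_{\xi}(\Bbbk)$ is finite for $\xi = (1, \xi)$ (abusing notation as the paper does). By Definition~\ref{xxdef2.3}, $t^i_A(\Bbbk) - \xi i \leq a$ for all $i$, hence $t^i_A(\Bbbk) \leq a + \xi i$. Also, since $F$ is a minimal resolution, $t^i_A(\Bbbk) \geq i$ for all $i$ with $F_i \neq 0$ (and in particular $t^1_A(\Bbbk) \geq 1$), which is why the corrected quantity $a' = \max\{a, 1 - \xi\}$ is introduced: it ensures $t^1_A(\Bbbk) - \xi \leq a'$ as well. Now for $i \geq 2$,
\[
\frac{t^i_A(\Bbbk) - 1}{i - 1} \leq \frac{a + \xi i - 1}{i - 1} = \xi + \frac{a + \xi - 1}{i - 1} \leq \xi + (a + \xi - 1) = a + 2\xi - 1 \leq a' + 2\xi - 1,
\]
where the second inequality uses $i - 1 \geq 1$ together with $a + \xi - 1 \leq a' + \xi - 1$; if $a + \xi - 1 < 0$ the bound still holds since the fraction is then at most $\xi \leq a' + 2\xi - 1$ (as $a' \geq 1 - \xi$ forces $a' + \xi - 1 \geq 0$). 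Taking the supremum over $i \geq 2$ and combining with the floor of $1$ from the definition of $\rate$ gives $\rate(A) \leq \max\{1, a' + 2\xi - 1\}$.

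The main obstacle is not conceptual but bookkeeping: one must be careful that the suprema defining $\Torreg_{\xi}(\Bbbk)$ range over $i \geq 0$ (with the sign convention $t^i_A(\Bbbk) - \xi_1 i$ in the version used here, per Definition~\ref{xxdef2.3}), and that the edge cases $i = 0, 1$ are handled separately from the tail $i \geq 2$ where the asymptotic inequality kicks in. The role of $a'$ versus $a$ must be tracked precisely, since $a$ alone need not dominate the $i = 1$ contribution to the rate; replacing $a$ by $a' = \max\{a, 1-\xi\}$ is exactly what repairs this. Finiteness throughout is automatic from the hypotheses, so no convergence subtleties arise.
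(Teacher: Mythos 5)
Your proof is correct and follows essentially the same line as the paper's: unwind the definitions of $\rate$ and $\Torreg_\xi$ and compare the two suprema termwise, handling $i=0,1$ separately from the tail $i\ge 2$. The only cosmetic difference is in part (2), where the paper passes directly to the bound $t^A_i(\Bbbk)\le a'+i\xi$ (so that $a'+\xi-1\ge 0$ is built in and no case split on the sign of $a+\xi-1$ is needed), whereas you first bound by $a+2\xi-1$ and then patch the negative case; both give the same estimate.
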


\begin{proof}
(1) By definition, for all $i\geq 2$, $t_i^A(\Bbbk)-1\leq r(i-1)$. 
Then $t_i^A(\Bbbk)- r i\leq 1-r$. The assertion follows.

(2) By definition, $t^A_i(\Bbbk)\leq a+i \xi\leq a'+i\xi$ for 
all $i\geq 2$. Then 
$$\begin{aligned}
\sup_{i\geq 2} \{(t^A_i(\Bbbk)-1)/(i-1)\}
&\leq \sup_{i\geq 2} \{(a'+ i\xi-1)/(i-1)\}\\
&=\sup_{n\geq 1}
\{[(a'+ \xi-1)+n\xi]/n\}
=a'+2\xi-1.
\end{aligned}
$$

(3) This is an immediate consequence of parts (1) and (2).
The assertion follows.
\end{proof}

The next proposition provides a criterion for the finiteness of 
$\Torregxi(\Bbbk)$ for some $\xi$. 

\begin{proposition}
\label{xxpro5.8}
Let $A$ be a noetherian connected graded algebra 
and suppose that there is a finite map $\phi: T \to A$ 
where $T$ is a noetherian connected graded algebra of 
finite global dimension. Let 
\[
c:=\max\left \{ t^T_0({}_TA), 
\max_{1\leq s\leq \pdim( {}_TA )} 
\left\{ t^T_s(_TA)/s \right\} \right\}< \infty.
\]
\begin{enumerate}
\item[(1)]
Write $\xi=(1,\xi)$. If $\xi \geq c$, then 
$\Torregxi(_A\Bbbk)<\infty$. 
\item[(2)]
If further $T$ is AS regular and $\xi \geq c$, then 
$\Torregxi(X)<\infty$ for all $X\in \D^{\bb}_{\fg}(A\Gr)$.
\item[(3)]
Suppose $T$ is AS regular. Then $\ASregxi(A)<\infty$ 
for all $\xi\gg 0$.
\end{enumerate}
\end{proposition}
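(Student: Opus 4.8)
The plan is to prove part (1) directly and deduce parts (2) and (3) from it. For part (1) I would control the internal degrees of the syzygies of ${}_A\Bbbk$ over $A$ by passing, via restriction of scalars $A\Gr\to T\Gr$ along $\phi$, to the (finite) homological data of $\Bbbk$ and $A$ over $T$. Parts (2) and (3) then follow from part (1) together with Corollary~\ref{xxcor3.4}, the finiteness remarks after Definition~\ref{xxdef2.1}, and the fact that when $T$ is AS regular, $A$ again has a balanced dualizing complex.

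For part (1), the first step is the elementary observation that if $N$ is a finitely generated graded left $A$-module, then any set of homogeneous $T$-module generators of $N$ (via $\phi$) also $A$-generates $N$, whence $t^A_0({}_AN)\leq t^T_0({}_TN)$. Next, let $\cdots\to F_1\to F_0\to {}_A\Bbbk\to 0$ be the minimal graded free resolution over $A$, put $M_0=\Bbbk$, and set $M_{i+1}=\ker(F_i\to M_i)$, so that $0\to M_{i+1}\to F_i\to M_i\to 0$ and $t^A_i({}_A\Bbbk)=t^A_0({}_AM_i)$. Restricting these short exact sequences to $T$ and applying $\Bbbk\otimes^{L}_{T}(-)$ yields long exact sequences that force
\[
t^T_s({}_TM_{i+1})\;\leq\;\max\{\,t^T_{s+1}({}_TM_i),\ t^T_s({}_TF_i)\,\},
\]
while writing $F_i=\bigoplus_j A(-c_{i,j})$ with $c_{i,j}\leq t^A_0({}_AM_i)$ gives $t^T_s({}_TF_i)\leq t^A_0({}_AM_i)+t^T_s({}_TA)\leq t^T_0({}_TM_i)+t^T_s({}_TA)$. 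Setting $\tau_i:=\sup_{s\geq 0}\{t^T_s({}_TM_i)-cs\}$ and using $t^T_0({}_TA)\leq c$, $t^T_s({}_TA)\leq cs$ for $1\leq s\leq\pdim({}_TA)$, and $t^T_s({}_TA)=-\infty$ for $s>\pdim({}_TA)$, the two estimates above give $t^T_s({}_TM_{i+1})-cs\leq\tau_i+c$ for all $s\geq 0$, hence $\tau_{i+1}\leq\tau_i+c$. Since $T$ has finite global dimension, $\tau_0=\sup_s\{t^T_s({}_T\Bbbk)-cs\}$ is finite, so $\tau_i\leq\tau_0+ci$ and therefore $t^A_i({}_A\Bbbk)\leq t^T_0({}_TM_i)\leq\tau_i\leq\tau_0+ci$ for every $i$. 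Consequently, when $\xi\geq c$,
\[
\Torregxi({}_A\Bbbk)=\sup_{i\geq 0}\{t^A_i({}_A\Bbbk)-\xi i\}\leq\sup_{i\geq 0}\{(c-\xi)i+\tau_0\}=\tau_0<\infty .
\]

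For parts (2) and (3) the extra input is that an AS regular algebra $T$ has a balanced dualizing complex, and that this property descends along the finite map $\phi\colon T\to A$, so that $A$ also has one (cf.\ \cite{VdB, Ye}). Granting this, part (2) is immediate: by part (1), $\Torregxi({}_A\Bbbk)<\infty$ for $\xi\geq c$, and then Corollary~\ref{xxcor3.4} (with $\xi_0=1>0$) yields $\Torregxi(X)<\infty$ for every nonzero $X\in\D^{\bb}_{\fg}(A\Gr)$. For part (3), fix any $\xi\geq c$: then $\Torregxi(\Bbbk)<\infty$ by part (1), while $\CMregxi(A)$ is finite by the remarks following Definition~\ref{xxdef2.1} (since $A$ has a balanced dualizing complex and $\xi_0=1\geq 0$); hence $\ASregxi(A)=\Torregxi(\Bbbk)+\CMregxi(A)<\infty$. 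As $c<\infty$ is fixed, this holds for all $\xi\gg 0$.

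The step I expect to demand the most care is the recursion in part (1): one must choose $c$ large enough that the bound $t^T_s({}_TF_i)-cs\leq\tau_i+c$ holds uniformly in $s$, treating $s=0$ separately (there $t^T_0({}_TA)$, rather than $cs$, enters), and one must check that the inequality $t^A_0({}_AM_i)\leq t^T_0({}_TM_i)$ may legitimately be fed back into the estimate for $t^T_s({}_TF_i)$, and that $\tau_i$ stays finite along the induction. A smaller point is to cite the correct statement for descent of balanced dualizing complexes along $\phi$, which is precisely what makes Corollary~\ref{xxcor3.4} and the finiteness of $\CMregxi(A)$ available over $A$.
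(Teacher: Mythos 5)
Your proposal is correct, and parts (2) and (3) match the paper's argument (which cites Theorem~\ref{xxthm0.3}(1), the content of Corollary~\ref{xxcor3.4}). Part (1), however, takes a genuinely different route. The paper restricts scalars the same way you do, but runs the change-of-rings spectral sequence
\[
E^2_{p,q}=\Tor^A_{p}\bigl(\Tor^T_q(\Bbbk_T,A),{}_A\Bbbk\bigr)\Longrightarrow \Tor^T_{p+q}(\Bbbk_T,{}_T\Bbbk),
\]
sets $d:=\max_{0\leq s\leq\gldim T}\{t^A_s(\Bbbk)-cs\}$, bounds $\deg E^r_{j-r,r-1}\leq d+cj$ inductively, and uses the vanishing $E^\infty_{j,0}=\Tor^T_j(\Bbbk,\Bbbk)=0$ for $j>\gldim T$ to conclude that $E^2_{j,0}$ is bounded in degree by $\max_r\{\deg E^r_{j-r,r-1}\}$. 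Your argument unrolls essentially the same information at the level of syzygies: setting $\tau_i=\sup_s\{t^T_s({}_TM_i)-cs\}$, the long exact sequence of $\Tor^T$ applied to $0\to M_{i+1}\to F_i\to M_i\to 0$, together with $t^A_0({}_AM_i)\leq t^T_0({}_TM_i)$ and the bound $t^T_s({}_TF_i)\leq t^T_0({}_TM_i)+t^T_s({}_TA)$, yields $\tau_{i+1}\leq\tau_i+c$, whence $t^A_i(\Bbbk)\leq t^T_0({}_TM_i)\leq\tau_i\leq\tau_0+ci$. I verified the uniformity in $s$ (the $s=0$ case uses $t^T_0({}_TA)\leq c$ rather than $t^T_s({}_TA)\leq cs$), the finiteness of $\tau_0$ (finite global dimension of $T$ truncates the supremum), and the observation $t^A_0({}_AN)\leq t^T_0({}_TN)$ via $\phi(\fm_T)N\subseteq\fm_AN$; all are sound. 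Your approach is more elementary: it avoids the spectral sequence machinery entirely and does not rely on any identification between $\Tor^A_j(\Bbbk\otimes_T A,\Bbbk)$ and $\Tor^A_j(\Bbbk,\Bbbk)$, which is the one delicate point in the spectral-sequence argument. The trade-off is that the recursion produces a slightly different (possibly weaker) explicit constant $\tau_0$ in place of the paper's $d$, but this is irrelevant for the stated finiteness conclusion.
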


\begin{proof} (1) It is enough to show the assertion for 
$\xi=(1,c)$. By definition of $c$, we have
$$t^T_0(_TA)\leq c 
{\text{ and }} 
t^T_{s}(_TA)\leq cs \qquad {\text{for all $s\geq 1$}}.$$ 
Let 
\[d :=\max_{0\leq s\leq \gldim T} 
\{t^A_s(\Bbbk)- cs\},\]
which is clearly finite. We claim that 
$t^A_j(\Bbbk)-c j\leq d$ for all $j \geq 0$, which is equivalent to 
the main assertion. We prove this by induction. By the definition 
of $d$, the claim holds for all $0 \leq j \leq \gldim T$.

Now assume that $j > \gldim T$. We will use the change of rings 
spectral sequence given in \cite[Theorem 10.60]{Ro}, namely:
$$E^2_{p,q}:=\Tor^A_{p}\left(\Tor^T_q(\Bbbk_T,A),{_AM}\right)
\Longrightarrow \Tor^T_{p+q}(\Bbbk_T, {_TM}).$$
Letting $M = \Bbbk$ in this spectral sequence and assuming the
induction hypothesis that
$$t^A_s(\Bbbk)-c s\leq d$$ 
for all $s \leq j-1$, we see that for $r \geq 2$,
$$\begin{aligned}
\deg E^{r}_{j-r,r-1} & \leq \deg E^{2}_{j-r, r-1} 
\leq \max_{p\leq j-1}\{ t^A_{p}(\Bbbk)+ t^T_{j-1-p}(_TA)\}\\
&\leq \max\{ t^A_{j-1}(\Bbbk)+ t^T_{0}(_TA), 
\max_{0 \leq p\leq j-1}\{ t^A_{p}(\Bbbk)+ t^T_{j-1-p}(_TA)\}\}\\
&\leq \max\{c(j-1)+d+c, \max_{0 \leq p\leq j-1}
\{(c p+d)+(j-1-p) c\}\}\\
&=d +cj.
\end{aligned}
$$
Note that the incoming differentials to $E^r_{j,0}$,
for $r\geq 2$, are all zero, and the outgoing differentials
from $E^r_{j,0}$ land at $E^r_{j-r,r-1}$ with
$\deg E^r_{j-r,r-1}\leq d +cj$. When $j>\gldim T$, 
$E^{\infty}_{j,0}=\Tor^T_j(\Bbbk,\Bbbk)=0$. Hence $E^2_{j,0}$ 
has a filtration such that each subfactor is a submodule of 
some $E^r_{j-r,r-1}$ where $2\leq r\leq j$. Therefore
$$\begin{aligned}
t^A_j(\Bbbk)
&=\deg \Tor^A_j(\Bbbk, \Bbbk)=\deg \Tor^A_j(\Bbbk\otimes_T A, 
\Bbbk)\\
&=\deg E^2_{j,0} \leq \max_{r\geq 2}\{\deg E^r_{j-r,r-1}\}\\
&\leq d +cj.
\end{aligned}
$$
This finishes the inductive step and the proof of the main assertion.

(2) Since $T$ has a balanced dualizing complex, so does $A$, via 
the finite map $\phi:T \to A$. The assertion follows from part 
(1) and Theorem \ref{xxthm0.3}(1).

(3) The assertion follows from part (2) and 
the definition of $\ASregxi(A)$.
\end{proof}

The above proposition shows that if $A$ is finitely generated 
and commutative then there exists a weight $\xi$ such that 
$\Torregxi({}_A \Bbbk)$ is finite; hence by Lemma \ref{xxlem5.7}, 
$\rate(A)$ is finite, as noted earlier.

\begin{corollary}
\label{xxcor5.9}
Let $A$ be a noetherian connected graded algebra generated in 
degree 1 and suppose there is a finite map $T \to A$ where $T$ 
is a noetherian connected graded algebra of finite global 
dimension. Then $\rate(A)$ is finite and hence $A^{(d)}$ is 
Koszul for $d\gg 0$.
\end{corollary}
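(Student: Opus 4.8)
The plan is to assemble Corollary \ref{xxcor5.9} directly from the tools already in place: Proposition \ref{xxpro5.8}, Lemma \ref{xxlem5.7}, and Backelin's theorem (Theorem \ref{xxthm5.6}). First I would invoke Proposition \ref{xxpro5.8}(1): since there is a finite map $\phi\colon T\to A$ with $T$ noetherian connected graded of finite global dimension, the constant
\[
c:=\max\left\{t^T_0({}_TA),\ \max_{1\le s\le \pdim({}_TA)}\{t^T_s({}_TA)/s\}\right\}
\]
is finite (note $\pdim({}_TA)\le\gldim T<\infty$, so the max is over a finite set), and hence $\Torreg_{(1,\xi)}({}_A\Bbbk)<\infty$ for every $\xi\ge c$. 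In particular $\Torreg_{(1,c)}({}_A\Bbbk)$ is finite, so $\Torreg_\xi(\Bbbk)$ is finite for some weight $\xi$.

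Next I would feed this into Lemma \ref{xxlem5.7}: part (3) of that lemma says exactly that $\rate(A)$ is finite if and only if $\Torreg_\xi(\Bbbk)$ is finite for some $\xi$, and we have just produced such a $\xi$. (Concretely, Lemma \ref{xxlem5.7}(2) even gives an explicit bound $\rate(A)\le\max\{1,\,a'+2\xi-1\}$ where $a=\Torreg_{(1,\xi)}(\Bbbk)$ and $a'=\max\{a,1-\xi\}$, but for the statement we only need finiteness.) Thus $\rate(A)<\infty$.

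Finally, since $A$ is assumed to be generated in degree $1$ and $\rate(A)$ is a finite real number, choose any integer $d\ge\rate(A)$; then Backelin's Theorem \ref{xxthm5.6} gives that the Veronese subring $A^{(d)}$ is Koszul. Since this holds for all $d\ge\rate(A)$, we conclude $A^{(d)}$ is Koszul for $d\gg 0$, which is the assertion.

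There is essentially no obstacle here: the corollary is a routine concatenation of Proposition \ref{xxpro5.8}(1), Lemma \ref{xxlem5.7}(3), and Theorem \ref{xxthm5.6}, and the only points requiring a word of care are (a) checking that the constant $c$ in Proposition \ref{xxpro5.8} is genuinely finite, which follows because $T$ has finite global dimension so ${}_TA$ has finite projective dimension and only finitely many nonzero Tor modules, and (b) recording that the "generated in degree $1$" hypothesis is precisely what is needed to apply Backelin's theorem. Everything else is immediate.
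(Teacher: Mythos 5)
Your proof is correct and follows exactly the same route as the paper: Proposition \ref{xxpro5.8}(1) gives a weight $\xi$ with $\Torreg_{\xi}(\Bbbk)<\infty$, Lemma \ref{xxlem5.7}(3) converts that to finiteness of $\rate(A)$, and Backelin's Theorem \ref{xxthm5.6} (using the degree-$1$ hypothesis) yields Koszulity of $A^{(d)}$ for $d\gg 0$. The paper states this in one line; you simply spell out the details, including the easy check that the constant $c$ is finite.
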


\begin{proof} The assertion follows from 
Proposition \ref{xxpro5.8}, Lemma \ref{xxlem5.7}, 
and Theorem \ref{xxthm5.6}.
\end{proof}

When $A$ is commutative and finitely generated, then there is 
a surjective map from a polynomial ring to $A$. So 
Corollary \ref{xxcor5.9} recovers Mumford's result 
\cite[Theorem 1]{Mu2}. This motivates the following questions
that are related to Question \ref{xxque5.5}.

\begin{question}
\label{xxque5.10}
Let $A$ be a noetherian connected graded algebra.
\begin{enumerate}
\item[(1)] 
Suppose $A$ is generated in degree $1$. Is then $A^{(d)}$ 
Koszul for $d\gg 0$? 
\end{enumerate}
If the answer to part (1) is no, we further ask
\begin{enumerate}
\item[(2)] 
Suppose $A$ is generated in degree $1$. Is there a natural 
homological condition such that $A^{(d)}$ is Koszul for 
$d\gg 0$?  For example, if $A$ has a balanced dualizing complex 
is $A^{(d)}$ Koszul for $d\gg 0$?
\item[(3)]
Suppose further that $A$ is PI. Is then $A^{(d)}$ Koszul for 
$d\gg 0$? 
\end{enumerate}
If $A$ is not generated in degree $1$, then it is not necessary 
that $A^{(d)}$ is Koszul for $d \gg 0$. We can therefore ask 
part (2) in this setting, namely:
\begin{enumerate}
\item[(4)] 
If $A$ is not necessarily generated in degree $1$, is there a 
natural homological condition which guarantees that $A^{(d)}$ 
is Koszul for $d \gg 0$?
\end{enumerate}
\end{question}

The next example shows that the hypothesis in Proposition 
\ref{xxpro5.8}(2) is sufficient, but not necessary.

\begin{example}
\label{xxex5.11}
Assume that $\Bbbk={\mathbb C}$. Let $U$ be the algebra 
$\Bbbk\langle x,y\rangle/(yx-xy-x^2)$ and let $R=\Bbbk+Uy$. 
The algebra $U$ is noetherian AS regular of global dimension 
two and $R$
is noetherian and generated by $y$ and $xy$. 
It follows from \cite[Theorem 2.3]{StaZ} that there is no 
finite map from a noetherian AS regular algebra $T$ to $R$.

We claim that if $\xi_1\geq 3$ (and write $\xi=(1,\xi_1)$), then 
$\Torregxi(X)<\infty$ for all $X\in \D^{\bb}_{\fg}(R\Gr)$. We 
give a sketch of the proof below.

\medskip

\noindent {\sf Claim 1: Consider $U$ as a left graded $R$-module. 
Then $_RU$ is finitely generated and $\Torregxi(U)<\infty$.}

\medskip

\noindent
{\it Proof of Claim 1:} By \cite[(2.3.1)]{StaZ}, we have a 
short exact sequence
$$0\to Uhx\to R\oplus Rx \to U\to 0$$
where $h=(y^2-2xy)$. Using this we obtain the following
minimal free resolution of the $R$-module $_RU$:
$$\cdots
\to R(-9)\oplus R(-10)
\to R(-6)\oplus R(-7)
\to R(-3)\oplus R(-4)
\to R\oplus R(-1) \to U\to 0$$
which implies that $t^i(_RU)=3i+1$ for all $i\geq 0$ and 
$$\Torregxi(_RU)=
\begin{cases}
\infty & \xi_1<3,\\
1 & \xi_1\geq 3.
\end{cases}
$$
In particular, when $\xi_1\geq 3$, $\Torregxi(_RU)<\infty$.

\medskip

\noindent 
{\sf Claim 2: Suppose $\xi_1\geq 3$. If $M$ is a finitely 
generated graded left $U$-module, then $\Torregxi(_RM)<\infty$.}

\medskip

\noindent
{\it Proof of Claim 2:} Since $U$ is AS regular,
there is a minimal free resolution
$$0\to P_2\to P_1\to P_0\to M\to 0.$$
By Claim 1, $\Torregxi(_RP_i)<\infty$ for $i=0,1,2$
(as we assume $\xi_1\geq 3$). By Lemma \ref{xxlem3.7}, 
$\Torregxi(_RM)<\infty$.

\medskip

\noindent {\sf Claim 3: Suppose $\xi_1\geq 3$. If $M$ is a finitely 
generated graded left $R$-module, then $\Torregxi(_RM)<\infty$.}

\medskip

\noindent
{\it Proof of Claim 3:} We use induction on the 
Krull dimension of $M$. 

\noindent \textbf{Case 1:} Suppose $M$ has Krull dimension 0.
Then $M$ is finite dimensional. If $M$ is 1-dimensional, 
then it is of the form $\Bbbk(n)$, which is an $U$-module, and
the assertion follows from Claim 2. If $\dim M>1$, the 
assertion follows from Lemma \ref{xxlem3.7} and the base case 
$\Bbbk(n)$. The minimal free resolution of the trivial module 
$_R\Bbbk$ (which was computed by Frank Moore):
$$
\cdots \to R(-(3n-1))\oplus R(-(3n-2))\to 
\cdots \to R(-2)\oplus R(-1)\to R\to \Bbbk\to 0.
$$
As a consequence
$$\Torregxi(_R\Bbbk)=
\begin{cases}
\infty & \xi<3,\\
0&\xi_1\geq 3.
\end{cases}
$$

\noindent 
\textbf{Case 2:} Assume $M$ has Krull dimension 1. By noetherian 
induction, we need to consider only  the case when $M$ is 
1-critical. First we recall that 1-critical $U$-modules
are graded shifts of $U/U(ax+by)$ (these are called 
point modules). Since ${\text{Proj}}\;R={\text{Proj}}\; U$,
every 1-critical $R$-module $M$ is a submodule of a 
1-critical $U$-module $N$ such that $N/M$ is finite 
dimensional. By Claim 2, Case 1, and Lemma \ref{xxlem3.7}, 
the assertion follows.

\noindent 
\textbf{Case 3:} Suppose $M$ has Krull dimension 2. By noetherian 
induction, we need to consider only the case when $M$ is 
2-critical. Then $M$ contains a submodule isomorphic to 
$R(n)$, and $M/R(n)$ has Krull dimension 1. So the assertion
follows from Case 2 and Lemma \ref{xxlem3.7}. Combining
these cases we finish the proof of Claim 3.

\medskip

\noindent
{\sf Claim 4: Suppose $\xi_1\geq 3$. If $X$ is in 
$\D^{\bb}_{\fg}(R\Gr)$, then $\Torregxi(X)<\infty$.}

\medskip

\noindent
{\it Proof of Claim 4:} Define the {\it amplitude} of 
a complex $X$ to be
$$\amp(X):=\sup(X)-\inf(X).$$ 
If $\amp(X)=0$, then $X$ is isomorphic to a complex
shift of a module. The claim follows from 
Lemma \ref{xxlem3.1}(2) and Claim 3. If $\amp(X)>0$,
then by truncation, there are complexes $Y$ and $Z$
with smaller amplitude than $\amp(X)$ such that
$$Y\to X\to Z\to Y[1]$$
is a distinguished triangle. The claim follows from 
induction and Lemma \ref{xxlem3.7}(1). 
\end{example}

The following proposition follows from the above example.

\begin{corollary}
\label{xxcor5.12} 
There is a noetherian connected graded algebra $A$ with finite
$\Torregxi(\Bbbk)$ for some $\xi$, but not generated in degree 1, 
such that the Veronese subring $A^{(d)}$ is not Koszul for every
$d\gg 0$.
\end{corollary}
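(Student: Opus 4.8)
The plan is to take $A := R$, the algebra $\Bbbk + Uy$ with $U = \Bbbk\langle x,y\rangle/(yx-xy-x^2)$ constructed in Example~\ref{xxex5.11}, and to verify each clause of the statement directly from facts already recorded there together with \cite[Corollary 3.2]{StaZ}. There are three things to check: that $A$ is noetherian connected graded but not generated in degree~$1$; that $\Torregxi(\Bbbk)$ is finite for a suitable $\xi$; and that $A^{(d)}$ fails to be Koszul for all large $d$.

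First I would recall from Example~\ref{xxex5.11} that $R$ is noetherian and connected graded, and that it is generated by $y$ in degree~$1$ and $xy$ in degree~$2$. Since $R_1=\Bbbk y$ is one-dimensional, the subspace $(R_1)^2 = \Bbbk y^2$ of $R_2$ does not contain $xy$, so $R$ is genuinely not generated in degree~$1$; this is exactly the point of referencing \cite[Corollary 3.2]{StaZ}, whose algebra is $R$ (cf. also \cite[Theorem 2.3]{StaZ}, invoked in Example~\ref{xxex5.11}). Next I would invoke Claim~3 of Example~\ref{xxex5.11}, or more specifically Case~1 of its proof, where the minimal free resolution of ${}_R\Bbbk$ (computed by F. Moore) shows $t^i({}_R\Bbbk) = 3i$-type growth and hence $\Torreg_{(1,\xi_1)}({}_R\Bbbk) = 0$ whenever $\xi_1 \geq 3$; one may even cite Claim~4, which gives $\Torregxi(X)<\infty$ for every $X\in\D^{\bb}_{\fg}(R\Gr)$ in this range of $\xi$. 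In particular $\Torregxi(\Bbbk)$ is finite for $\xi$ with $\xi_1 \geq 3$.

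Finally I would appeal to \cite[Corollary 3.2]{StaZ}, which establishes for precisely this $R$ that the Veronese subring $R^{(d)}$ is not Koszul for $d\gg 0$ — this is the standing counterexample, alluded to in the discussion preceding Example~\ref{xxex5.11}, to the noncommutative analogue of Mumford's theorem. Setting $A = R$ then yields all the asserted properties simultaneously.

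There is no real obstacle here: the content has all been produced in Example~\ref{xxex5.11} and in \cite{StaZ}, and the corollary is a packaging statement. The only points requiring care are (i) matching the normalization $\xi=(1,\xi_1)$ with the convention in force in this subsection, and (ii) pinpointing where in Example~\ref{xxex5.11} the finiteness of $\Torregxi(\Bbbk)$ is actually proved, so that the reference is unambiguous.
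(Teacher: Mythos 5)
Your proposal matches the paper's proof exactly: set $A=R$ from Example~\ref{xxex5.11}, invoke \cite[Corollary 3.2]{StaZ} for the failure of Koszulity of $A^{(d)}$ for $d\gg 0$, and read off the finiteness of $\Torregxi(\Bbbk)$ (indeed $\Torregxi(\Bbbk)=0$) for $\xi=(1,\xi_1)$ with $\xi_1\geq 3$ from the minimal resolution of ${}_R\Bbbk$ recorded in that example. The explicit check that $R$ is not generated in degree $1$ is a harmless addition that the paper leaves implicit.
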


\begin{proof}
Let $A$ be the algebra $R$ in Example \ref{xxex5.11}. By 
\cite[Corollary 3.2]{StaZ}, the Veronese subring $A^{(d)}$ is 
not Koszul for all $d\gg 0$. By Example \ref{xxex5.11},
$\Torregxi(\Bbbk)$ is finite for $\xi=(1,3)$.
\end{proof}

\subsection{Slope}
\label{xxsec5.3}
Another homological invariant that is related to the 
$\xi$-Tor-regularity is the \emph{slope} of a 
graded $A$-module $M$, which was introduced in \cite{ACI1} 
for finitely generated commutative algebras.

\begin{definition}\cite[p.197]{ACI1}
\label{xxdef5.13}
Let $M$ be a graded left $A$-module. The {\it slope} of $M$ is 
defined to be
$${\rm slope}~M := \sup_{i \ge 1} 
\frac{\deg (\Tor^A_i(\Bbbk, M)) - \deg (\Tor^A_0(\Bbbk, M))}{i}.$$
\end{definition}

Following \cite{Ba}, $\rate(A)=\slope_A(A_{\geq 1})$. 

When $A$ is a finitely generated  commutative connected graded 
algebra, then in \cite[Corollary 1.3]{ACI1} it is proved that 
for every finitely generated graded $A$-module $M$, the slope 
of $M$ is finite. The relationship between slope and $\CMreg$ 
and $\Torreg$ was  
discussed in \cite{ACI1}. Also see \cite{ACI2} 
(and the references therein) for the study of $t^A_i(M)$ in the 
commutative setting.

\begin{remark}
\label{xxrem5.14} 
Let $A$ be a connected graded noetherian algebra
and $M$ be a finitely generated graded left $A$-module. 
\begin{enumerate}
\item[(1)]
It is clear that the definition of the $\slope(M)$ makes sense 
in the noncommutative setting, too. 
\item[(2)]
If $s=s(M):=\slope(M)$ is finite, then 
$\Torreg_{s}(M)\leq t^A_0(M)$.
\item[(3)]
If $\Torregxi(M)$ is finite for some $\xi$, then
$\slope(M)\leq \xi+|\Torregxi(M)-t^A_0(M)|$.
\item[(4)]
Combining parts (2) and (3), $\slope(M)$ is finite 
if and only if $\Torregxi(M)$ is finite for some $\xi$.
\item[(5)]
In the setting of Proposition \ref{xxpro5.8}(2),
by part (4), $\slope(M)$ is always finite.
\item[(6)]
$c_{\xi}$ (in Definition \ref{xxdef5.2}), $\rate$, $\slope$, and 
$\Torregxi$ are useful in understanding various homological 
properties, which was demonstrated in \cite{ACI1, ACI2} in the 
commutative case.
\end{enumerate}
\end{remark}

We conclude the paper by asking a final question.
If $A$ is commutative, then by \cite{AP} if 
$\Torreg(\Bbbk) < \infty$, then $\Torreg(\Bbbk) = 0$.
Although this is not true in the noncommutative setting, 
we remark that if $A$ is commutative then $c(A) = 0$.

\begin{question}
\label{xxque5.15}
If $c(A)=0$ and $\Torreg(\Bbbk)<\infty$,  then is
$\Torreg(\Bbbk)=0$ (or equivalently, $A$ is  Koszul)?
\end{question}

\subsection*{Acknowledgments}
The authors thank the referee for his/her very careful reading 
and valuable comments and thank Frank Moore for useful 
conversations on the subject and computational assistance. 
R. Won was partially supported by an AMS--Simons Travel Grant and Simons Foundation grant \#961085.
J.J. Zhang was partially supported by the US National Science 
Foundation (Nos. DMS-1700825, DMS-2001015, and DMS-2302087).

\bibliographystyle{amsalpha}
\bibliography{biblio}

\end{document}